\newtheorem{thm}{Theorem}
\newtheorem*{thm*}{Theorem}
\newtheorem{prop}[thm]{Proposition}
\newtheorem{lemma}[thm]{Lemma}
\crefname{thm}{Theorem}{Theorems}
\crefname{lemma}{Lemma}{Lemmas}
\crefname{prop}{Proposition}{Propositions}
\crefname{cor}{Corollary}{Corollaries}
\crefname{section}{Section}{Sections}
\crefname{figure}{Figure}{Figures}
\crefname{question}{Question}{Questions}
\crefname{remark}{Remark}{Remarks}
\renewcommand{\P}{\mathbb P}
\newcommand{\N}{\mathbb N}
\newcommand{\Z}{\mathbb Z}
\newcommand{\thr}{r}
\newcommand{\floor}[1]{\left\lfloor#1\right\rfloor}
\newcommand{\ceil}[1]{\left\lceil#1\right\rceil}
\newcommand{\boxright}{\ensuremath{%
  \mathrel{\square\kern-1.5pt\raise1pt\hbox{$\mathord{\rightarrow}$}}}}
  \newcommand{\nonbrs}{\eta}
\newcommand{\note}[1]{{\textcolor{blue}
{\bf [#1\marginpar{\textcolor{red}{\bf $\bigstar$}}]}}}
\newcommand{\df}[1]{\textbf{#1}}
\title{Polluted Bootstrap Percolation in Three Dimensions}
\date{18 June 2017}
\author{Janko Gravner}
\address{Janko Gravner, Mathematics Department, University of California, Davis, CA 95616}
\email{gravner@math.ucdavis.edu}
\author{Alexander E.~Holroyd}
\address{Alexander E.~Holroyd,
Microsoft Research, Redmond, WA 98052}
\email{holroyd@microsoft.com}
\author{David Sivakoff}
\address{David Sivakoff, Departments of Statistics and Mathematics,
The Ohio State University,
Columbus, OH 43210}
\email{dsivakoff{@}stat.osu.edu}
\keywords{Bootstrap percolation; cellular automaton; critical scaling}
\subjclass[2010]{60K35; 82B43}
\begin{document}
\begin{abstract}
In the polluted bootstrap percolation model, vertices of the cubic lattice $\Z^3$ are independently declared initially occupied with probability $p$ or closed with probability $q$.  Under the standard (respectively, modified) bootstrap rule, a vertex becomes occupied at a subsequent step if it is not closed and it has at least $3$ occupied neighbors (respectively, an occupied neighbor in each coordinate). We study the final density of occupied vertices as $p,q\to 0$.  We show that
this density converges to $1$ if $q \ll p^3(\log p^{-1})^{-3}$ for both standard and modified rules.  Our principal result is a complementary
bound with a matching power for the modified model: there exists $C$ such that the final density converges to $0$ if $q > Cp^3$.  For the standard model, we establish convergence to $0$ under the stronger condition $q>Cp^2$.
\end{abstract}
\maketitle

\section{Introduction}


Bootstrap percolation is a fundamental cellular
automaton model for nucleation and growth from sparse
random initial seeds.  In this article we address how the
model is affected by the presence of pollution in the form
of sparse random permanent
obstacles.

Let $\Z^d$ be the set of $d$-vectors of integers, which we
call  \df{vertices} or \df{sites},
and let $p,q\in[0,1]$ be parameters. In the
\df{initial} (time zero) configuration, each vertex is chosen
to have exactly one of three possible states:
$$\begin{cases}
\begin{array}{ll}
  \text{\df{closed}}  & \text{with probability }q; \\
  \text{\df{open} and \df{initially occupied}} & \text{with probability }p; \\
  \text{\df{open} but not initially occupied} & \text{with probability }1-p-q.
\end{array}
\end{cases}
$$
Vertices that are open but not initially occupied are also
called \df{empty}. Initial states are chosen independently
for different vertices. Closed vertices represent pollution
or obstacles, while occupied vertices represent a growing
agent.

The configuration evolves in discrete time steps $t=0,1,2,\ldots$,
and we consider two versions of the bootstrap rule
that determines the evolution.
As usual we make $\Z^d$ into a graph by declaring vertices
$u,v\in\Z^d$ to be neighbors if $\|u-v\|_1=1$. The
\df{threshold} $\thr$ is an integer parameter.  In the \df{standard}
rule, an open site $x$ that is unoccupied at time $t$
becomes occupied at time $t+1$ if and only if
\begin{equation}
\text{at least $\thr$ neighbors of $x$ are occupied}
\label{standard}
\end{equation}
 at time $t$.
In the \df{modified} rule, the condition
\eqref{standard} is replaced with:
\begin{equation}\label{modified}
\text{for at least $\thr$ of the coordinates $i=1,\ldots, d$,
either $x-e_i$ or $x+e_i$ is occupied,}
\end{equation}
where $e_1,\ldots, e_d$ are the standard basis vectors.
In either version, closed vertices remain closed forever,
open vertices remain open, and once a vertex is occupied it remains so
for all later times.


Bootstrap percolation without pollution (the case $q=0$ in
our formulation) has a long and rich history including many
surprises. For $d\geq r \geq 1$, there is no phase
transition in $p$, in the sense that every site of $\Z^d$
is eventually occupied almost surely for every $p>0$, as
proved in \cite{van-enter} ($d=2$) and \cite{schonmann}
($d\geq 3$).  To see a phase transition, one must restrict
the dynamics in some way that is controlled by an
additional parameter. The choice that has received by far
the most attention is restriction to a finite box of large
diameter $n$.  This leads to consideration of metastability
properties of the model, which are by now understood in
great depth (see e.g.\ \cite{AL,Hol1,BBDM,GHM}), as well as
for a broad range of variant growth rules (e.g.\
\cite{GG,DE,BDMS}); for further background see the
excellent recent survey \cite{Mor}. Another natural choice
is to restrict to the complement of a random field of
obstacles of density $q>0$.  This is the subject of the
 current article, together with the recent article~\cite{GH} by two of the current authors. This model, called polluted bootstrap percolation, was
introduced by Gravner and McDonald \cite{GM} in 1997. In
the intervening peroid, rigorous progress on growth
processes in random environments has been limited, but see
\cite{DEKMS,BDGM1,BDGM2,GMa,GZH, JLTV} for some examples of
work on related models.

The principal quantity of interest in polluted bootstrap percolation is the {\it final density\/} of occupied vertices, i.e.\ the probability that the origin is eventually occupied, in the regime where $p$ and $q$ are both small.  In dimension $d=2$ with threshold $\thr=2$,  Gravner and McDonald proved that the final density is strongly dependent
on the relative scaling of $p$ and $q$.  Specifically, for the
standard model,
there exist constants $c,C>0$ such that, as $p\to 0$ and $q\to 0$ simultaneously,
\begin{equation}\label{GM-standard}
\P\bigl(\text{the origin
is eventually occupied}\bigr)\to
\begin{cases}
  1, & \text{if } q<c p^2;\\
  0, & \text{if } q>C p^2.
\end{cases}
\end{equation}
For the modified model, the probability in (\ref{GM-standard})
goes to $1$ under a stronger assumption
$q\ll p^2(\log p^{-1})^{-2}$.
(It is not known whether the logarithmic factor can be reduced
or even eliminated; see \cite{GM} and the second
problem in Section~\ref{open problems}.)

\newcommand{\perc}{\text{\tt perc}_0}

By contrast, when $\thr=2$ and $d\geq 3$, the main result from
\cite{GH} is that
occupation prevails regardless of the $p$ versus $q$ scaling.
We call a set of vertices
\df{connected} if it induces a connected subgraph of $\Z^d$,
and let $\perc$ be the event that the origin is in an infinite connected set of
eventually occupied vertices. Consider polluted bootstrap percolation on $\Z^d$ with $d\ge 3$, threshold
  $\thr=2$, density $p>0$ of initially occupied vertices,
  and density $q>0$ of closed vertices.  Theorem $1$ of~\cite{GH} states that,
  for both the standard and modified models,
$$
\label{GH-thm1} \P\bigl(\text{\rm the origin
is eventually occupied}\bigr)\to 1\qquad\text{as }(p,q)\to (0,0),
$$
and moreover, $\P(\perc)$
also tends to $1$.

In this article we treat polluted bootstrap percolation on
$\Z^3$ with threshold $\thr = 3$. Our strongest result is
for the modified model given by (\ref{modified}). Similarly
to the case $d = \thr = 2$ of [GM2], but in contrast with
the $d > \thr = 2$ case of \cite{GH}, the final density
here depends on the $p$ versus $q$ scaling, but now with a
cube law (modulo logarithmic factors).

\begin{thm}\label{three-modified}
  Consider modified polluted bootstrap percolation (rule~(\ref{modified})) on
  $\Z^3$ with threshold
  $\thr=3$, density $p$ of initially occupied vertices, and density $q$ of
  closed vertices.  There exists a constant $C\in(0,\infty)$ for which the
  following hold.
\begin{enumerate}[label= \textup{(\roman*)}]
\item \label{three-modified-i} If $p,q\to 0$ in such a way that $q=o(p^3(\log p^{-1})^{-3})$ then the probability
    that the origin is eventually occupied tends to $1$,
    and indeed so does $\P(\perc)$.
\item \label{three-modified-ii} If $p,q\to 0$ in such a way that $q>Cp^3$ then the probability
    that the origin is eventually occupied tends to $0$,
    and moreover $\P(\perc)=0$ for small enough $p$.
\end{enumerate}
\end{thm}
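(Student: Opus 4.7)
The proof splits naturally into the two regimes of $q/p^3$. Part~(i) I would handle by a renormalization (good-block) construction, and part~(ii)---the more delicate principal result---by an Aizenman--Lebowitz-style estimate of internal spanning probabilities.

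For part~(ii) the plan has three main steps. First, establish a hierarchical dichotomy: if the origin ever lies in an unbounded eventually-occupied component, then at every intermediate scale $L$ some cube of side comparable to $L$ near the origin must be ``internally filled''---grown solely from its own initial seeds, with closed sites respected. Second, identify a critical scale $L^\ast=L^\ast(p,q)$ at which the internal-filling probability must be negligible. Third, bound that probability at scale $L^\ast$ via a hierarchical decomposition: the 3D modified growth inside a cube can be viewed as a cascade of ``face extensions,'' each one a 2D modified threshold-$2$ spanning event on a slab. Applying the BK inequality to treat these as disjoint occurrences, inputting sharp bounds on 2D modified polluted spanning in the spirit of Gravner--McDonald, and counting the number of admissible hierarchies, one arrives at the cube law: the product over the $\sim L^\ast$ face extensions competes with the closed-vertex density, and balance is achieved at $q \sim p^3$. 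These estimates, combined with a union bound over cubes containing the origin, yield both that the origin-occupation probability tends to $0$ and that $\P(\perc)=0$ for small enough $p$.

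For part~(i), tile $\Z^3$ into cubes of side $L=L(p)$, chosen just large enough that 2D modified spanning on a face of that size is likely (forcing $L \gtrsim (\log p^{-1})/p$). Declare a cube \emph{good} if (a)~it contains a designed seed configuration that drives 3D modified growth to fill the cube, and (b)~no closed site lies in an obstructive position. Under $q = o(p^3(\log p^{-1})^{-3})$ one has $qL^3 \to 0$, so obstructive closed sites are asymptotically absent; a seed-density calculation shows the designed nucleation is present in a cube with probability tending to $1$. One then verifies that adjacent good cubes merge their occupied regions through shared faces, and concludes via supercritical 3D site percolation on the coarse-grained lattice of good cubes to obtain $\P(\perc)\to 1$.

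The chief obstacle in both parts is the intricate 2D-within-3D structure of modified growth. In~(ii), the combinatorial entropy of growth hierarchies must not overwhelm the probabilistic cost, which requires a canonical choice of growth history along the lines of Holroyd's 2D framework adapted to three dimensions. In~(i), the nucleation template must be shown robust to the rare pollution, either by probabilistically avoiding closed sites or by actively routing growth around them---a construction whose feasibility depends on the precise geometry of the 3D modified rule.
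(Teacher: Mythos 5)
Both parts of your proposal diverge substantially from the paper's proofs, and both contain gaps that appear to be fatal to the strategy as written.

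\textbf{Part (i).} Your ``good cube'' is defined by internal nucleation: the cube should become fully occupied from its own seeds, with no obstructive closed sites, at a scale $L\sim(\log p^{-1})/p$. But for threshold $\thr=3$ the critical side length for internal spanning (even in the unpolluted model) is doubly exponential, of order $\exp\exp(c/p)$, so a cube of side $(\log p^{-1})/p$ is astronomically subcritical and internally spans with probability tending to $0$, not $1$; no ``seed-density calculation'' saves this, since any deterministic nucleation template small enough to appear w.h.p.\ in such a cube cannot fill it under the $\thr=3$ rule. The paper sidesteps this by not requiring internal nucleation. It defines $N$-openness to mean only that every axis-parallel line meeting the box contains a seed and the box has no closed site, and $N$-occupation to mean the entire box is initially occupied (probability $p^{N^3}$, which is tiny). \cref{theta3m-growth} then shows the renormalized system evolves as $\thr=2$ modified polluted bootstrap on $\Z^3$, and the conclusion follows by invoking the companion paper's Theorem~1, which gives $\P(\perc)\to1$ for all $p,q\to0$ regardless of their relative scaling. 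That last point is essential and is exactly what your ``supercritical coarse-grained site percolation'' step misses: the renormalized seed density is vanishingly small (indeed much smaller than the renormalized pollution density), so an ordinary site-percolation comparison does not apply; one needs the full $d\ge3$, $\thr=2$ result.

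\textbf{Part (ii).} Your first step, the ``hierarchical dichotomy'' asserting that if the origin is in an unbounded eventually-occupied component then a cube at every intermediate scale near the origin must be internally filled from its own seeds, is not correct. In polluted bootstrap, occupation can invade a box entirely from the outside; there is no localization to internal spanning events. (Indeed, in the unpolluted model $q=0$ the origin is always eventually occupied for every $p>0$, while internal spanning of subcritical boxes has probability tending to $0$, so the dichotomy fails already in the simplest case.) The Aizenman--Lebowitz/Holroyd hierarchy framework is built to estimate the size at which a box \emph{does} span, not to bound the probability that a vertex is invaded from far away, and no application of BK to ``face extensions'' can recover that. The correct mechanism, and the one the paper implements, is construction of a \emph{blocking} structure: a random stegosaurus $Z$ (a perturbed $\ell^1$-ball built from nice boxes via a Lipschitz-percolation-style dual surface, decorated with plates and keystones) such that every vertex exposed to the outside is either closed or surrounded by a long initially unoccupied cross. \cref{super-sneaky} then compares the external $\thr=3$ invasion of $Z$ with an internal $\thr=2$ process, and \cref{threshold 2} shows the latter does essentially nothing because $Z$ has polynomial size. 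The $q\sim p^3$ scaling emerges not from your balance between hierarchy entropy and 2D spanning probabilities but from the requirement that a box of side $\sim\delta/p$ contain a closed vertex, i.e.\ from $qL^3\gtrsim1$.

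In short, neither the mechanism (blocking barrier vs.\ internal-spanning bound) nor the key technical input ($\thr=2$ results on $\Z^d$, $d\ge3$, vs.\ 2D Gravner--McDonald estimates) matches the paper, and the two central structural claims you rely on --- internal nucleation at scale $(\log p^{-1})/p$, and the internal-filling dichotomy --- are both false.
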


Our methods rely on the technology of
oriented surfaces introduced recently in \cite{DDGHS}.
The article \cite{GH} also used this technology,
but for different purposes.  The proof in \cite{GH} involves the
construction of an oriented open surface on which occupation is able to spread.
Our proof of Theorem~\ref{three-modified} (i) relies crucially on the main result of \cite{GH} for the threshold $r=2$ model (as stated above), together with a relatively straightforward renormalization argument.  Interestingly, we need almost the full power of the result of \cite{GH}, since the renormalized system that we apply it to has $p,q\to 0$ with $p$ much smaller than $q$.  We do not know any other route to proving this bound. The main contribution of this article is Theorem~\ref{three-modified} (ii), whose proof will use elaborate oriented surfaces to block growth rather than to facilitate it.

Turning to the standard model, when $q>Cp^3$, we are able to construct
infinite blocking surfaces
that prevent the spread of occupied vertices from one direction.
However, our approach requires
a number of surfaces of different orientations to
be stitched together to create a finite blocking surface
of diameter a power of $p^{-1}$; this size restriction is
needed to prevent any substantial
growth of occupation on the inside.
For the stitching not to leak occupation,
we need a larger density of occupied vertices, resulting in the
following weaker result in which the powers in the two bounds do not match each other.
It is a very interesting open problem to determine the right $p$ versus $q$ scaling for the standard model.

\begin{thm}\label{three-standard}
  Consider standard bootstrap percolation (rule~(\ref{standard})) on $\Z^3$
  with threshold
  $\thr=3$, density $p$ of initially occupied vertices, and density $q$ of
  closed vertices.  There exists a constant $C\in(0,\infty)$
  for which the
  following hold.
\begin{enumerate}[label= \textup{(\roman*)}]
\item \label{three-standard-i} If $p,q\to 0$ in such a way that
$q=o(p^3(\log p^{-1})^{-3})$, then the probability
    that the origin is eventually occupied tends to $1$,
    and indeed so does $\P(\perc)$.
\item \label{three-standard-ii} If $p,q\to 0$ in such a way that $q>Cp^2$, then the probability
    that the origin is eventually occupied tends to $0$,
    and moreover $\P(\perc)=0$ for small enough $p$.
\end{enumerate}
\end{thm}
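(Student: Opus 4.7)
\textbf{Plan for part \ref{three-standard-i}.}
This follows at once from Theorem~\ref{three-modified}\ref{three-modified-i} by a coupling. For $r=d=3$, rule \eqref{modified} demands an occupied neighbor along each of the three coordinate axes and thus forces at least three occupied neighbors, so every site that becomes occupied under the modified dynamics automatically satisfies the standard threshold condition~\eqref{standard}. Starting from a common initial configuration, the standard-rule occupied set dominates the modified-rule occupied set at every time, and the events $\{\text{origin eventually occupied}\}$ and $\perc$ are monotone in the occupied set. Hence the conclusion of Theorem~\ref{three-modified}\ref{three-modified-i} is inherited by the standard model under the same hypothesis $q=o(p^3(\log p^{-1})^{-3})$.

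\textbf{Plan for part \ref{three-standard-ii}.}
The overall goal is to produce, with probability tending to $1$ as $p\to 0$, a bounded closed two-dimensional surface enclosing the origin that blocks the standard dynamics, and then to verify that the enclosed region does not itself nucleate enough occupation to reach that surface from inside. The construction rests on the oriented surface technology of \cite{DDGHS}; the stronger claim $\P(\perc)=0$ for small $p$ will then follow by repeating the construction on a diverging sequence of scales and applying a Borel--Cantelli-type argument to rule out $\perc$ almost surely.

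First I would build one-sided blocking surfaces perpendicular to a fixed coordinate axis. Once the half-space on one side is fully occupied, growth through a flat layer reduces to leading order to a $2$-dimensional standard bootstrap process with threshold $2$, which by \eqref{GM-standard} is subcritical at density $q > Cp^2$. The real payoff of the oriented surface framework is that the surface is allowed to meander locally around unfavorable regions, and a renormalization/Peierls argument along the surface should give one-sided \emph{infinite} blocking surfaces already when $q > Cp^3$. To close the surface into a topological sphere around the origin, I would then stitch together six such one-sided surfaces, one for each coordinate half-axis, along the one-dimensional seams where pairs of differently oriented surfaces meet. Along each seam the two $2$D face analyses give no information against growth propagating along the seam itself, so a separate local blocking argument is needed; counting the effective dimensions of wandering available in that lower-dimensional setting suggests that seams can only be sealed once $q > Cp^2$, which is the source of the weaker hypothesis in part~\ref{three-standard-ii}. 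The enclosed region would be taken of diameter $p^{-K}$ for some fixed $K$, which is deeply subcritical for unpolluted $3$-dimensional standard threshold-$3$ bootstrap, guaranteeing that with high probability the interior cannot nucleate enough growth to reach the boundary.

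The main obstacle is the stitching step. Each face and each seam is tractable in isolation, but matching them requires extending the oriented surface formalism of \cite{DDGHS} to several orientations simultaneously with compatible boundary behavior along shared seams, and carefully managing the dependencies between the $2$D face-blocking events and the $1$D seam-blocking events. A secondary, more routine, difficulty is calibrating the interior scale $p^{-K}$ to be simultaneously large enough for the Peierls estimates on the surface to apply and small enough that no $3$D critical droplet forms inside before the blocking surface has taken effect.
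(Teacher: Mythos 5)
Your argument for part \ref{three-standard-i} is correct and is exactly the paper's route: the proof of Theorem~\ref{three-modified}\ref{three-modified-i} in Section~\ref{low-q} is written for the modified rule, and since for $r=d=3$ rule \eqref{modified} implies rule \eqref{standard}, the standard dynamics dominate the modified dynamics and the conclusion transfers by monotonicity.

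Part \ref{three-standard-ii}, however, is an outline with genuine gaps, only one of which (the stitching) you acknowledge. First, your control of the interior is aimed at the wrong process. Once the exterior of the blocking set is occupied, a vertex of the set with one (respectively two) neighbors outside needs only two (respectively one) occupied neighbors inside, so near the boundary the interior evolves like a threshold-$2$ process fed by the exterior; ``deep subcriticality of unpolluted threshold-$3$ bootstrap at scale $p^{-K}$'' says nothing about this. The paper's resolution is Proposition~\ref{super-sneaky} (comparison of the threshold-$3$ dynamics with occupied boundary conditions to an internal threshold-$2$ dynamics), combined with requiring no initially occupied vertex within $\ell^\infty$ distance $m$ of the exposed edges, and with Lemma~\ref{threshold 2} showing internal threshold-$2$ clusters in a polynomial box have bounded diameter; without an idea of this type the surface leaks through its own faces. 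Second, the corners of the envelope, near the coordinate axes, are a separate fatal problem you never address: there a closed vertex can protect only a one-dimensional ray, so the mutual-protection scheme unravels; the paper needs the keystones (cubes of side $20L+1$ with all eight corners nice, costing the fixed power $q^{48}$ for all six corners of the envelope) and consequently polynomially many independent attempts at increasing radii (Section~\ref{all together}), which is also why the set must have polynomial diameter and why the internal comparison above is needed at all.

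Third, your face mechanism is not justified: the $q\gtrsim p^3$ one-sided blocking comes from closed vertices protecting octants whose one-dimensional edges of length of order $1/p$ are free of initially occupied vertices, i.e.\ from surfaces transverse to the diagonal directions $(\pm1,\pm1,\pm1)$ (the envelope is a perturbed octahedron, not a cube with six faces perpendicular to the axes), and importing the two-dimensional bound \eqref{GM-standard} to stop a three-dimensional half-space invasion is itself a substantial unproved claim, since the third dimension supplies extra occupied neighbors. Finally, the actual source of the exponent $2$ in the standard model is concrete rather than a ``dimension count along seams'': a thickness-one plate along an octahedron edge can be invaded from its two sides under rule \eqref{standard}, so plates must be thickened, forcing two closed vertices on a common axis-parallel line; this is precisely the ``swell box'' requirement of Lemma~\ref{swell-likely}, which is where $q>Cp^2$ enters the paper's proof.
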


As mentioned in \cite{Mor} and elaborated in \cite{GH}, it is easy to show directly that
when $d=\thr\ge 2$,
the final density tends to $0$ if $q$ exceeds
some small power of $p$.  In addition to addressing the case $d=r=3$, the
(ii) parts of Theorems~\ref{three-modified}
and~\ref{three-standard} yield some improvements to the powers for $d=r\geq 4$.
Indeed, assuming everything outside
$\{0\}^{d-3}\times \Z^{3}$ (respectively $\{0,1\}^{d-3}\times \Z^{3}$)
is occupied,
we can deduce that when $d=\thr\ge 3$, the final density
goes to $0$ if $q\gg p^3$ for the modified
model (respectively if  $q\gg p^{2^{4-d}}$ for the standard model).
No power-law inequality between $p$ and $q$ guaranteeing that the final
density goes to $1$ is currently known for either model when
$d=\thr\ge 4$. (See the problems (i) and (ii) in Section 6 of
\cite{GH} for open questions for $d\ge 4$.)

If obstacles are made slightly larger, then we can obtain
matching upper and lower bounds (up to logarithms) for the
standard model also. More precisely, let the initial
configuration be chosen as follows. Independently mark each
vertex as an \df{obstacle center} with probability $q$.
For each obstacle center, declare that vertex and each of
its $6$ neighbors closed; all other vertices of $\Z^3$ are
declared open. Then, conditional on the set of open
vertices, declare each open vertex independently to be
initially occupied with probability $p$.  Call this the
\df{big obstacles} initial configuration.

\begin{thm}\label{larger obstacles}
  Consider standard or modified bootstrap percolation (rule \eqref{standard} or \eqref{modified}) on
  $\Z^3$  with threshold
  $\thr=3$, and the big obstacles initial configuration with density $p$ of reserved vertices and density $q$ of
  obstacle centers.  There exists a constant $C\in(0,\infty)$ for which the
  following hold.
\begin{enumerate}[label= \textup{(\roman*)}]
\item \label{large obstacles-i} If $p,q\to 0$ in such a way that $q=o(p^3(\log p^{-1})^{-3})$ then the probability
    that the origin is eventually occupied tends to $1$,
    and so does $\P(\perc)$.
\item \label{large obstacles-ii} If $p,q\to 0$ in such a way that $q>Cp^3$ then the probability
    that the origin is eventually occupied tends to $0$,
    and $\P(\perc)=0$ for small enough $p$.
\end{enumerate}
\end{thm}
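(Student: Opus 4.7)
\emph{Proof plan.}
For the upper bound~\ref{large obstacles-i}, I would verify that the renormalization used to prove Theorems~\ref{three-modified}\ref{three-modified-i} and~\ref{three-standard}\ref{three-standard-i} applies essentially unchanged. In the big obstacles configuration, each vertex is closed with marginal probability $1-(1-q)^{7}\le 7q$ and, conditional on being open, is initially occupied with probability $p$. Moreover the closed set is a union of clusters of diameter at most $2$, so within a renormalization box of side $L\gg 1$ its statistics differ from those of an i.i.d.\ Bernoulli$(\Theta(q))$ field only by constant factors and short-range correlations. Since the hypothesis $q=o(p^{3}(\log p^{-1})^{-3})$ is preserved when $q$ is replaced by a constant multiple of itself, the renormalization and the appeal to the main result of~\cite{GH} that underlies Theorems~\ref{three-modified}\ref{three-modified-i} and~\ref{three-standard}\ref{three-standard-i} go through with only cosmetic changes.

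For the modified-rule case of the lower bound~\ref{large obstacles-ii}, the conclusion is immediate. Using independent uniform variables $U_v\in[0,1]$, I declare $v$ an obstacle center in the big obstacles model, respectively an independently closed vertex in the original model, exactly when $U_v<q$. Then the big obstacles closed set contains the independent closed set, and modified-rule dynamics is monotone decreasing in the closed set. Applying Theorem~\ref{three-modified}\ref{three-modified-ii} to the independent configuration at density $q>Cp^{3}$ therefore yields both required statements for the big obstacles configuration.

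The standard-rule case of~\ref{large obstacles-ii} is the real content, since Theorem~\ref{three-standard}\ref{three-standard-ii} supplies only the weaker bound $q>Cp^{2}$. My plan is to transcribe the oriented blocking surface construction of Theorem~\ref{three-modified}\ref{three-modified-ii} (modified rule, singleton closed vertices) into a surface for the standard rule, using the fact that each obstacle center in the big obstacles model produces a plus-shaped cluster of seven closed vertices. Such a cluster obstructs standard-rule growth near it in the same qualitative way that a single closed vertex obstructs modified-rule growth: whenever the cluster sits near a site $x$, several of $x$'s neighbors are forced closed simultaneously, so $x$ cannot easily accumulate three occupied neighbors. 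Each obstacle center should therefore be usable as a drop-in replacement for a single closed vertex in the modified-rule surface, yielding the matching bound $q>Cp^{3}$.

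I expect the main obstacle to be precisely this transcription: verifying that the geometric and combinatorial pieces of the modified-rule surface---and especially the stitching together of pieces of different orientations into a closed finite surface of diameter polynomial in $p^{-1}$---continue to block growth under the standard rule, where up to two occupied neighbors per site must be tolerated. A careful local case analysis at each type of surface piece and each junction, exploiting the extra thickness provided by obstacle clusters, should suffice, but this book-keeping is the crux of the argument.
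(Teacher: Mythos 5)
Your part~(i) and your treatment of the modified rule in part~(ii) are fine. Part~(i) is essentially the paper's argument (the paper makes the short-range dependence explicit by requiring no obstacle centers within $\ell^1$ distance $1$ of the renormalized box, so that the $N$-open sites form a $1$-dependent field handled by \cite{LSS}), and your monotone coupling reducing the modified case of~(ii) to \cref{three-modified}\ref{three-modified-ii} is correct; the paper instead disposes of the modified rule in~(ii) by noting that it suffices to block the standard dynamics, which dominates it.

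The gap is in the standard-rule case of~(ii), which you rightly call the real content but then defer to ``book-keeping.'' Replacing each closed vertex of the modified-rule stegosaurus by a plus-shaped obstacle does not yield a set $Z$ satisfying the standard-rule version of \cref{super-sneaky}. The failure is at the plates: in the modified construction these have thickness $1$, so every vertex in the interior of a plate has $\nonbrs'=2$ (both perpendicular neighbors lie outside $Z$), and condition~(ii) of \cref{super-sneaky} with $\nonbrs'$ would then demand no initially occupied vertex within distance $m$ of an entire two-dimensional plate of side of order $nL\approx p^{-292}$, an event of negligible probability. Equivalently, in the dynamics a single initially occupied vertex anywhere in a plate, together with the occupied regions on either side of it, floods the whole plate under the standard rule; the extra closed vertices of the plus help only within distance $1$ of the obstacle center, not along the rest of the plate. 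So the set $Z$ must be redesigned, not merely re-verified: as in the proof of \cref{three-standard}\ref{three-standard-ii}, the plates must be thickened into slabs spanning two nice vertices on a common axis-parallel line (the modification (Z2$'$), with swell boxes in place of good boxes). The point of the big obstacles --- and the reason the exponent improves from $p^2$ to $p^3$ --- is that a single obstacle center automatically supplies such a collinear pair of closed vertices, so the probability that a box is swell is essentially the probability that it is good, and \cref{swell-likely} then holds already for $q>Cp^3$, after which the argument of \cref{three-standard}\ref{three-standard-ii} goes through unchanged. Your proposal never identifies this thickening/swell step, and without it the plan as stated would fail.
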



\subsection*{Notation}
Two norms will be used throughout the paper: the $\ell^\infty$ norm is denoted $\|\cdot\|_\infty$ and the $\ell^1$ norm is denoted $|\cdot|$. When describing subsets of $\Z^3$, intervals denote their intersections with the integers, so for real numbers $a<b$ we write $[a,b) := [a,b)\cap \Z$, etc. In a deviation from commonly used conventions, it is useful for us to define $(b,a] = [a,b)$, and similarly for other intervals.  We use both ``vertex'' and ``site'' for elements of $\Z^3$, but in different contexts.
Vertices refer to points in the original lattice, which can be occupied, closed or empty, while sites refer to the locations of rescaled boxes, as identified by points in $\Z^3$.  Points that will \emph{eventually} refer to the locations of rescaled boxes at some later time in the proof are also referred to as sites, as in \cref{defn of shell,existence of shell}.

\section{Comparison result and outline of proofs}\label{outline}
\cref{three-modified} has two parts.  The lower bound states that the origin is eventually occupied with high probability if $q$ is small compared with $p^3$.  As mentioned earlier, this is derived via a relatively straightforward renormalization argument from the threshold $\thr=2$ result of the companion paper \cite{GH}.

The main contribution of this paper is the upper bound, which states that the origin remains unoccupied with high probability if $q$ is large compared with $p^3$.  At the heart of the proof is the following simple but subtle deterministic result comparing the
$\thr=3$ and $\thr=2$ models on a suitable set of vertices, with different boundary conditions. To state the result for the modified model, for a set $Z$ and $x\in Z$, we define
$$\nonbrs(x)=\nonbrs_Z(x):=\#\bigl\{i=1,2,3: x-e_i\notin Z \text{ or }x+e_i\notin Z\bigr\}$$
to be the number of coordinates in which $x$ has a neighbor outside $Z$.  For use in the context of  the standard model, we also let $\nonbrs'(x)=\nonbrs'_Z(x)$ be the total number of neighbors of $x$ outside $Z$.

\begin{prop} \label{super-sneaky}
Fix an integer $m\ge 1$. Fix a finite set $Z\subset \Z^3$, and
run two modified bootstrap percolation dynamics: the first with threshold $\thr=3$ and
$Z^c$ initially occupied; the second with threshold
$\thr=2$ and $Z^c$ closed.
Assume that the configuration on $Z$ satisfies the following
conditions.
\begin{enumerate}[label= \textup{(\roman*)}]
\item  Any $x\in Z$ with $\nonbrs(x)= 3$ is a closed vertex.
\item  For any $x\in Z$ with $\nonbrs(x)\ge 2$, there is no
initially occupied vertex within $\ell^\infty$ distance $m$ of $x$.
\item The final configuration in the second dynamics has no connected set
of occupied vertices with $\ell^\infty$-diameter larger than $m/2$.
\end{enumerate}
Then any vertex $x\in Z$ that is occupied at any
time by the first dynamics is also occupied by that time
in the second dynamics.

For standard bootstrap percolation,
the same statement holds  with
$\nonbrs$ replaced by $\nonbrs'$.
\end{prop}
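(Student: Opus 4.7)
The plan is to argue by induction on time $t$ that for every $x\in Z$, if $x$ is occupied at time $t$ in the first dynamics (threshold $\thr=3$), then $x$ is also occupied at time $t$ in the second (threshold $\thr=2$). The base case $t=0$ is immediate, as the two dynamics agree on the initial configuration inside $Z$ and differ only on $Z^c$, which is fully occupied for the first and fully closed for the second.

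For the inductive step, suppose $x\in Z$ is occupied at time $t+1$ in the first dynamics. If $x$ was already occupied at time $t$ there, the induction hypothesis places $x$ in the time-$t$ occupied set of the second dynamics and we are done. Otherwise $x$ is newly occupied at time $t+1$, so $x$ is not closed and the threshold-$3$ modified rule fires at $x$. Set $k=\nonbrs(x)$; condition (i) rules out $k=3$. For each of the $3-k$ coordinates in which both of $x\pm e_i$ lie in $Z$, the first-dynamics rule furnishes an in-$Z$ neighbor of $x$ occupied at time $t$ in dynamics one; by induction, each such neighbor is also occupied at time $t$ in dynamics two. If $k\in\{0,1\}$, this yields at least $3-k\ge 2$ coordinates with an occupied neighbor in the second dynamics at time $t$, so the threshold-$2$ rule forces $x$ to be occupied at time $t+1$ there, as required.

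The principal obstacle is the remaining case $k=2$, which I claim cannot actually occur. Let $y\in Z$ be the in-$Z$ neighbor of $x$ furnished above (occupied at time $t$ in the second dynamics), and let $C$ be the connected component of $y$ in the final occupied set of the second dynamics. Tracing the threshold-$2$ rule backwards in time through occupied neighbors, every occupied vertex of the second dynamics is linked via occupied vertices to some initial seed, so $C$ contains an initially occupied vertex $s$. By condition (iii), $\mathrm{diam}_\infty(C)\le m/2$, and since $y$ is adjacent to $x$,
\[
d_\infty(s,x)\;\le\;d_\infty(s,y)+d_\infty(y,x)\;\le\;\tfrac{m}{2}+1.
\]
Since $d_\infty$ is integer-valued and $\lfloor m/2+1\rfloor\le m$ for every integer $m\ge 1$, this forces $d_\infty(s,x)\le m$, contradicting condition (ii), which applies because $\nonbrs(x)=2$.

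The standard case is handled identically with $\nonbrs'$ in place of $\nonbrs$ throughout: now $k=\nonbrs'(x)$ counts total neighbors of $x$ outside $Z$, the first dynamics supplies at least $3-k$ occupied in-$Z$ neighbors of $x$ (which lift to the second dynamics by induction), so the cases $k\le 1$ meet the threshold $2$ immediately; the cases $k\ge 3$ are excluded by condition (i) (otherwise the occupied boundary alone would auto-occupy $x$ in dynamics one), and $k=2$ is again ruled out by the same diameter-and-distance contradiction.
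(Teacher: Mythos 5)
Your proof is correct and follows essentially the same route as the paper's: the paper takes the first time a vertex of $Z$ is occupied by the first dynamics but not the second, rules out $\nonbrs(x)\ge 3$ by (i), rules out $\nonbrs(x)=2$ by exactly your argument that an occupied in-$Z$ neighbor would place an initial seed within $\ell^\infty$ distance $m/2+1\le m$ of $x$ (via (ii) and (iii)), and handles $\nonbrs(x)\le 1$ by transferring an in-$Z$ occupied neighbor to the threshold-$2$ dynamics. Your induction-on-time formulation and explicit tracing of the threshold-$2$ cluster back to an initially occupied vertex are just a more detailed write-up of the same argument, which the paper compresses into a single sentence for the $\nonbrs(x)=2$ case.
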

\begin{proof} Consider the modified rule;
the proof for the standard rule is nearly identical.
Assume the conclusion does not hold, and consider the
first time $t$ at which a vertex $x\in Z$ is occupied by the
first but not by the
second dynamics. As the two dynamics have the same
initial configuration on $Z$, we have $t>0$.
Then we cannot have $\nonbrs(x)\ge 3$, since closed vertices do not change.
We cannot have $\nonbrs(x)=2$ either, as $x$ has no occupied neighbors
in $Z$ at time $t-1$ by minimality of $t$. Thus
$\nonbrs(x)\le 1$, but then $x$ has an
occupied neighbor in at most one coordinate
outside of $Z$ in the first dynamics at time $t-1$, and therefore must also get occupied by the
second dynamics, a contradiction.
\end{proof}

We will apply \cref{super-sneaky} by carefully constructing a suitable random set $Z$ containing the origin.  This set will have diameter at most $p^{-s}$ for some large but fixed constant $s$.  For small $p$, this is much smaller than the critical size $e^{c/p}$ for threshold
$\thr=2$ bootstrap percolation on finite sets.  Consequently, it will follow from standard bootstrap methods (e.g.\ of \cite{AL}) that with high probability the $\thr=2$ model restricted to $Z$ does not occupy the origin and does not produce large occupied clusters (as appearing in condition (iii) of \cref{super-sneaky}), even if we reassign all internal closed vertices of $Z$ to be open. The construction of the set $Z$ will be an involved and delicate task.  We therefore explain some of the ideas before starting on the technical details.  \cref{Zfig} illustrates the key features of the (random) set $Z$ given by our construction. 

\begin{figure}
\begin{center}
\includegraphics[width=.95\textwidth]{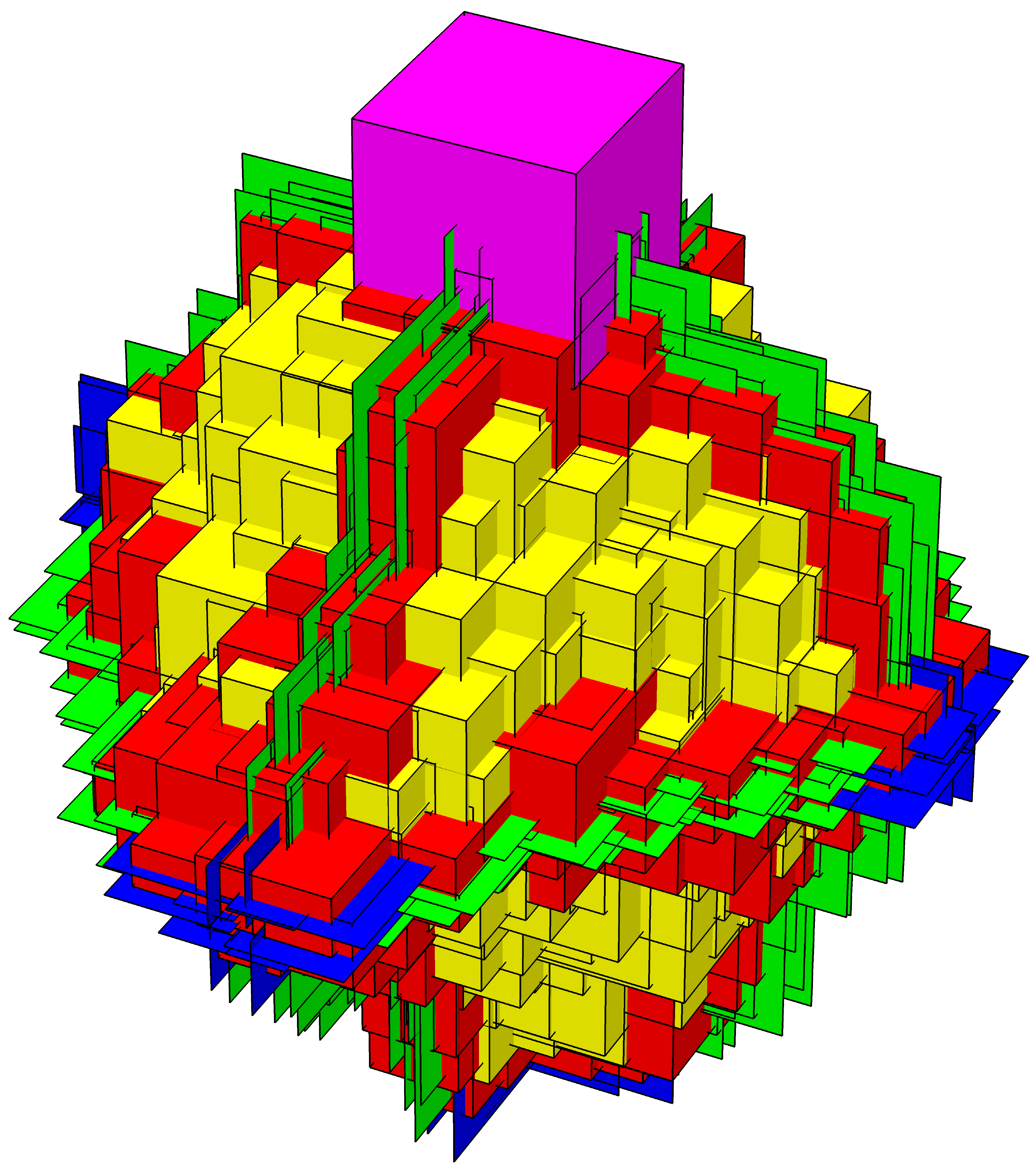}
\end{center}
\caption{\label{Zfig} The ``stegosaurus,'' $Z$, with only one of the six keystones shown, in magenta. Cuboids are shown in red or yellow, according to whether they are protected by a plate or not. Plates are shown in blue or green, according to whether they need protection by a keystone or not.  Black lines indicate ``exposed'' edges of cuboids and plates, which must be near nice vertices at the corners, and be protected by other cuboids or plates nearby.}
\end{figure}

To see why it is reasonable to expect such a set to exist,
consider first the simpler problem of protecting from
occupation from a single direction, say $(1,1,1)$.
Specifically, suppose that all vertices in the half space
$\{x: x_1+x_2+x_3> 0\}$ are initially occupied.  In the
absence of closed vertices, the occupied set will advance
deterministically to $\{x: x_1+x_2+x_3> -1\}$ at the next
step, and so on, so that all of $\Z^3$ is eventually
occupied.  Now suppose instead that the origin (say) is
closed, and no vertices in the negative octant
$(-\infty,0]^3$ are intially occupied.  Then this octant is
protected from the advancing occupation and remains empty
forever.  On the other hand, if the origin is closed but
some vertex $(-\ell,0,0)$ on a negative coordinate axis is
initially occupied, then the axis
$(-\infty,0)\times\{0\}^2$ will become fully occupied.  If
all three negative axes similarly contain an initially
occupied vertex, then once again all (open) vertices will
become occupied.  In a random configuration, we can expect
the negative axes to be free from initially occupied
vertices up to a distance $L=c/p$ with reasonable
probability (where~$c$ is small constant), so that the
octant is temporarily protected until the occupied half
space advances by $L$. Moreover, if $L^3 q$ is large, then
we can expect to find some closed vertex (the origin in
this example) in a $3$-dimensional region of length scale
$L$, so such a temporary protection is common.  This
computation is the basic reason behind the $q$ versus $p^3$
scaling. It is crucial that we need only forbid initially
occupied vertices on the $1$-dimensional edges of the
region being protected, not its $2$-dimensional faces
(which would give a different scaling).

Continuing with the example above, to make the temporary
protection permanent we need to find and use further closed
vertices before we encounter initially occupied vertices
along the axes.  Because of the $q$ versus $p$ scaling, we
need to look for these closed vertices not on the axes but
in $3$-dimensional regions.  Suppose that the origin is
closed and there are no initially occupied vertices in the
interval $[-L,0)\times\{0\}^2$ of the axis, and that in
addition the vertex $(-L,L,L)$ is closed.  This vertex
generates its own octant $(-L,L,L)+(-\infty,0]^3$, and the
axis $(-\infty,0)\times\{0\}^2$ pierces one of its faces,
obviating the need for the rest of the axis
$(-\infty,-L)\times\{0\}^2$ to be free of initially
occupied vertices.  More generally, suppose that we have an
infinite set of closed vertices.  Each generates an octant,
and suppose that each of its three edges pierces a face of
another octant before encountering an initially occupied
vertex.  Then the octants will protect each other, and all of
them will remain unoccupied forever.

For such an arrangement to exist in the random setting, the
infinite set of closed vertices discussed above should be
spaced at length scale about $L$, and should be arranged in
a kind of oriented surface, which can be regarded as a
random perturbation of the hyperplane $\{x:x_1+x_2+x_3=0\}$
with rather strict conditions on its local geometry.  We
will construct such a surface by considering renormalized
boxes of scale $L$ and by adapting the recent duality
technology introduced in \cite{DDGHS} (and developed in
\cite{GH1,GH2,GH3}) for constructing oriented surfaces in
percolation models.

We now return to the harder problem of constructing a
finite set $Z$ to protect the origin from occupation from
\emph{all} directions.  We can imagine that every vertex
outside some very large $\ell^1$-ball is initially
occupied, and we want to conclude that the origin remains
unoccupied.  The rough idea is to surround the origin by an
envelope of closed vertices at spacing about $L$, each of
which protects the cuboid having opposite corners at the
closed vertex itself and at the origin -- these cuboids are
colored red and yellow in \cref{Zfig}. There should be no
initially occupied vertices on the ``exposed'' edges (black
lines in \cref{Zfig}) of these cuboids before they pierce
others, which should happen within distance about $L$.  The
set $Z$ will be the union of the cuboids.

One approach to constructing an envelope as described is to
combine eight oriented surfaces of the previous type in
various directions, with normal vectors $(\pm 1,\pm 1,\pm
1)$, to enclose the origin in an envelope in the shape of a
perturbed regular octahedron ($\ell^1$-sphere).  However,
as we will discuss below,  complications arise at the edges
and corners of the octahedron, where two or more surfaces
intersect. It turns out to be easier to control the
geometry of edges and corners if, instead of intersecting
surfaces, we adapt the percolation duality methods
of~\cite{DDGHS} to construct the envelope directly. (A
similar method appeared in \cite{GH1}.)  The resulting
shape is still a perturbed octahedron, but its edges are
guaranteed to lie on the coordinate planes (at the level of
renormalized sites of scale $L$).

The edges and corners of the octahedron require special
treatment, essentially because they are vulnerable to
occupation from more directions.  For simplicity, suppose
that the set $Z$ approximates the octahedron $\{x: |x|\leq
t\}$ and that it includes a closed vertex at
$x=(t/2,t/2,0)$, which is the center of an edge of the
octahedron.  The line $\{(t/2,t/2)\}\times\Z$ will contain
initially occupied vertices on both sides of $x$ (typically
at distance of order $1/p$, as usual).  But we should not
expect the envelope to include any closed vertex with the
first two coordinates both greater than $t/2$, since it
would have $\ell_1$-norm greater than $t$.  Therefore,
there is nothing to protect this line, and it is vulnerable
to becoming fully occupied (except at $x$).  The conclusion
is that the vertex $x$ cannot itself protect a
3-dimensional cuboid, but only the 2-dimensional plate
$[0,t/2]^2\times\{0\}$.

An important difference between the modified and standard
models appears here.  In the standard model, even the plate
$[0,t/2]^2\times\{0\}$ mentioned above is not safe from
occupation from outside.  Vertices in its interior have two
potentially occupied neighbors on either side of the plate,
so one initially occupied vertex in the plate will cause the
entire plate to become occupied.  To prevent this, the
plate must be thickened to thickness at least $2$, and must
have closed vertices at both its outermost corners (perhaps
at $(t/2,t/2,\pm L)$, for instance).  But this means that
we need two closed vertices on the same axis-parallel line,
with no initially occupied vertex between them.  That
requires a different $q$ versus $p$ scaling, and is the
reason that our upper and lower bounds for the standard
model do not match. (However, if obstacles are made larger
as in \cref{larger obstacles}, then these plates have
thickness at least $2$ at no additional cost, and we get
the same $q$ versus $p$ scaling for the standard and
modified models.)

Returning to the modified model, our above assumption that
there was a closed vertex $x$ exactly on the
coordinate plane $\Z^2\times\{0\}$ was in fact an
unrealistic oversimplification.  Since the renormalization
scale $L$ is chosen so that $L^3 q$ is large, finding a
closed vertex typically requires a region of volume $L^3$.
So a more realistic choice is $x=(t/2,t/2,0)+z$ for some
(random) $z\in(-L,L)^3$.  In this case, the closed vertex
$x$ protects a plate $[0,x_1]\times[0,x_2]\times\{x_3\}$
that does \emph{not} include the origin.  However, it can
still protect the cuboids generated by nearby closed
vertices on the faces of the shell.  This necessitates a
further complication: these cuboids should be modified so
as to extend exactly up to the plate, rather than to the
coordinate plane.  The plate itself can be protected by
nearby cuboids. These plates are colored green and blue in
\cref{Zfig}.

Our set $Z$ resembles a stegosaurus.  The overall shape is
a perturbed octahedron ($\ell^1$-ball), with a rough
surface composed primarily of corner regions of randomly
placed cuboids.  Along each edge of the octahedron, there
is a row of protruding plates parallel to the edge, to
protect the vulnerable spine.  Each cuboid and each plate
has a closed vertex at its outermost tip, and no initially
occupied vertices on its exposed edges.  The set $Z$ is the
union of all cuboids and plates.  The locations of plates
vary in the direction perpendicular to themselves
--- they do not all lie in the same plane.  In fact, it is
useful to have \emph{two} rows of plates side by side, on
each side of the coordinate plane, and for the plates to
protrude slightly farther than the basic octahedron shape
would suggest, further enhancing the stegosaurus
comparison.  A cuboid close to the coordinate plane is
protected by plates on the far side of the plane (red
cuboids in \cref{Zfig}), while protecting plates on the
near side (green plates in \cref{Zfig}).  The extra
protrusion ensures that plates protect nearby cuboids
despite random fluctuations in the shape of the shell.

We have not yet considered the corners of the octahedron.
Here there is a serious issue.  Like the head and tail of a
stegosaurus, the corners are especially vulnerable to
attack, and require extra protection.  The problem arises
for a closed vertex on the surface of $Z$ close to the coordinate
axis, such as the closed vertex $y$ with the largest
positive first coordinate, which will be close to
$(t,0,0)$.  By similar considerations to those concerning
the edges, this closed vertex can only protect the
$1$-dimensional ray $[0,y_1]\times\{(y_2,y_3)\}$, and is
thus essentially useless for protecting other nearby plates
and cuboids. Therefore, the vertex with the \emph{second}
largest first coordinate will have a similar issue, and so
on, unravelling the entire scheme!

Our solution is rather extravagant.  Suppose that the cube
of side length $20L$ (say) centered at $(t,0,0)$ has closed
vertices exactly at all $8$ corners, and no initially
occupied vertices on its edges.  It is easy to conclude
that this cube can never be invaded by occupation from
outside, and therefore it acts as a keystone, protecting
all plates and cuboids nearby, and stabilizing the entire
structure.  This of course comes at a cost.  The
probability of the above event is very small, of order
$q^8$, and since we will need a keystone at each of the $6$
corners of the octahedron, the probability becomes
$q^{48}$.  One keystone (of the $6$) is shown in magenta in
\cref{Zfig}.   (Regardless of the details of the
construction, it appears that this probability must be
$o(1)$, since any variant of the keystone construction must
involve two closed vertices on the same axis parallel
line).   But the key point is that this probability is a
constant power of $q$ (equivalently, of $p$).

We can make
many attempts to find a shell enclosing the origin, each
larger than the previous one.  At each attempt, the random
surface construction succeeds with at least probability
$1/2$, say, regardless of the size of the surface, because
it is based on percolation arguments.  On the other hand,
the keystones only exist with probability $q^{48}$, so we
need to make about $q^{-48}$ attempts before we succeed.
(Or rather $q^{-49}$, say, to succeed with high
probability).  The resulting set will be very large, but,
as promised earlier, its size will be at most polynomial in
$1/p$.

One more complication was glossed over so far.  A surface
of the kind described above can protect the origin from
occupation from outside, but there will also be initially
occupied vertices inside it (i.e.\ in $Z$), including on or
near the faces of the cuboids.  The internal dynamics might
interact with the external dynamics through the faces,
causing a vertex on an edge of a cuboid to become occupied,
again leading to disaster.  This is where the comparison
with internal threshold $\thr=2$ dynamics in
\cref{super-sneaky} is needed.  The polynomial size of $Z$
will ensure that internal clusters have diameter bounded by
some fixed (but large) constant $m/2$ with high
probability.  Therefore, for each closed vertex $x$ that
makes up our surface, we will require absence of initially
occupied vertices not only on the surrounding axis-parallel
line segments of length of order $L$, but also within
$\ell^\infty$ distance $m$ of these line segments in all
directions.  This requirement must of course be taken into
account in the percolation and renormalization calculations
that allowed the surface to be constructed.  This creates a
somewhat delicate interplay between the various constants,
but it turns out that they can all be chosen appropriately,
as summarized below.

Turning to some further details, we will construct the set
$Z$ via renormalization.  A vertex $u$ will be declared
``nice'' if it is closed and there are no initially
occupied vertices within distance $m$ of any axis-parallel
line-segment from $u$ of length some multiple of $L$.  All
external corners of $Z$ will be nice vertices.  The
parameters will be chosen so that a cube of side $L$
contains a nice vertex with high probability.  In fact, it
will be convenient to control the approximate placement of
nice vertices within the cube, so that they can be chosen
on the outermost sides of $Z$, allowing for the protrusion
and the double row of plates discussed above. Therefore, we
will consider cubes twice the size, of side $2L+1$.  We will call such a
cube good if all eight of its side-$L$ subcubes contain a
nice vertex.  We will find a ``shell'' of good cubes
containing the origin. Following the approach of
\cite{DDGHS,GH1}, the shell will be constructed via
duality, as the boundary of a set reachable via paths of
boxes of a certain carefully chosen type. This will allow us to
accurately control its geometry.  The set $Z$ will be
constructed using the nice vertices in these cubes, with
the geometric constraints ensuring that the various
mutual protection conditions hold, provided keystones are present.

\subsection*{Symmetry conventions}
Our construction of the random set $Z$ satisfying \cref{super-sneaky} will be symmetric (in distribution) under permutations of coordinates and reflection through coordinate planes (sign-flips).  Therefore, we will state and prove many of the preliminary lemmas in \cref{existence of shell,Z is super-sneaky} for vertices in the positive octant and on the positive coordinate planes --- analogous statements clearly hold by symmetry for vertices in the other octants and coordinate planes, but we omit these statements for the sake of readability.

\subsection*{Choice of key constants}

A large integer $s$ is chosen so that a box of
diameter $p^{-s}$ is likely to contain
a successful ``stegosaurus'' $Z$. This number only
depends on the probability of the occurrence of $6$ keystones
at the specific locations, and its value is determined
(to be $s=300$) in the proof of Lemma~\ref{Z exists}.

The large integer $m$ is the radius of the initially unoccupied regions
around the edges  of $Z$ in
Proposition~\ref{super-sneaky}. Ultimately, $m$ depends
on the size of $Z$, and therefore on $s$,
as it depends on how much the threshold $r=2$
dynamics are likely to achieve inside $Z$. As a consequence of
Proposition~\ref{threshold 2}, the dependence is
a simple linear one ($m=12s$), and leads to the
choice of $m$ also in the proof of Lemma~\ref{Z exists}.

We will need a small parameter $\delta>0$, which determines the length scale
$L=\floor{\delta/(m^2p)}$. The choice of $\delta$ and the constant $C>0$ from the statement of \cref{three-modified} determine the probability that a rescaled site (a box of diameter $2L+1$) has enough strategically placed closed vertices and initially unoccupied vertices (see \cref{sec:good boxes}).  \cref{good-likely} implies this probability is at least $1-\epsilon$ when we choose
$\delta = \epsilon/(16\cdot 10^5)$ and $C$ is chosen sufficiently large depending on $m$ and $\epsilon$ (from the proof of \cref{good-likely}, we can take $C = (16\cdot 10^5 m^2/\epsilon)^3 \log(16/\epsilon)$).
To deal with finite-range
dependence between rescaled sites, we use \cite{LSS} to determine $\epsilon>0$ in the proof of \cref{success prob}.
Thus, we do not give an explicit
value to $\epsilon$, so neither $\delta$ nor $C$ are given explicit values.

We also emphasize that, after the values of the constants mentioned above are determined, $p$ needs to be assumed small enough
(depending on all these values) for
all the arguments to work properly.
We also need to assume that $q$ is
small enough (see Sections~\ref{sec:good boxes},
\ref{all together}, and~\ref{standard model}), which we may, since the probability that the origin is eventually occupied and $\P(\perc)$ are decreasing in $q$.

\section{The lower bound}\label{low-q}



In this section we prove
\cref{three-modified}(i), which also immediately implies \cref{three-standard}(i). Thus, we consider
the modified model for the rest of this section.
Pick an integer $N\ge 1$. For now, $N$ is arbitrary, but later we
choose it to be on the order a bit larger that $p^{-1}$.
A site $x\in \Z^3$ is called \df{$N$-open}
if the box $Nx+[0,N)^3$
contains no closed vertices and every nonempty intersection between a line parallel to a coordinate axis and $Nx+[0,N)^3$
contains an initially occupied vertex;  $x$ is called \df{$N$-closed} otherwise. Moreover,
a site $x$ is called \df{$N$-occupied} at some time $t$ if  the box $Nx+[0,N)^3$ is
fully occupied at that time.

\begin{lemma}\label{theta3m-growth} Choose any $N\ge 1$.
If $x\in \Z^3$ is $N$-open, and it has two nearest neighbors
$y_1$ and $y_2$ with $||y_1-y_2||_\infty=1$ (i.e.,
$y_1$ and $y_2$ are neighbors in two
different coordinates), which are both
$N$-occupied at some time $t$, then $x$ is $N$-occupied at some
later time.
\end{lemma}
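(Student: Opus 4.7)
Without loss of generality assume $y_1 = x+e_1$ and $y_2 = x+e_2$ (the other sign and coordinate choices are symmetric). The idea is that the filled boxes at $y_1$ and $y_2$ supply walls of occupied neighbors for the two adjacent faces of $B := Nx+[0,N)^3$ in the $e_1$- and $e_2$-directions, and that the initially occupied vertex guaranteed by $N$-openness on every axis-parallel line in $B$ then allows me to propagate occupation through $B$ one $e_3$-line at a time. Note also that $N$-openness rules out closed vertices inside $B$, so nothing obstructs any step.

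The argument first fills the corner edge $E := \{Nx_1{+}N{-}1\}\times\{Nx_2{+}N{-}1\}\times[Nx_3,Nx_3{+}N)$. Every $v\in E$ already has an occupied $e_1$-neighbor in $Ny_1+[0,N)^3$ and an occupied $e_2$-neighbor in $Ny_2+[0,N)^3$ at time $t$. The initially occupied vertex $v_0\in E$ (which exists since $E$ is an axis-parallel line in $Nx+[0,N)^3$ and $x$ is $N$-open) supplies the missing $e_3$-occupied neighbor for its two immediate neighbors on $E$, which therefore become occupied at time $t+1$. Iterating fills all of $E$ in at most $N$ further steps.

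I then sweep through $B$ by induction on $k := (Nx_1{+}N{-}1-a) + (Nx_2{+}N{-}1-b)\ge 0$, filling the $e_3$-line $\ell_{a,b} := \{a\}\times\{b\}\times[Nx_3,Nx_3{+}N)$ at step $k$. The case $k=0$ is $E$. For $k\ge 1$, each vertex on $\ell_{a,b}$ has an occupied $e_1$-neighbor -- either on $\ell_{a+1,b}$, filled by the inductive hypothesis, or in $Ny_1+[0,N)^3$ if $a = Nx_1{+}N{-}1$ -- and analogously an occupied $e_2$-neighbor. An initially occupied seed on $\ell_{a,b}$ (again from $N$-openness) then fills the line by the same one-dimensional propagation used for $E$. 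When $k$ reaches $2(N-1)$ the sweep has covered $B$, so $x$ is $N$-occupied. I do not anticipate any real obstacle beyond carefully tracking which coordinate neighbors are supplied at each stage; the threshold-$3$ modified rule is engineered so that a single initially occupied seed per line suffices once two transverse walls of occupation are in place.
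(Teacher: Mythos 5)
Your proof is correct. The paper's own proof of this lemma is simply ``This is an easy verification,'' so it supplies no details; your argument is the natural way to spell out that verification. The reduction to $y_1=x+e_1$, $y_2=x+e_2$ is legitimate by coordinate permutation and reflection symmetry, and for the modified rule with $r=d=3$ the induction is exactly right: once the $e_1$- and $e_2$-walls of a line $\ell_{a,b}$ are occupied (from the filled boxes at $y_1,y_2$ or from lines with smaller $k$), the seed guaranteed on that line by $N$-openness propagates along $e_3$ in the absence of closed vertices. Sweeping in order of $k=(Nx_1+N-1-a)+(Nx_2+N-1-b)$ from $0$ to $2(N-1)$ covers all lines of $Nx+[0,N)^3$, so the box becomes $N$-occupied.
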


\begin{proof}
This is an easy verification.
\end{proof}

\begin{lemma}\label{theta3-Lopen}  Let
$N=\lfloor 3p^{-1}\log p^{-1}\rfloor$ and assume
$q=o(p^3(\log p^{-1})^{-3})$. Then
the probability that $0$ is $N$-open converges to
$1$ as $p\to 0$.
\end{lemma}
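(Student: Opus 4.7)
The plan is a direct union bound over the two failure modes in the definition of $N$-open. The event that $0$ is $N$-open requires that the cube $B = [0,N)^3$ contains no closed vertex, and that each of the $3N^2$ axis-parallel line segments of length $N$ contained in $B$ contains at least one initially occupied vertex. So I would bound $\P(0 \text{ is not } N\text{-open})$ by the sum of the probability of the closed-vertex failure and the probability of the line-coverage failure, and show each tends to $0$ under the given scalings.

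For the first term, I would simply write
\[
\P(\text{some vertex of } B \text{ is closed}) \le N^3 q.
\]
With $N = \lfloor 3p^{-1}\log p^{-1}\rfloor$ and $q = o(p^3(\log p^{-1})^{-3})$, we have
\[
N^3 q \le \bigl(3p^{-1}\log p^{-1}\bigr)^3 \cdot o\bigl(p^3(\log p^{-1})^{-3}\bigr) = o(1),
\]
so this contribution vanishes.

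For the second term, a single axis-parallel line segment of length $N$ inside $B$ fails to contain an initially occupied vertex with probability at most $(1-p)^N \le e^{-pN} \le e^{-3\log p^{-1}+p} = p^3 e^{p}$. Since there are exactly $3N^2$ such segments (one for each direction and each of $N^2$ starting positions on the orthogonal face), a union bound gives
\[
\P(\text{some line has no occupied vertex}) \le 3N^2 (1-p)^N \le 3 N^2 p^3 e^{p} = O\bigl(p(\log p^{-1})^2\bigr) \to 0.
\]
Combining the two bounds yields $\P(0 \text{ is } N\text{-open}) \to 1$.

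There is really no obstacle here: the definition of $N$-open was clearly tuned so that the choice $N \sim 3 p^{-1}\log p^{-1}$ makes $(1-p)^N \sim p^3$, balancing the number of lines ($\sim p^{-2}$) against their length, while the scaling $q = o(p^3(\log p^{-1})^{-3})$ is exactly what is needed to kill the expected number of closed vertices in a box of volume $N^3 \sim p^{-3}(\log p^{-1})^3$. The only thing to be mindful of is to use both events in the definition and to count the $3N^2$ lines correctly.
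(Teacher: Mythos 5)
Your proof is correct and is essentially identical to the paper's: both arguments union-bound the two failure modes ($N^3 q \to 0$ for closed vertices and $3N^2(1-p)^N \to 0$ for the axis-parallel lines) and plug in the same choice of $N$. The only difference is cosmetic — you track the $e^p$ correction from the floor in $N$ slightly more carefully than the paper's displayed inequality does.
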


\begin{proof}
The probability that there exists a nonempty intersection between a line parallel to a coordinate axis and $[0,N)^3$
that fails to contain an initially occupied vertex
is bounded above  by
$
3N^2(1-p)^N\le 3N^2\exp(-3\log p^{-1})\le 30p(\log p^{-1})^2.
$
Moreover, the probability that this box contains
a closed vertex is at most $qN^3$, which by
the assumption approaches $0$ as $p\to 0$.
\end{proof}

\begin{proof}[Proof of Theorem~\ref{three-modified}(i) and
Theorem~\ref{three-standard}(i)]
Assume that $N$ is as in Lemma~\ref{theta3-Lopen}.
Run the threshold $r=2$ modified bootstrap rule on initially $N$-occupied and $N$-closed sites in $\Z^3$. That is, initialize a bootstrap percolation with a configuration of closed and occupied vertices corresponding to those sites that are initially $N$-closed and $N$-occupied in the rescaled configuration.
Let $Y$ be the resulting set of eventually occupied
vertices in this threshold $2$ process; \cref{theta3m-growth} guarantees that the set of eventually occupied vertices in the original, threshold $3$ process contains the set $NY + [0,N)^3$.
The probability that a site is initially $N$-closed converges to $0$ as $p\to 0$ by \cref{theta3-Lopen}, the probability that a site is initially $N$-occupied is $p^{N^3}>0$, and different sites are $N$-closed and $N$-occupied independently.
Theorem 1 of~\cite{GH} (restated here on page~\pageref{GH-thm1}) thus guarantees that, with probability approaching $1$ as $p\to 0$, the set
$Y$ contains an infinite connected set that includes the origin,
and therefore so does $NY+[0,N)^3$.
%
\end{proof}

\section{Definition of the Shell}\label{defn of shell}
In this section, we define a shell to be a subset of $\Z^3$
having certain properties. Later, this shell will consist
of rescaled boxes having certain good properties as defined
in \cref{sec:good boxes}, which will be used to construct
the stegosaurus.

Let $a\in \Z^3$ have no coordinate equal to zero.
We say that a site $x\in \Z^3$ is \df{$a$-protected by $y\in \Z^3$} if
\begin{itemize}
\item$y-x \in (0, a_1]\times (0,a_2]\times (0,a_3]$, or
\item $y = (0, y_2, y_3)$ and $y-x \in (0, a_1]\times [0,a_2]\times [0,a_3]$, or
\item $y = (y_1, 0, y_3)$ and $y-x \in [0, a_1]\times (0,a_2]\times [0,a_3]$, or
\item $y = (y_1, y_2, 0)$ and $y-x \in [0, a_1]\times [0,a_2]\times (0,a_3]$.
\end{itemize}
In other words, if one of the coordinates of $y$ is zero, then the intervals in the other two coordinates are allowed to include $0$.

Let $E\subseteq \Z^3$. A site $x\in [1, \infty)^3$ is called \df{protected by $E$} if for each
\begin{equation}\label{eq:protected1}
a\in \{(-3, 3, 3), (3, -3, 3), (3, 3, -3)\}
\end{equation}
 there exists a corresponding $y\in E$ such that $x$
 is $a$-protected by $y$.
 A site $x \in [1,\infty)\times [1, \infty)\times\{0\}$ is called \df{protected by $E$} if for each
\begin{equation}\label{eq:protected2}
 a\in \{(-3, 3,  3), (-3, 3,  -3), (3, -3,  3), (3, -3, -3)\}
 \end{equation}
 there exists a corresponding $y\in E$ such that $x$ is $a$-protected by $y$. Similarly, for $x\in \Z^3$ with three or two non-zero coordinates, we say $x$ is \df{protected by $E$} if it satisfies the definition above with the signs of the coordinates flipped in displays~\eqref{eq:protected1} or~\eqref{eq:protected2} in an identical way throughout. For example, if $x \in (-\infty, -1]\times [1,\infty)^2$, then we flip all the first coordinates, and replace~\eqref{eq:protected1} with ``$a\in \{(3, 3, 3), (-3, -3, 3), (-3, 3, -3)\}$''. For $x$ with zero or one non-zero coordinates, we will not need to refer to $x$ as being protected.

\begin{figure}
\begin{center}
\includegraphics[width=.27\textwidth]{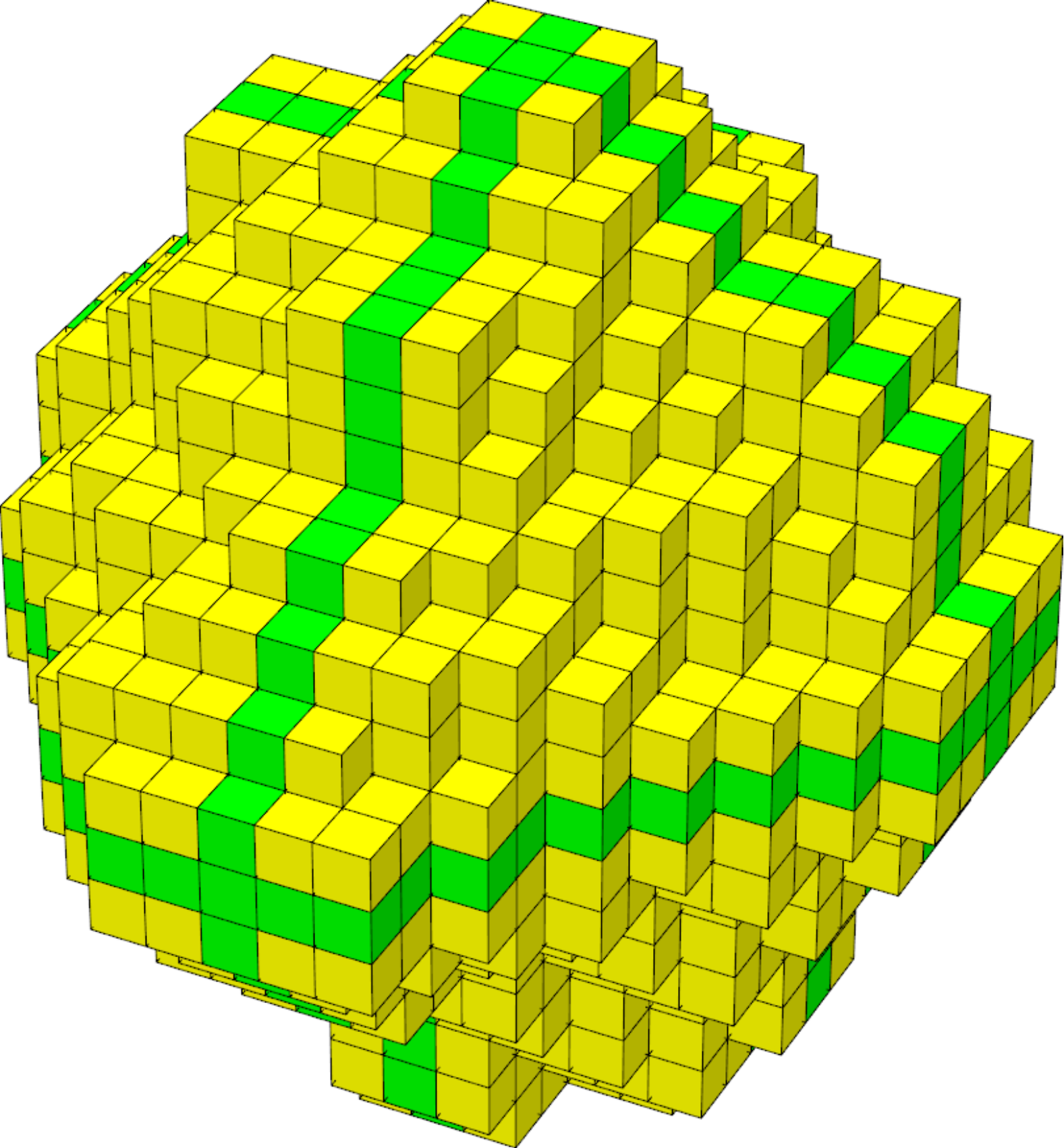} \hspace{.05\textwidth}
\includegraphics[width=.27\textwidth]{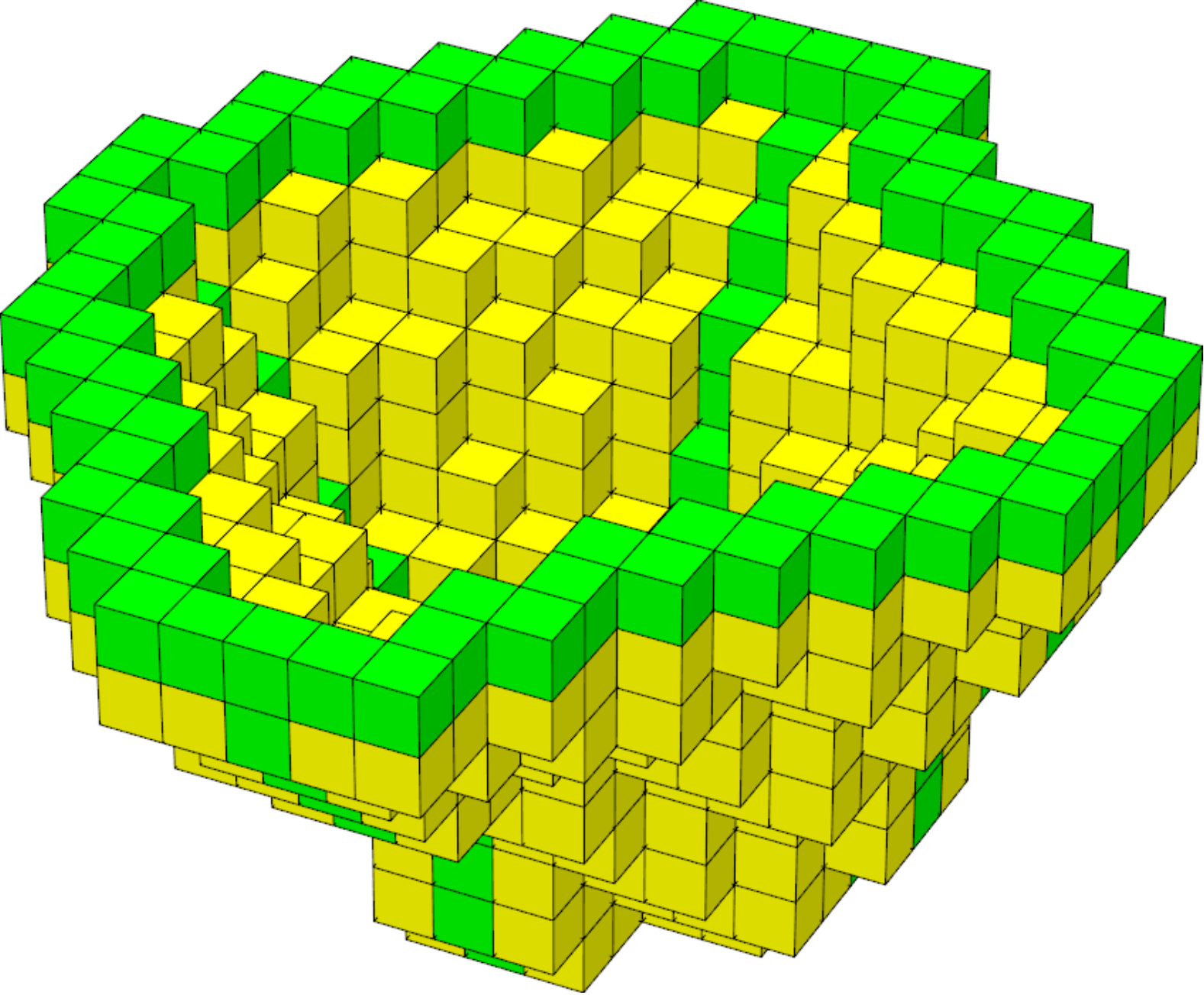}\\[1cm]
\includegraphics[width=.27\textwidth]{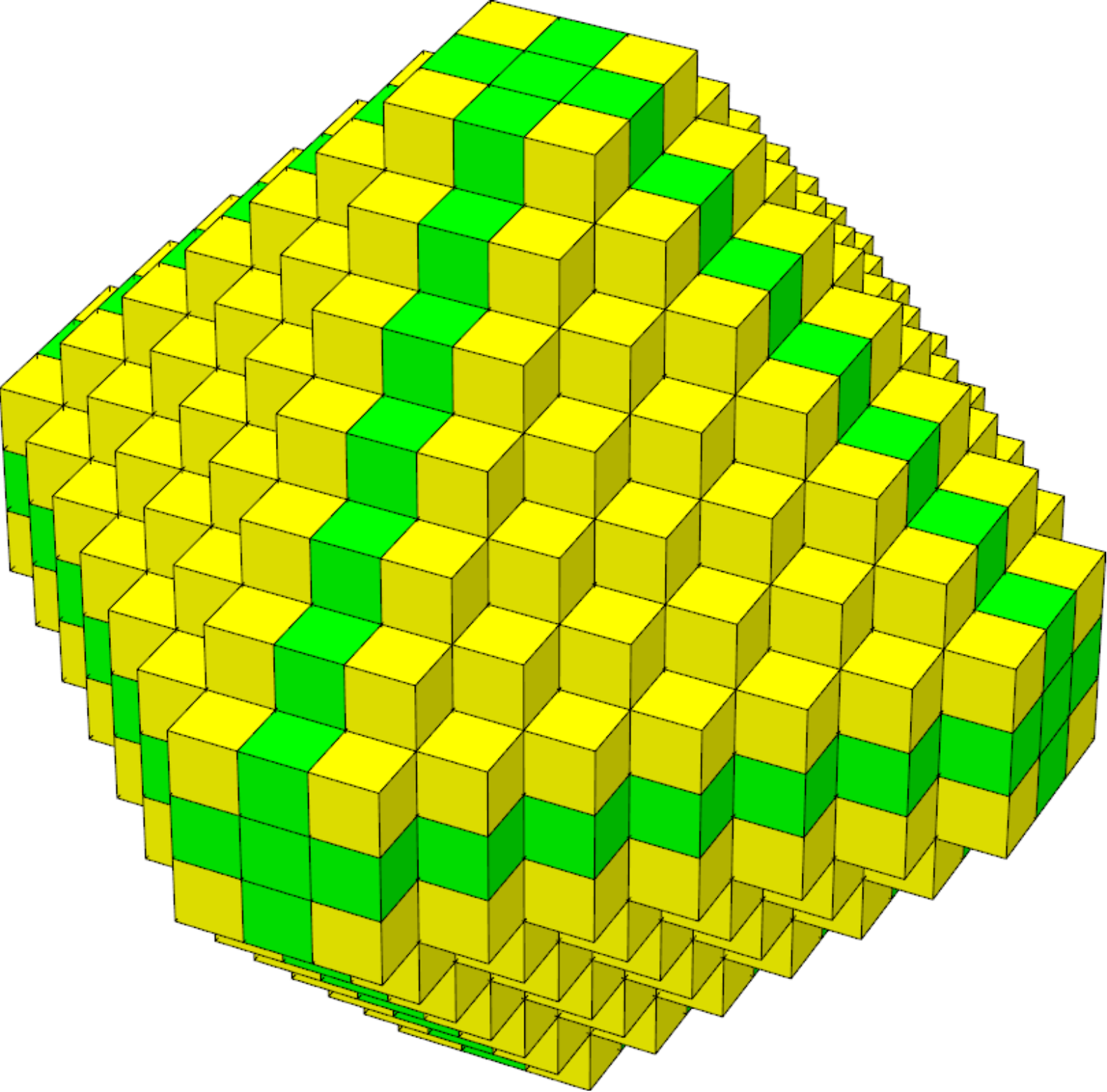}  \hspace{.05\textwidth}
\includegraphics[width=.27\textwidth]{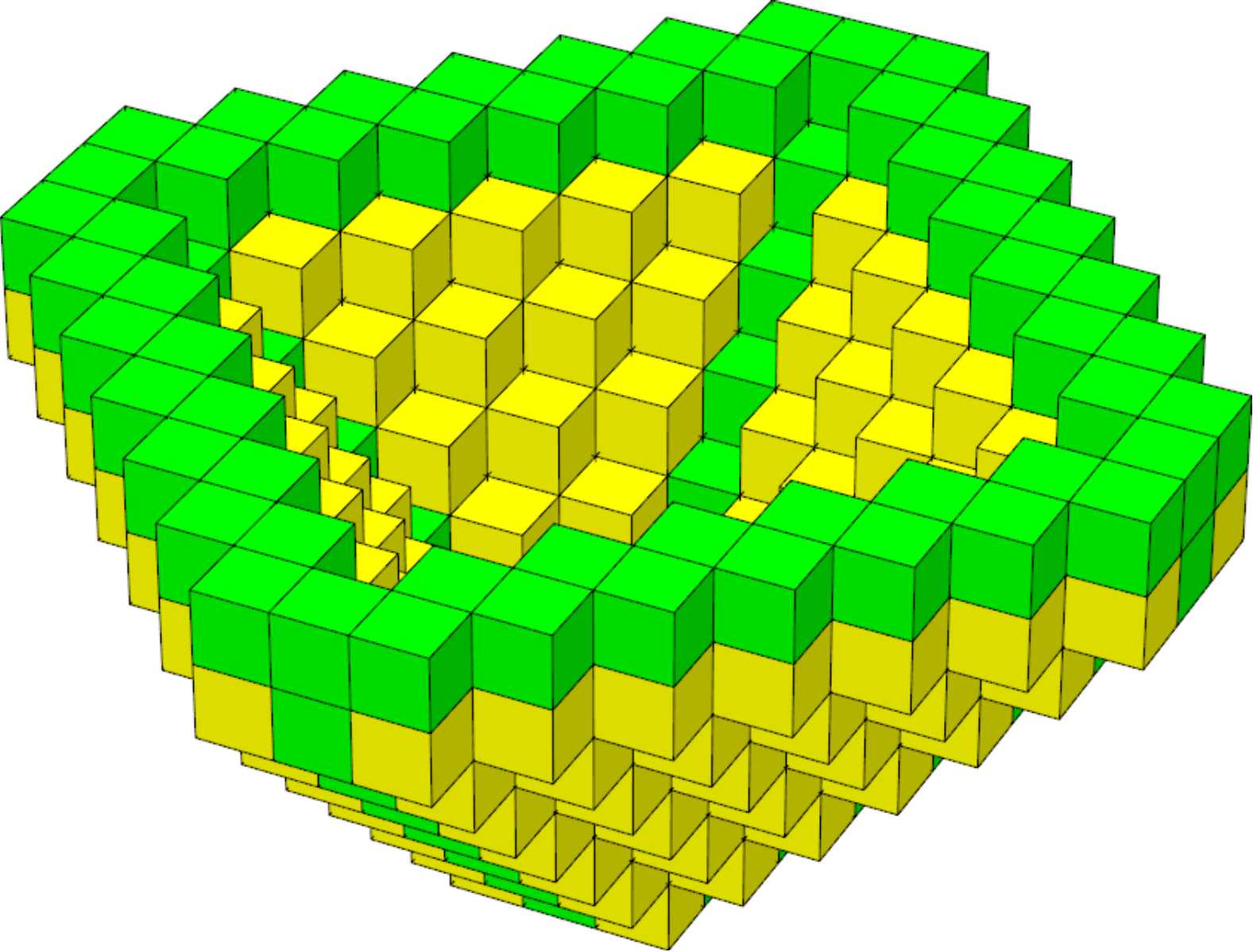}
\end{center}
\caption{{\em Top:} a set satisfying properties (S2)--(S4) of a shell; its bottom half is also shown, so that the interior can be seen.  The spine is in green.  {\em Bottom}: the additional condition (S1) is satisfied provided the shell is deterministically ``flat'' near its six corners, as here.  (A non-trivial example satisfying (S1) would be too large for a convenient illustration.)}
\label{fig-shell}
\end{figure}
A \df{shell $S$ of radius $n$} is defined to be a non-empty subset of $\Z^3$ that satisfies the following properties.  See \cref{fig-shell}.
\begin{enumerate}[label=(S\arabic*)]
\item \label{S1} The shell $S$ contains all sites $x\in
    \Z^3$ such that $|x|=n$ and $\|x\|_\infty \ge n-12$.
    (This implies that $S$ contains portions of the
    $|\cdot|$-sphere of radius $n$ in neighborhoods of
    each of the six sites $(\pm n, 0, 0), (0,\pm n, 0)$
    and $(0, 0, \pm n)$.)
\item \label{S2} For each $x\in S$, we have $n\le |x| \le n+3\sqrt{n}$
and
$\|x\|_\infty\le n$.

\item \label{S3} For each $x\in S$ that has at most one coordinate that is less than $4$ in absolute value, $x$ is protected by $S$.

\item \label{S4} For each of the eight directions $\varphi \in \{(\pm1, \pm1, \pm1)\}$, there exists an integer $k = k(\varphi)\ge n/3$ such that $k\varphi \in S$.
\end{enumerate}

If $S$ is a shell, the intersection of $S$ with the union of the three coordinate planes is called the \df{spine} of $S$.

\section{Construction of the Shell}\label{existence of shell}
In this section we prove the existence (with large probability) of shells in a suitable site percolation model. Later we will apply this fact to show that there exists a shell of rescaled boxes, each having certain good properties as defined in \cref{sec:good boxes}.

Let sites in the lattice $\Z^3$ be independently marked
black with probability $b$ and white otherwise. We wish to
consider paths of a certain type, and we start by defining
two types of steps. An ordered pair of distinct sites
$(x,y)$ is called:
\begin{enumerate}
\item a \df{taxed step} if each non-zero coordinate of $x$ increases in absolute value by $1$ to obtain the corresponding coordinate of $y$, while each zero coordinate of $x$ changes to $-1, 0$ or $1$ to obtain the corresponding coordinate of $y$;
\item a \df{free step} if  $|y| < |x|$ and $y - x \in F$, where $F$ is the set of all vectors obtained by permuting coordinates and flipping signs from any of
\[
(1, 0, 0), (1, 1, 1) \text{ and } (3, 1, 1).
\]
(For example, $(-1,3,1)\in F$.)
\end{enumerate}
Observe that, in a taxed step $(x,y)$, we have $|y|>|x|$.

A \df{permissible path from $x_0$ to $x_\gamma$} is a finite sequence of distinct sites $(x_0, x_1, \ldots, x_\gamma)$ such that for every $i = 1, \ldots, \gamma$, the pair $(x_{i-1}, x_i)$ is either a free step or a taxed step; in the latter case, we also require that $x_i$ is white. 

To obtain a (random) shell $S$ of radius $n$, we let
\begin{equation}\label{eq:Adef}
A = \{y\in \Z^3 : \text{there exists } x \in \Z^3 \text{ with } |x| < n \text{ and a permissible path from } x \text{ to } y\},
\end{equation}
and we define
\begin{equation}\label{eq:Sdef}
S = \{y \in \Z^3 \setminus A : \text{ there exists } x \in A \text{ such that } (x,y) \text{ is a taxed step}\}.
\end{equation}
The remainder of this section is devoted to proving \cref{shell}, which asserts that if the density of black sites is sufficiently high, then a shell of radius $n$ exists with large probability.
\begin{prop} \label{shell}
Let $E_n$ be the event that there exists a shell of radius $n$ consisting of black sites.  There exists $b_1 \in (0,1)$ such that for any $b>b_1$ and $n\ge 1$, we have $\P(E_n)\ge 3/4$.
\end{prop}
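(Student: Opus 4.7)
The plan is to take the random set $S$ defined by~\eqref{eq:Sdef} as the candidate shell and verify each of (S1)--(S4) with probability at least $3/4$, uniformly in $n$, by choosing $b$ close enough to $1$.

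First I record two structural observations that are essentially automatic. (a) $S$ consists of black sites: if $y \in S$ were white, then appending the witnessing taxed step $(x,y)$ to any permissible path ending at $x \in A$ would yield a permissible path ending at $y$, contradicting $y \notin A$. (b) The lower bound $|y| \geq n$ in (S2) is immediate, since any $y$ with $|y| < n$ lies in $A$ via the trivial length-zero path.

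The core analytic input is a localization estimate for $A$: for any $\eta > 0$, taking $b$ sufficiently close to $1$, the probability that some $x \in A$ satisfies $|x| \geq n+3\sqrt n$ or $\|x\|_\infty > n$ is at most $\eta$. This is a standard first-moment bound. Any permissible path from $x_0$ with $|x_0| < n$ to a site $y$ with $|y|=m \geq n$ must contain at least $\lceil (m-n+1)/3\rceil$ taxed steps, because each taxed step changes $|\cdot|$ by at most $3$ and free steps strictly decrease $|\cdot|$; an analogous counting handles the $\ell^\infty$ escape. Each taxed step requires, independently, a white target (probability $1-b$), and the branching factor of permissible paths is bounded by a universal constant. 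Summing $\sum_L C^L (1-b)^T$ over paths with $T$ taxed steps gives a geometric tail that vanishes as $b\to 1$, and crucially the estimate is local in the excess $m-n$, so it is uniform in $n$. Granted this localization event, (S2) is complete, and (S1) follows from the facts that (i) the set of $x$ with $|x|=n$ and $\|x\|_\infty \geq n-12$ has bounded cardinality ($O(1)$ independent of $n$), so all such sites are simultaneously black with probability close to~$1$; (ii) any such black $x$ lies in $S$ because incoming taxed steps into $x$ fail while incoming free steps are ruled out by localization, and (iii) the axial predecessor of $x$ has $\ell^1$-norm at most $n-1$ and so lies in $A$ via the trivial path. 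Property (S4) is handled analogously: for each diagonal direction $\varphi$, the first $k \geq \lceil n/3\rceil$ with $k\varphi$ black is geometric in $1-b$, and the same two-sided argument puts $k\varphi$ into $S$.

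The main obstacle is the protection property (S3). For $x \in S$ with at least two coordinates of absolute value $\geq 4$, and for each direction vector $a$ in~\eqref{eq:protected1} or~\eqref{eq:protected2}, I need to exhibit $y \in S$ with $y-x$ in a prescribed $3\times 3\times 3$ box. Under the localization event, a candidate $y$ in this box lying in the interior of the octant of $x$ with $|y|\in[n,n+2]$ belongs to $S$ precisely when $y$ is black: the taxed-step predecessor $y-(1,1,1)$ (or its appropriate sign-variant) has norm at most $n-1$ and is automatically in $A$, while $y$ itself being black and the localization rule out $y \in A$. A direct case count shows that each protection box admits a uniformly bounded number of such candidates (at least $16$ of the $27$ sites satisfy $|y|\in[n,n+2]$ when $|x|=n$), so protection at $x$ in a given direction fails only when all candidate sites in a fixed bounded neighborhood either are white or are witnessed by a ``bad'' local pattern. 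To turn this into a bound uniform in $n$ despite the $O(n^2)$ potential failure locations, the plan is to use a Peierls/contour argument: any failure in $S$ yields a connected cluster in a dual process whose probability decays exponentially in its size, so summing $\sum_L (C(1-b))^L$ over the possible shapes and placements gives a total bound that goes to $0$ as $b\to 1$. The delicate point, and the hardest step in the whole proof, is to ensure that every failure event can be ``localized'' to a truly bounded neighborhood, using the structure of the step set $F$, so that the Peierls contribution is finite independently of $n$. Combining the estimates yields $\P(E_n)\ge 1-\epsilon$ for $b>b_1$, which gives the desired bound $\P(E_n)\ge 3/4$.
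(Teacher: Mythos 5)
There is a genuine gap, and it sits exactly where you say the hardest step is: property \ref{S3}. Your plan treats protection as a probabilistic event --- find a black candidate $y$ near $x$ and control the $O(n^2)$ potential failures by a Peierls argument --- but the key local claim on which this rests is false, and the Peierls step is never carried out. Specifically, ``$y$ black (plus localization of $A$) implies $y\in S$'' does not hold: blackness of $y$ only forbids a \emph{taxed} last step into $y$, while a permissible path may enter $y$ by a \emph{free} step from a site of larger $\ell^1$-norm inside the annulus $\{n\le|\cdot|\le n+3\sqrt n\}$, which the localization event does not exclude. Likewise, your claim that the taxed predecessor $y-(1,1,1)$ is ``automatically in $A$'' uses $|y|\le n+2$, but an $x\in S$ is only guaranteed to satisfy $n\le |x|\le n+3\sqrt n$ by \ref{S2}, so for most $x$ the candidates $y$ have $|y|$ far above $n+2$ and the trivial-path argument collapses. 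The same confusion between ``black'' and ``in $S$'' infects your treatments of \ref{S1} (ruling out free-step entry into $x$ is not a consequence of localization) and \ref{S4} (the first black site along the diagonal need not have its taxed predecessor in $A$). Without a correct local characterization of membership in $S$, the proposed contour argument has nothing to sum over, and a plain union bound over the $\Theta(n^{2})$ or more shell sites is not uniform in $n$.

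The paper's route avoids all of this: given only that $A$ is bounded (\cref{shell_is_bounded}), properties \ref{S3} and \ref{S4} are \emph{deterministic} consequences of the duality definition \eqref{eq:Adef}--\eqref{eq:Sdef}. For $x\in S$ off the spine the unique taxed predecessor $x+(-1,-1,-1)$ lies in $A$; free steps such as $(1,1,-3)$ then place $x+(0,0,-4)$ in $A$, while sites like $x+(2,2,-2)$ must lie in $A^c$ (else free steps $(-1,-1,1)$ would return to $x$ and contradict $x\notin A$), and one of the two resulting sites is reachable from $A$ by a taxed step, hence in $S$ (\cref{facet protected}, and similarly \crefrange{1-away protected}{all protected}; \cref{S4 happens} for \ref{S4}). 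This is precisely why the step set $F$ contains $(3,1,1)$ and $(1,1,1)$: protection is built into the geometry of permissible paths, so no probability is spent on \ref{S3}--\ref{S4} at all, and the only probabilistic inputs are the path-counting bounds for \ref{S1}--\ref{S2} (\cref{permissible paths,shell_has_caps,shell_is_bounded}), which your first-moment estimates do essentially reproduce. To repair your proof you would need either to discover these deterministic lemmas or to actually construct the localized failure/contour events you allude to, which as written is the entire difficulty.
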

Note that the event $E_n$ depends only on the colors of sites in $\{x\in \Z^3 : n\le |x|\le n+ 3\sqrt{n}\}$. However, in proving \cref{shell}, we will show that the set $S$ defined in~\eqref{eq:Sdef} is, in fact, the desired shell with large probability.

\subsection{Probabilistic properties of $S$}\label{random-shell}
In this section we describe random properties of $S$ that hold with probability close to $1$ when the black-site density $b$ is close to $1$. The following lemmas will be used to show that $S$ satisfies properties \ref{S1} and~\ref{S2} with large probability.
\begin{lemma}\label{permissible paths}
There exists a constant $b_2<1$ such that for all $b>b_2$ the following holds. For any integer $k\ge 1$ and any site $x\in\Z^3$, the probability that there exists $y\in\Z^3$ such that $|y|-|x| \ge 0$ and $|y-x| = k$ and there is a permissible path from $x$ to $y$ is at most $2(1-b)^{k/25}$.
\end{lemma}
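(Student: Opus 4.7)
First, I would establish the deterministic key estimate: any permissible path $(x_0, x_1, \ldots, x_\gamma)$ from $x_0 = x$ to $x_\gamma = y$ with $|y| \ge |x|$ and $|y-x| = k$ uses at least $k/18$ taxed steps. Since a taxed step preserves the sign of each nonzero coordinate (and increases its absolute value by one) and moves each zero coordinate into $\{-1,0,1\}$, one has $|x_i|-|x_{i-1}| = |x_i - x_{i-1}| \in \{1,2,3\}$ for a taxed step. For a free step, $x_i - x_{i-1} \in F$, so $|x_i - x_{i-1}| \in \{1,3,5\}$, and $|x_{i-1}|-|x_i|\ge 1$ by definition. Writing $T$ and $n$ for the numbers of taxed and free steps along the path and summing,
\[
0 \;\le\; |y|-|x| \;\le\; 3T - n
\qquad\text{and}\qquad
k \;\le\; \sum_i |x_i - x_{i-1}| \;\le\; 3T + 5n.
\]
The first inequality gives $n \le 3T$, whence $k \le 18T$.

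Next I would set up a union bound over \emph{templates}: finite sequences of distinct sites starting at $x$ whose consecutive pairs are each either a taxed step or a free step, without yet requiring taxed targets to be white. From any site there are at most $27$ possible taxed successors (three options for each originally-zero coordinate) and $\#F = 38$ possible free successors. A template of length $\gamma$ with $T$ taxed steps is therefore specified by at most $\binom{\gamma}{T}\cdot 27^T\cdot 38^n$ choices (with $n=\gamma-T$), and is realized as a valid permissible path precisely when its $T$ distinct taxed-step targets are all white --- an event of probability $(1-b)^T$.

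Combining these two ingredients,
\[
\P(\text{such } y \text{ and path exist}) \;\le\; \sum_{\gamma \ge 1}\;\sum_{\substack{T+n=\gamma \\ T\ge k/18,\ n \le 3T}} \binom{\gamma}{T}\,27^T\,38^n\,(1-b)^T.
\]
Bounding $\binom{\gamma}{T}\le 2^\gamma$ and summing geometrically first in $n\in[0,3T]$ and then in $T\ge k/18$ yields a bound of the form $C_2 \bigl[C_1(1-b)\bigr]^{k/18}$ for absolute constants $C_1,C_2$, provided $b$ is close enough to $1$ that $C_1(1-b)<1$. Writing $1/18 = 1/25 + 7/450$ and choosing $b_2$ close enough to $1$ that $C_2 \bigl[C_1(1-b)^{7/25}\bigr]^{k/18} \le 2$ for every $k\ge 1$, we would conclude $\P \le 2(1-b)^{k/25}$. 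The only delicate point will be this tuning of $b_2$ to make the geometric series converge and absorb the combinatorial prefactors; the exponent $1/25$ is not sharp but is chosen to leave enough slack over the natural value $1/18$ coming from the counting.
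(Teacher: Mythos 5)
Your proof is correct but takes a genuinely different accounting route from the paper's. The paper bounds the number of candidate paths of length $\gamma$ by $46^\gamma$, where $46 = 27 + 19$: the $19$ for free steps comes from noting that at most half of the $38$ vectors in $F$ can decrease the $\ell^1$-norm from any fixed site (for each $\pm v$ pair, $|x+v|<|x|$ and $|x-v|<|x|$ cannot both hold). It then shows separately that $t\ge\gamma/4$ (from $t+f=\gamma$, $3t-f\ge0$) and $\gamma\ge\lceil k/5\rceil$ (each step having $\ell^1$-length at most $5$), so the permissibility probability is at most $(1-b)^{\gamma/4}$, and the $1/25$ exponent arises cleanly as $\tfrac15\cdot\tfrac15$. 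You instead stratify by the pair $(T,n)$, use the cruder count $38$ for free successors, but derive the tighter deterministic inequality $T\ge k/18$ directly from $n\le 3T$ and $k\le 3T+5n$, then recover $1/25$ by writing $1/18 = 1/25 + 7/450$ and absorbing the surplus exponent and combinatorial prefactors into the choice of $b_2$. Both approaches are sound and yield the stated $2(1-b)^{k/25}$; yours produces a slightly sharper deterministic lemma, while the paper's gets a smaller combinatorial constant and a more transparent decomposition $1/25 = (1/5)\cdot(1/5)$. If you wanted, replacing your $38$ with $19$ via the half-of-$F$ observation would tighten $C_1$ at no cost.
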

\begin{proof}
Suppose $\gamma\ge 1$, and consider a (self-avoiding) path $(x_0, x_1, \ldots, x_\gamma)$ such that each step is either taxed or free, and $x_0=x$ and $|x_\gamma| - |x| \ge 0$ and $|x_\gamma-x|=k$. The number of such paths is at most $46^\gamma$, since for each site in $\Z^3$ there are at most $27$ taxed steps and at most $19$ free steps originating at that site. Let $t$ and $f$ be the number of taxed and free steps in this path, and observe that
\[
t+f = \gamma \quad \text{and}\quad 3t -f \ge 0,
\]
since each taxed step can increase the $\ell^1$-norm by at most $3$ and each free step decreases the $\ell^1$-norm by at least $1$. In particular, $t\ge \gamma/4$, and the probability that this path is permissible is $(1-b)^t \le(1-b)^{\gamma/4}$. Furthermore, we in fact have $\gamma\ge \ceil{k/5}$, since each step (taxed or free) has $\ell^1$-norm at most $5$. Choosing $b_2<1$ large enough such that $46(1-b)^{1/4}\le(1-b)^{1/5} \le 1/2$ for all $b>b_2$, the expected number of permissible paths from $x$ to some $y$ with $|y|-|x|\ge 0$ and $|y-x|=k$ is at most
\[
\sum_{\gamma\ge \ceil{k/5}} 46^\gamma (1-b)^{\gamma/4} \le \sum_{\gamma\ge \ceil{k/5}} \left[(1-b)^{1/5}\right]^\gamma \le 2(1-b)^{k/25}.
\]
This completes the proof.
\end{proof}

\begin{lemma}\label{shell_has_caps}
There exists $b_3<1$ such that if $b>b_3$, then for each $n\ge 1$, the set $S$ defined by~\eqref{eq:Adef} and~\eqref{eq:Sdef} satisfies property \ref{S1} with probability at least $7/8$.
\end{lemma}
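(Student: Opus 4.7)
The plan is to show that, with high probability, every site $y$ with $|y| = n$ and $\|y\|_\infty \ge n - 12$ lies in $S$. By~\eqref{eq:Adef} and~\eqref{eq:Sdef}, this requires, for each such $y$, both (i)~$y \notin A$ and (ii)~the existence of some $x' \in A$ with $(x', y)$ a taxed step. Condition (ii) will be handled deterministically, while condition (i) will be controlled by a union bound based on a reverse-counting argument paralleling the proof of \cref{permissible paths}; this is the main technical step.

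For (ii), given $y$, define $x'$ coordinatewise by $x'_i = y_i - \mathrm{sgn}(y_i)$ when $y_i \ne 0$ and $x'_i = 0$ when $y_i = 0$. A short case check shows that $(x', y)$ is a valid taxed step (each nonzero coordinate of $x'$ increases in absolute value by $1$; each zero coordinate of $x'$ changes to a value in $\{-1, 0, 1\}$), and $|x'| = n - \#\{i : y_i \ne 0\} \in \{n-3, n-2, n-1\}$. Since $|x'| < n$, the trivial one-site permissible path $(x')$ places $x'$ in $A$.

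For (i), I bound $\P(y \in A)$ by Markov's inequality applied to the number of permissible paths of length $\gamma \ge 1$ ending at $y$ and starting at some $x_0$ with $|x_0| < n$. Counting backwards from $y$, each step has at most $46$ predecessors (at most $8$ for the reverse of a taxed step, whose source coordinates are essentially determined by $y$, and at most $|F| = 38$ for the reverse of a free step), so there are at most $46^\gamma$ such candidate sequences. For any candidate with $t$ taxed and $f$ free steps, the requirement $|y| - |x_0| \ge 1$, combined with the bounds $|x_i| - |x_{i-1}| \le 3$ for taxed steps and $|x_i| - |x_{i-1}| \le -1$ for free steps, forces $3t - f \ge 1$, and hence $t \ge (\gamma+1)/4 > \gamma/4$. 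As in the proof of \cref{permissible paths}, the probability that the $t$ distinct taxed-step endpoints are all white is at most $(1-b)^t \le (1-b)^{\gamma/4}$. Summing,
\[
\P(y \in A) \le \sum_{\gamma \ge 1} 46^\gamma (1-b)^{\gamma/4} = O\bigl((1-b)^{1/4}\bigr),
\]
provided $b$ is large enough that $46(1-b)^{1/4} < 1/2$.

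Finally, the number of corner sites $\{y : |y| = n,\, \|y\|_\infty \ge n - 12\}$ is bounded uniformly in $n$: for $n \ge 13$ the six caps are pairwise disjoint and each contains at most $1 + \sum_{m=1}^{12} 4m = 313$ points, while for $n < 13$ there are only $O(1)$ sites with $|y| = n$ in total. A union bound therefore gives $\P(\exists\text{ corner } y \in A) = O((1-b)^{1/4})$. Choosing $b_3$ close enough to $1$ that this bound is at most $1/8$, and combining with~(ii), yields that $S$ satisfies property~\ref{S1} with probability at least $7/8$.
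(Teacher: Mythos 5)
Your proof is correct and follows the same two-part structure as the paper's: for each cap site $y$ you deterministically exhibit $z$ with $|z|<n$ and $(z,y)$ a taxed step, and then bound $\P(y\in A)$ by a first-moment count over permissible paths. The one structural difference is that you inline the path-counting argument directly (summing $46^\gamma(1-b)^{\gamma/4}$ over path lengths $\gamma$ ending at $y$), whereas the paper factors this estimate through \cref{permissible paths}, which fixes the starting point and decomposes over $\ell^1$-distance $k$, incurring an extra polynomial factor $4(k+2)^2$ and the weaker exponent $1/25$. Your inlined version is cleaner and self-contained for this one lemma, and it is the more natural formulation since what you really need is a bound over paths with a \emph{fixed endpoint}; the paper's formulation pays off because \cref{permissible paths} is reused again in \cref{shell_is_bounded}. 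Two small remarks: your backward-step count of $46$ is a generous overcount (given $y$, a taxed step has at most one predecessor, and the free-step constraint $|y|<|x|$ halves $|F|=38$ to at most $19$), which is harmless; and the six caps are only guaranteed pairwise disjoint for $n\ge 25$, not $n\ge 13$, but your conclusion that the cap count is bounded uniformly in $n$ (by, say, $6\cdot 313$ with possible double-counting, or by $O(1)$ total sites when $n$ is small) still holds, so this is also harmless.
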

\begin{proof}
Let $y$ be any site satisfying $|y|=n$ and $\|y\|_\infty \ge n-12$. Observe that for any such $y$, there is a $z$ such that $|z|\le n-1$ and $(z,y)$ is a taxed step; such a $z$ can be found by decreasing the absolute value of each non-zero coordinate of $y$ by $1$ while keeping the sign of each coordinate. For example, if $n\ge 3$ and $y=(0,-n+2,2)$, then $z=(0,-n+3,1)$. Therefore, if there is no $x$ with $|x|\le n-1$ such that there is a permissible path from $x$ to $y$ (so $y$ is not in $A$), then $y\in S$. To this end, the number of sites $x$ such that $|y-x|=k\ge 1$ is at most $4(k+2)^2$, and \cref{permissible paths} implies that if $b>b_2$ and $(1-b)^{1/25}\le 1/2$, then the probability that there exists $x$ such that $|x| \le n-1$ and there is a permissible path from $x$ to $y$ is at most
\[
\sum_{k\ge 1} 4(k+2)^2\cdot  2(1-b)^{k/25} \le (1-b)^{1/25}\sum_{k\ge 1} 8(k+2)^2\cdot  2^{-(k-1)}= C(1-b)^{1/25},
\]
where $1<C<\infty$ is the value of the sum in the middle. Estimating crudely, there are at most $6\cdot 12^3$ sites $y$ satisfying $|y|=n$ and $\|y\|_\infty \ge n-12$. Therefore, taking $b_3\in (b_2,1)$ large enough such that $6\cdot 12^3C(1-b_3)^{1/25} \le 1/8$ finishes the proof.
\end{proof}

\begin{lemma}\label{shell_is_bounded}
There exists $b_4<1$ such that if $b>b_4$, then for each $n\ge 1$, the set $A$ defined by~\eqref{eq:Adef} is finite, and the set $S$ defined by~\eqref{eq:Sdef} satisfies property \ref{S2} with probability at least $7/8$.
\end{lemma}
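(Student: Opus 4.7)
My plan is to prove the two assertions by union bounds grounded in Lemma~\ref{permissible paths}. The basic ingredient is the bound
\[
\P(\text{there is a permissible path from }x_0\text{ to }y)\le 2(1-b)^{|y-x_0|/25},
\]
valid whenever $|y|\ge|x_0|$; this follows directly from Lemma~\ref{permissible paths} applied to the (weaker) event that \emph{some} $y'$ with $|y'-x_0|=|y-x_0|$ and $|y'|\ge|x_0|$ is reached by a permissible path from $x_0$.

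For finiteness of $A$, I observe that $A$ contains the finite set $\{z:|z|<n\}$, and every other element $y\in A$ is the endpoint of a permissible path from some $x_0$ with $|x_0|<n$, necessarily satisfying $|y|\ge n>|x_0|$. Summing the above estimate over $k=|y-x_0|\ge 1$, weighted by the surface count $O(k^2)$ for $y$, shows that the expected number of such endpoints from each $x_0$ is finite. Summing over the finitely many valid $x_0$ gives $\E|A|<\infty$, so $A$ is a.s.\ finite.

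For property~\ref{S2}, the inequality $n\le|y|$ for $y\in S$ is deterministic: if $|z|<n$ then $z\in A$ via its trivial length-zero path, so $z\notin S$. For the upper bound $|y|\le n+3\sqrt{n}$ for $y\in S$: since $y$ has a taxed-step predecessor $x'\in A$ and taxed steps have $\ell^1$-displacement at most $3$, it suffices to bound $\P(\exists\, x'\in A,\ |x'|>n+3\sqrt{n}-3)$. Summing the displayed estimate above over starting points $x_0$ with $|x_0|<n$ (a sum dominated by the closest $x_0$, at distance $\ge m+1$ when $|x'|=n+m$), I obtain $\P(x'\in A)\le C(m+1)^2(1-b)^{(m+1)/25}$. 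Multiplying by the $O((n+m)^2)$ sites on the sphere $|x'|=n+m$ and summing over $m\ge 3\sqrt{n}-2$ gives a total of order $n^3(1-b)^{3\sqrt{n}/25}$, whose supremum over $n\ge 1$ tends to $0$ as $b\to 1$, so it is at most $1/16$ for $b$ close enough to~$1$.

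For the bound $\|y\|_\infty\le n$ for $y\in S$: by six-fold coordinate/sign symmetry it suffices to show $\P(\exists\, y\in S,\ y_1>n)\le 1/96$. The key structural observation is that if $y\in S$ has $y_1\ge n+1\ge 2$, then its taxed-step predecessor $x'\in A$ must have $x'_1=y_1-1\ge n$ (the cases $x'_1\le 0$ would force $y_1\le 1$). Hence $\P(\exists\, y\in S,\ y_1>n)\le\P(\exists\, x'\in A,\ x'_1\ge n)\le\sum_{x':\,x'_1\ge n}\P(x'\in A)$. For each $m\ge 0$, the number of $x'$ with $x'_1\ge n$ and $|x'|=n+m$ is only $O((m+1)^2)$ (since $x'_1\in[n,n+m]$ forces $|x'_2|+|x'_3|\le m$), so the total is $C\sum_{m\ge 0}(m+1)^4(1-b)^{(m+1)/25}$, dominated by its $m=0$ term of order $(1-b)^{1/25}$, which is at most $1/96$ for $b$ close to $1$. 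The main obstacle is this $\|y\|_\infty$ bound: the $\ell^\infty$-bound is not implied by the $\ell^1$-bound, and requires the separate single-coordinate analysis above; what saves the union bound is the improved count $O((m+1)^2)$ (rather than $O((n+m)^2)$) of sites with $x'_1\ge n$ on each $\ell^1$-sphere, which makes the geometric sum convergent and small.
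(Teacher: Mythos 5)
Your proof is correct and takes essentially the same approach as the paper: a union bound over permissible paths, driven by Lemma~\ref{permissible paths}, to show that $A$ is contained in $\{|y|\le n+3\sqrt{n}-3\}$ and $\{\|y\|_\infty\le n-1\}$ with high probability. The only cosmetic differences are that you index the union bound over potential sites $x'\in A$ rather than over starting points with $|x_0|<n$, and you establish finiteness of $A$ almost surely via $\E|A|<\infty$ rather than extracting it from the containment events.
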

\begin{proof}
We first verify that if $y\in S$ then $n\le |y|\le n+3\sqrt{n}$ with large probability. The lower bound $|y|\ge n$ is trivial, since $\{x:|x|<n\}\subset A$ and $S\subset \Z^3\setminus A$.  To prove the upper bound, we will show that $A\subset \{y: |y|\le n+3\sqrt{n}-3\}$ with probability at least $7/8$.  Since a taxed step can increase the $\ell^1$-norm by at most $3$, this event implies that $S\subset\{x:|x|\le n+3\sqrt{n}\}$.  Observe that there are at most $8n^3$ sites $x$ with $|x|<n$. If $b>b_2$ and $(1-b)^{1/25}\le 1/2$, then summing over $k\ge 3\sqrt{n}-2$ in \cref{permissible paths} implies that the probability that there exists a permissible path from some $x$ with $|x|\le n-1$ to some $y$ with $|y|\ge n+3\sqrt{n}-3$ (so $|y-x|\ge 3\sqrt{n}-2$) is at most
\[
8n^3 \cdot 4(1-b)^{(3\sqrt{n}-2)/25}.
\]
Since this tends to $0$ as $n\to\infty$, we can choose $b'_4\ge b_2$ large enough such that the above bound on the probability is smaller than $1/8$ for all $n\ge 1$ and all $b>b'_4$.  Therefore, $A\subset \{y : |y|\le n+3\sqrt{n}-3\}$ with probability at least $15/16$.

We now show that $S\subset \{y: \|y\|_\infty \le n\}$ with probability at least $15/16$ for large enough $b$. First, observe that $A\subset \{y : \|y\|_\infty \le n-1\}$ implies that $S\subset \{y: \|y\|_\infty \le n\}$, since each taxed step has $\ell^\infty$-norm $1$, so it suffices to show that there are no permissible paths from some $x$ with $|x|\le n-1$ to some $y$ with $\|y\|_\infty \ge n$. There are at most $60(\ell\wedge (n-\ell))^2$ sites $x$ with $|x|\le n-1$ and $\|x\|_\infty = \ell \le n-1$. For any such $x$, if $\|y\|_\infty \ge n$, then $|y-x|\ge \|y - x\|_\infty \ge n-\ell$. Therefore, if $|x|\le n-1$ and $\|x\|_\infty = \ell \le n-1$, and $b>b'_4$, then summing over $k\ge n-\ell$ in \cref{permissible paths} implies that the probability that there exists a permissible path from $x$ to some $y$ with $\|y\|_\infty\ge n$ is at most
\[
60(n-\ell)^2 \cdot 4 (1-b)^{(n-\ell)/25}.
\]
Summing this expression over $\ell \le n-1$, and again using $(1-b)^{1/25}\le 1/2$ for $b>b'_4$, the probability that there exists a permissible path from some $x$ with $|x|\le n-1$ to some $y$ with $\|y\|_\infty\ge n$ is at most
\[
(1-b)^{1/25} \sum_{k=1}^\infty 240k^2\cdot 2^{-(k-1)}.
\]
Choose $b_4 \ge b'_4$ large enough so that the expression above is smaller than $1/16$. We have shown that if $b>b_4$, then each statement in \ref{S2} holds with probability at least $15/16$, so by the union bound implies, the probability that \ref{S2} holds is at least $7/8$.
\end{proof}


\subsection{Deterministic properties of $S$}
\label{deterministic-shell}
In this section we describe deterministic properties of $S$, as defined in~\eqref{eq:Adef} and~\eqref{eq:Sdef}. Throughout this section we assume that $A$ is bounded, which happens with positive probability by \cref{shell_is_bounded}. We start by showing $S$ satisfies property \ref{S4}.

\begin{lemma}\label{S4 happens}
The set $S$ satisfies property \ref{S4}.
\end{lemma}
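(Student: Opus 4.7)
The plan is to handle one direction $\varphi\in\{\pm 1\}^3$ at a time. Because the definitions of taxed and free steps, and of the ball $\{x:|x|<n\}$, are all invariant under sign-flips of coordinates, the construction of $A$ and $S$ is symmetric under every such sign-flip; it therefore suffices to find $k(\varphi)\ge n/3$ with $k(\varphi)\varphi\in S$ for the single choice $\varphi=(1,1,1)$, and the remaining seven cases follow by symmetry.

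First I would observe that $\{y\in\Z^3:|y|<n\}\subseteq A$, using the trivial length-zero permissible path $(y)$ (for which the condition on step pairs is vacuous). In particular, every spine site $j\varphi=(j,j,j)$ with $3j\le n-1$ lies in $A$. Since $A$ is bounded by the standing assumption of this section, the set $\{k\ge 0:k\varphi\in A\}$ is finite, so we may define $k^\ast:=\max\{k\ge 0:k\varphi\in A\}$. The inclusion just noted gives $k^\ast\ge\lfloor(n-1)/3\rfloor$, and hence $k^\ast+1\ge n/3$.

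It then remains to argue that $(k^\ast+1)\varphi\in S$. By the definition~\eqref{eq:Sdef} this reduces to two points: (a) $(k^\ast+1)\varphi\notin A$, which is immediate from the maximality of $k^\ast$; and (b) the pair $(k^\ast\varphi,(k^\ast+1)\varphi)$ is a taxed step. For (b), if $k^\ast\ge 1$ then $k^\ast\varphi$ has all three coordinates equal to the positive integer $k^\ast$, and the unique taxed step out of it raises each coordinate in absolute value by one to reach $(k^\ast+1)\varphi$; if $k^\ast=0$ then $(0,0,0)\to(1,1,1)$ is a valid taxed step because each coordinate of the target lies in $\{-1,0,1\}$ and the two sites are distinct. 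Setting $k(\varphi):=k^\ast+1$ therefore completes the proof for $\varphi=(1,1,1)$, and the symmetry observation does the rest.

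There is no real obstacle in this lemma once boundedness of $A$ has been supplied by \cref{shell_is_bounded}; the argument is purely deterministic and essentially mechanical. The only point requiring any care is separating the degenerate case $k^\ast=0$ (in which the spine ``dies'' immediately at the origin) from the generic case $k^\ast\ge 1$, and noting that the taxed-step definition has been set up precisely so that both cases produce the step $k^\ast\varphi\to(k^\ast+1)\varphi$.
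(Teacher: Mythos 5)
Your argument is correct and is essentially the paper's own proof: both pick out the last diagonal site $(k,k,k)$ in $A$ (you via a maximum, the paper via the equivalent minimum of $\ell$ with $(\ell,\ell,\ell)\notin A$) and then observe that $(1,1,1)$ is a taxed step, so the next diagonal site lies in $S$; symmetry handles the other seven directions. Your explicit derivation of the lower bound $k(\varphi)\ge n/3$ and the separate treatment of the degenerate $k^\ast=0$ case are slightly more careful than the paper's terse version, but there is no substantive difference in the approach.
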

\begin{proof}
By symmetry, it suffices to show that $(k,k,k)\in S$ for some $k\in \N$. Let $k = \min\{\ell\in \N: (\ell,\ell,\ell)\notin A\}\ge n$, which is finite, since $A$ is assumed to be bounded. We have $(k-1,k-1,k-1)\in A$ and $(k,k,k)\notin A$, and since $(1,1,1)$ is a taxed step, this implies $(k,k,k)\in S$.
\end{proof}

The next five lemmas essentially say that $S$ varies gradually and cannot contain large flat regions. Taken together, they imply that $S$ satisfies property \ref{S3}.
\begin{lemma}\label{facet protected}
If $x = (x_1, x_2, x_3) \in S$ is such that $x_1\ge 1$ and $x_2\ge 1$ and $x_3\ge 4$, then $x$ is $(2,2,-3)$-protected by some $y\in S$.
\end{lemma}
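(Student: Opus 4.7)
The plan is to locate $y\in S$ of the form $(x_1+1, x_2+1, t)$ with $t \in \{x_3-3, x_3-2, x_3-1\}$; any such site lies in the generic protection region $(0,2]\times(0,2]\times(0,-3]=\{1,2\}\times\{1,2\}\times\{-3,-2,-1\}$ for $a=(2,2,-3)$. The three exceptional bullets in the definition of $a$-protected cannot apply because $x_1,x_2\ge 1$ and $x_3\ge 4$.

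The starting observation is that $x\in S$ supplies a taxed-step predecessor $z\in A$; the taxed-step rules force $z=(x_1-1, x_2-1, x_3-1)$ (with the convention $x_i-1=0$ when $x_i=1$), so $|z|=|x|-3$ in all cases. The decisive move is the free step $(1,1,-3)$, obtained by permuting and sign-flipping the generator $(3,1,1)\in F$: applied to $z$, it lands at $w:=(x_1, x_2, x_3-4)$, which has non-negative coordinates since $x_3\ge 4$ and satisfies $|w|=|x|-4<|z|$, making it a valid free step. Hence $w\in A$, and a taxed step from $w$ produces the candidate $y_0:=(x_1+1, x_2+1, x_3-3)\in A\cup S$. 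If $y_0\in S$ we are done.

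Otherwise $y_0\in A$, and I would then set $t^*:=\max\{t:(x_1+1, x_2+1, t)\in A\}$, which is finite because the set is non-empty (it contains $x_3-3$) and $A$ is assumed bounded throughout this section. The crucial upper bound is $t^*\le x_3-2$: if instead $t^*\ge x_3-1$, iterating the free step $(0,0,-1)$ from $(x_1+1, x_2+1, t^*)$ (each application decreases the $\ell^1$-norm by $1$ as long as the third coordinate stays non-negative) would yield $(x_1+1, x_2+1, x_3-1)\in A$, and then the free step $(-1,-1,1)\in F$ (also $\ell^1$-decreasing by $1$) would give $x\in A$, contradicting $x\in S\subseteq\Z^3\setminus A$. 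Thus $t^*\in\{x_3-3, x_3-2\}$.

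To finish, I would verify $y:=(x_1+1, x_2+1, t^*+1)\in S$. Successive free steps $(-1,0,0)$ and $(0,-1,0)$ applied to $(x_1+1, x_2+1, t^*)\in A$ (both $\ell^1$-decreasing since coordinates remain non-negative and $t^*\ge x_3-3\ge 1$) yield $(x_1, x_2, t^*)\in A$. A taxed step from here (valid because all source coordinates are positive integers) lands on $y$, which is not in $A$ by maximality of $t^*$, forcing $y$ to be black and hence $y\in S$ by the definition of $S$. The displacement $y-x=(1,1, t^*+1-x_3)\in\{(1,1,-2), (1,1,-1)\}$ lies in the protection region. The main obstacle is the case $y_0\in A$, where the maximality argument must pair an upward contradiction (using the free step $(-1,-1,1)$ together with $x\in S$) against a downward construction of the required taxed-step predecessor of $y$, with careful bookkeeping to keep every intermediate site in the non-negative octant so that each $\ell^1$-norm computation remains valid.
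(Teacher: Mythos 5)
Your proof is correct and follows essentially the same route as the paper's: both deduce that the unique taxed-step predecessor $x+(-1,-1,-1)$ lies in $A$, apply the free step $(1,1,-3)$ to get $x+(0,0,-4)\in A$, and then split according to whether $x+(1,1,-3)$ lies in $A$ or in $S$. The only difference is in the second case, where the paper takes one further taxed step to $y=x+(2,2,-2)$ (shown to be outside $A$ via two $(-1,-1,1)$ free steps), while you instead run a maximality argument over the column $(x_1+1,x_2+1,\cdot)$ to produce $y$ with $y-x\in\{(1,1,-1),(1,1,-2)\}$; both variants are valid and yield a $(2,2,-3)$-protecting site.
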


\begin{proof}
By the definition of $S$ in~\eqref{eq:Sdef}, $x$ must be reachable from $A$ by a taxed step. Since $x$ is not on the spine of $S$, the only site from which we can reach $x$ via a taxed step is $x + (-1,-1,-1)$, so $x + (-1,-1,-1)\in A$. Taking a free step in the $(1, 1, -3)$-direction, this implies $x + (0, 0, -4)\in A$ (here is where we require $x_3\ge 4$). Since $x\in S \subseteq A^c$, we must have $x + (2, 2, -2) \in A^c$, otherwise two free steps in the $(-1,-1,1)$-direction would imply $x\in A$, giving a contradiction.

Now there are two cases. If $x + (1, 1, -3)\in A^c$, then $x + (1, 1, -3)\in S$, since it is a taxed step away from $x+(0,0,-4)\in A$, and we can take $y = x + (1, 1, -3)$. Otherwise, we have $x + (1, 1, -3)\in A$ and $x + (2, 2, -2) \in A^c$, which is a taxed step, so we can take $y=x + (2, 2, -2)$.
\end{proof}

\begin{lemma}\label{1-away protected}
If $x = (x_1, x_2, 1) \in S$ is such that $ x_1\ge 1$ and $x_2\ge 1$, then either $(x_1, x_2, 0)$ or $(x_1+1, x_2+1,0)$ is in $S$.
\end{lemma}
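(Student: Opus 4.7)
The plan is to exploit uniqueness of taxed-step predecessors. Observe that the taxed-step rule is deterministic in reverse: given any site $w$, the unique site $z$ with $(z,w)$ a taxed step has $z_i = w_i - \operatorname{sign}(w_i)$ whenever $|w_i|\ge 2$, and $z_i = 0$ whenever $|w_i|\le 1$. Consequently the sites $x = (x_1, x_2, 1)$ and $x' := (x_1, x_2, 0)$ share the \emph{same} taxed-step predecessor $z$ (its third coordinate is $0$ in either case, and its first two coordinates are determined identically by $x_1$ and $x_2$). Since $x\in S$ we have $z\in A$ and $x\notin A$, and the argument naturally splits according to whether $x'$ lies in $A$.

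If $x'\notin A$, then the taxed step $z\to x'$ with $z\in A$ exhibits $x'\in S$ directly, and we are done. Otherwise $x'\in A$, in which case I would aim to show that $x'' := (x_1+1, x_2+1, 0)\in S$. Note that $x'$ itself is the unique taxed-step predecessor of $x''$; since $x'\in A$, it suffices to prove $x''\notin A$.

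This last step is the main obstacle and I would tackle it by contradiction. Suppose $x''\in A$, witnessed by a permissible path $(y_0,\ldots, y_\gamma)$ with $|y_0|<n$ and $y_\gamma = x''$. I would then extend by one step to $x$: the displacement $x-x'' = (-1,-1,1)$ is a sign-flip of $(1,1,1)$ and hence lies in $F$, while $|x| = x_1+x_2+1 < x_1+x_2+2 = |x''|$, so $(x'', x)$ is a legitimate free step. Because $x\notin A$, the site $x$ cannot already appear among $y_0,\ldots,y_\gamma$ (otherwise a prefix would witness $x\in A$), so the extended sequence consists of distinct sites and is a permissible path from $y_0$ to $x$. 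This forces $x\in A$, contradicting $x\in S$. The only delicate bookkeeping is confirming $(-1,-1,1)\in F$, checking the strict decrease in $\ell^1$-norm so the new step is genuinely free, and noting that self-avoidance of the extended path follows automatically from $x\notin A$; everything else is immediate from the predecessor-uniqueness observation.
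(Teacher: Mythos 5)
Your proof is correct and follows essentially the same path as the paper: identify the (unique) taxed-step predecessor $(x_1-1,x_2-1,0)\in A$, rule out $(x_1+1,x_2+1,0)\in A$ via the free step $(-1,-1,1)$ back to $x$, and split on whether $(x_1,x_2,0)$ lies in $A$ to exhibit one of the two candidates as being in $S$. The only cosmetic difference is that you frame the argument explicitly around uniqueness of taxed-step predecessors and shared predecessors of $x$ and $(x_1,x_2,0)$, whereas the paper states the same facts inline; the substance is identical.
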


\begin{proof}
Since $x$ is reachable from $A$ by a taxed step and is not on the spine of $S$, we must have $(x_1-1, x_2-1, 0)\in A$. Since $x\in S \subseteq A^c$, we must have $(x_1+1, x_2+1, 0) \in A^c$, otherwise a free move in the $(-1,-1,1)$-direction would imply $x\in A$, giving a contradiction.

There are now two cases. If $(x_1, x_2, 0)\in A^c$, then $(x_1, x_2, 0)\in S$, since it is reachable by a taxed step from $(x_1-1, x_2-1, 0) \in A$ (recall that $0$-coordinates are not required to change along taxed steps). Otherwise, we can take a taxed step from $(x_1, x_2, 0)\in A$ to reach $(x_1+1, x_2+1, 0) \in A^c$, so we take $y = (x_1+1, x_2+1, 0) \in S$.
\end{proof}

\begin{lemma}\label{weakly protected}
If $x = (x_1, x_2, x_3) \in S$ is such that $ x_1\ge 1$ and $x_2\ge 1$ and $x_3\ge 1$, then either $(x_1, x_2, x_3-1)$ or $(x_1+1, x_2+1,x_3-1)$ is in $S$.
\end{lemma}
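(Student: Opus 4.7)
The plan is to mimic the proof of \cref{1-away protected} almost verbatim. When $x_3=1$, the conclusion coincides exactly with that of \cref{1-away protected}, so it suffices to treat $x_3\ge 2$. The only structural change needed is that, where the proof of \cref{1-away protected} exploited the fact that a $0$-coordinate is allowed to remain $0$ along a taxed step, we instead use that $(0,0,-1)\in F$ and is therefore available as a free step whenever it strictly decreases the $\ell^1$ norm.

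I would proceed as follows. Since $x\in S$ has all coordinates at least $1$, the unique site from which $x$ is reachable by a taxed step is $(x_1-1,x_2-1,x_3-1)$, so that site must lie in $A$. Next I would argue that $(x_1+1,x_2+1,x_3-1)\in A^c$: otherwise, since $(-1,-1,1)\in F$ and $|x|=x_1+x_2+x_3<x_1+x_2+x_3+1=|(x_1+1,x_2+1,x_3-1)|$, a free step would put $x$ in $A$, contradicting $x\in S\subseteq A^c$. I then split on whether $(x_1,x_2,x_3-1)\in A$. If it is not, I extend the permissible path to $(x_1-1,x_2-1,x_3-1)$ by the free step $(0,0,-1)$ (valid because $x_3\ge 2$, so the $\ell^1$ norm strictly decreases from $x_1+x_2+x_3-3$ to $x_1+x_2+x_3-4$) to obtain $(x_1-1,x_2-1,x_3-2)\in A$; a taxed step then reaches $(x_1,x_2,x_3-1)\in A^c$, so this site lies in $S$. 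If $(x_1,x_2,x_3-1)\in A$, the analogous free step from that site gives $(x_1,x_2,x_3-2)\in A$, and a taxed step from there reaches $(x_1+1,x_2+1,x_3-1)\in A^c$, placing the latter in $S$.

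The only subtlety I anticipate is the boundary case $x_3=2$, where the free step $(0,0,-1)$ lands on the spine (third coordinate equal to $0$); the subsequent taxed step is still valid because a $0$-coordinate is permitted to move into $\{-1,0,1\}$. A second small point is that extending a permissible path by one step requires the new endpoint to be distinct from the previous sites on that path, but if it already appears there then it is already in $A$ and the same conclusion holds. I do not foresee any deeper obstacle.
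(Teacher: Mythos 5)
Your argument is correct and follows essentially the same route as the paper: reduce to $x_3\ge 2$ via \cref{1-away protected}, deduce $x+(-1,-1,-1)\in A$ and $x+(1,1,-1)\in A^c$, then split on whether $(x_1,x_2,x_3-1)\in A$, using the free step $(0,0,-1)$ followed by a taxed step in each case. The paper's proof differs only cosmetically (it derives $x+(-1,-1,-2)\in A$ and the unneeded fact $x+(1,1,0)\in A^c$ up front), and your remarks on the $x_3=2$ boundary case and on path self-intersection are correct.
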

\begin{proof}
If $x_3=1$, this follows from~\cref{1-away protected}, so assume $x_3\ge 2$. Since $x$ is reachable from $A$ by a taxed step and is not on the spine of $S$, we must have $x + (-1,-1,-1)\in A$. Taking a free move in the $(0,0,-1)$-direction implies that $x+(-1,-1,-2)\in A$. Since $x\in S \subseteq A^c$, we must have $x+(1,1,0)\in A^c$, otherwise free moves in the $(-1,0,0)$ and $(0,-1,0)$-directions imply $x\in A$, giving a contradiction. Similarly, we must have $x+(1,1,-1)\in A^c$, since a free move in the $(-1,-1,1)$-direction brings us back to $x$.

Once again, there are now two cases. If $(x_1, x_2, x_3-1)\in A^c$, then it is in $S$, since it is a taxed step away from $x+(-1,-1,-2) \in A$. Otherwise, $(x_1, x_2,x_3-1) \in A$, which implies $(x_1, x_2, x_3-2)\in A$ by taking a free move in the $(0,0,-1)$-direction. Therefore, we have $x+(1,1,-1) = (x_1+1, x_2+1, x_3-1) \in S$, since it is a taxed step away from $(x_1, x_2, x_3-2)\in A$.
\end{proof}

\begin{lemma}\label{near spine protected}
If $x = (x_1, x_2, x_3)\in S$ is such that $x_1 \ge 1$ and $x_2\ge 1$ and $1\le x_3 \le 3$, then $x$ is $(3,3,-3)$-protected by some $y = (y_1, y_2, 0)\in S$.
\end{lemma}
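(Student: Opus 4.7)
The plan is to iteratively descend from $x = (x_1,x_2,x_3)$ with $x_3 \in \{1,2,3\}$ down to the coordinate plane $\{z_3=0\}$, by alternating applications of Lemma~\ref{weakly protected} and Lemma~\ref{1-away protected}. At each step the first two coordinates are shifted by the same nonnegative integer (either $0$ or $1$), and the total shift will be at most $x_3 \le 3$, which is exactly what the $(3,3,-3)$-protection definition (fourth bullet) requires.

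More precisely, I would construct a sequence $x^{(0)}, x^{(1)}, \ldots, x^{(x_3-1)} \in S$ with
\[
x^{(i)} = (x_1+\sigma_i,\, x_2+\sigma_i,\, x_3 - i),
\]
where $0 = \sigma_0 \le \sigma_1 \le \cdots \le \sigma_{x_3-1}$ and each $\sigma_{i+1} - \sigma_i \in \{0,1\}$. Set $x^{(0)} = x$ with $\sigma_0 = 0$. Given $x^{(i)} \in S$ with third coordinate $x_3 - i \ge 2$, Lemma~\ref{weakly protected} (applied to $x^{(i)}$, which has positive first two coordinates and third coordinate $\ge 1$) provides that either $x^{(i)} + (0,0,-1)$ or $x^{(i)} + (1,1,-1)$ lies in $S$; take this to be $x^{(i+1)}$, defining $\sigma_{i+1}$ accordingly. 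By induction $\sigma_{x_3-1} \le x_3 - 1$ and $x^{(x_3-1)} \in S$ has third coordinate equal to $1$. (If $x_3=1$, this loop is empty and $x^{(0)} = x$.)

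Now apply Lemma~\ref{1-away protected} to $x^{(x_3-1)}$: either $(x_1+\sigma_{x_3-1},\, x_2+\sigma_{x_3-1},\, 0)$ or $(x_1+\sigma_{x_3-1}+1,\, x_2+\sigma_{x_3-1}+1,\, 0)$ lies in $S$. Call this point $y = (y_1, y_2, 0)$. Then $y_1 - x_1 = y_2 - x_2 \in \{0, 1, \ldots, x_3\} \subseteq \{0,1,2,3\}$, while $y_3 - x_3 = -x_3 \in \{-3,-2,-1\}$. Hence $y - x \in [0,3] \times [0,3] \times (0,-3]$, which by the fourth bullet of the definition of $a$-protection (with $a=(3,3,-3)$, since $y$ has third coordinate zero) shows that $x$ is $(3,3,-3)$-protected by $y$.

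There is no real obstacle here beyond careful bookkeeping: the prior lemmas are tailored precisely to deliver the ``staircase'' descent with synchronized jumps in the first two coordinates, and the constraint $x_3 \le 3$ is exactly what keeps the accumulated shift within the budget of $3$ allowed by the protection definition. The edge case $x_3 = 1$ is handled directly by a single application of Lemma~\ref{1-away protected}.
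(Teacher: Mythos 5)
Your proof is correct and takes essentially the same approach as the paper: repeated descent in the third coordinate via \cref{weakly protected}, with cumulative drift in the first two coordinates bounded by $x_3\le 3$. The paper's version is slightly more compact because \cref{weakly protected} already subsumes \cref{1-away protected} when $x_3=1$, so it never needs to invoke the latter separately.
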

\begin{proof}
By~\cref{weakly protected}, either $(x_1, x_2, x_3-1)\in S$ or $(x_1+1, x_2+1,x_3-1)\in S$. If $x_3=1$ we are done, otherwise apply~\cref{weakly protected} at most twice more to whichever of these sites is in $S$.
\end{proof}

\begin{lemma}\label{spine protected}
If $x = (x_1, x_2, 0) \in S$ is such that $ x_1\ge 4$ and $x_2\ge 1$, then $x$ is $(-3,3,3)$-protected by some $y \in S$. 
\end{lemma}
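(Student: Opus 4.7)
The plan is to adapt the proof of \cref{facet protected} to a spine site, seeking some $y \in S$ with $y - x \in (0,-3]\times (0,3]\times (0,3]$. I would try two candidates: $y' := (x_1-3, x_2+1, 1)$ and $y'' := (x_1-2, x_2+2, 2)$. Both satisfy the coordinate conditions for $a$-protection with $a = (-3,3,3)$, and I expect to show that at least one of them lies in $S$, by a case distinction.

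Since $x \in S$ and $x$ is not on either positive coordinate axis (using $x_1 \ge 4$ and $x_2 \ge 1$), the unique source of a taxed step to $x$ gives $z_0 := (x_1-1, x_2-1, 0) \in A$. I would then extend to further $A$-sites via free steps: the step $(-3,1,1) \in F$ yields $(x_1-4,x_2,1) \in A$, and then $(0,0,-1) \in F$ yields $(x_1-4,x_2,0) \in A$. Each is a valid free step because it decreases the $\ell^1$-norm by $1$ (using $x_1 \ge 4$ and $x_2 \ge 1$ to keep coordinates non-negative). Next I would show that $y'' \in A^c$: otherwise, two successive free steps in direction $(1,-1,-1) \in F$ starting from $y''$ would place $x$ itself in $A$, contradicting $x \in S \subseteq A^c$.

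Finally, I would split on whether $y' \in A^c$ or $y' \in A$. In the first case, $y'$ is the target of a taxed step from $(x_1-4,x_2,0) \in A$, so $y' \in S$, and we take $y=y'$. In the second case, a taxed step from $y' \in A$ targets $y'' \in A^c$, so $y'' \in S$, and we take $y=y''$. The main technical subtlety I expect is the boundary case $x_1 = 4$, where the source $(x_1-4, x_2, 0) = (0, x_2, 0)$ has a zero first coordinate, so the taxed step from it to $y' = (1, x_2+1, 1)$ must simultaneously choose value $1$ for the images of both the zero first and the zero third coordinates; this is permitted by the taxed-step definition.
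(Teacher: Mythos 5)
Your proof is correct, and it takes a somewhat different route from the paper's. The paper first shows that $(x_1,x_2,1)\in S$ (it is a taxed step away from $(x_1-1,x_2-1,0)\in A$, and lies in $A^c$ since the free step $(0,0,-1)$ would return to $x$), and then invokes the already-established \cref{facet protected}, in a coordinate-permuted form, to obtain a $y\in S$ that $(-3,2,2)$-protects $(x_1,x_2,1)$; this same $y$ then $(-3,3,3)$-protects $x$. You dispense with that reduction and instead inline the two-candidate case analysis from the proof of \cref{facet protected}, with slightly different candidate sites: your $y'$ and $y''$ have third coordinates $1$ and $2$, rather than the $2$ and $3$ one would get by passing through $(x_1,x_2,1)$. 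Your steps all check out: the $\ell^1$-norm decreases correctly at each free step, the vectors $(-3,1,1)$, $(0,0,-1)$ and $(1,-1,-1)$ are all in $F$, the taxed-step verifications are valid, and you correctly handle the boundary case $x_1=4$ where the taxed-step source $(0,x_2,0)$ has a zero first coordinate (the definition indeed permits two zero coordinates to move to $1$ simultaneously). The tradeoff is that your proof is self-contained at the cost of a few extra free-step verifications, while the paper's is shorter by reusing \cref{facet protected} as a black box.
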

\begin{proof}
Since $x\in S$ is reachable from $A$ by a taxed step, we have $(x_1-1, x_2-1, 0) \in A$. Also, we have $(x_1, x_2, 1)\in A^c$, otherwise we could return to $x$ by the free step $(0,0,-1)$. Therefore, $(x_1, x_2, 1)\in S$ because it is reachable from $(x_1-1, x_2-1, 0) \in A$ by a taxed step. \cref{facet protected} implies that there exists $y\in S$ such that $(x_1, x_2, 1)\in S$ is $(-3,2,2)$-protected by $y$. Therefore, $x$ is $(-3,2,3)$-protected by $y$.
\end{proof}

\begin{lemma}\label{all protected}
If $x \in S$ is such that at most one coordinate has absolute value smaller than $4$, then $x$ is protected by $S$.
\end{lemma}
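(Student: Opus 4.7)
The plan is to conclude this lemma by a straightforward case analysis, in which each case is handled by the appropriate coordinate-permuted or sign-flipped analogue of one of the preceding lemmas (\cref{facet protected,near spine protected,spine protected}). By the symmetry conventions from \cref{outline}, I may assume $x$ lies in the positive octant $[1,\infty)^3$ or on the positive face $[1,\infty)^2\times\{0\}$ of a coordinate plane; the other seven octants (resp.\ eleven faces/axes, where relevant) are handled identically, since the rules defining taxed and free steps are invariant under coordinate permutations and sign-flips, so every preceding lemma has an analogue in which coordinates are permuted or signs of coordinates are flipped.

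Under the hypothesis, either all three coordinates have absolute value at least $4$, or exactly one coordinate (which, by symmetry, I assume to be $x_3$) has absolute value less than $4$. In the first case I may take $x\in[4,\infty)^3$, so $x$ has three nonzero coordinates and I must verify $a$-protection for each $a\in\{(-3,3,3),(3,-3,3),(3,3,-3)\}$. For $a=(3,3,-3)$, \cref{facet protected} yields $(2,2,-3)$-protection, which implies the weaker $(3,3,-3)$-protection; for $a=(-3,3,3)$ and $a=(3,-3,3)$, the analogous versions of \cref{facet protected} obtained by permuting the roles of the first (resp.\ second) and third coordinates yield $(-3,2,2)$- and $(2,-3,2)$-protection, which imply the required ones.

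In the second case, by symmetry $x_1,x_2\ge 4$ and $x_3\in[0,3]$. If $x_3\in\{1,2,3\}$, then $x$ still has three nonzero coordinates, so I need the same three direction vectors. For $a=(3,3,-3)$ I apply \cref{near spine protected} directly; for $a=(-3,3,3)$ and $a=(3,-3,3)$, the coordinate-permuted versions of \cref{facet protected} apply (their hypotheses require only $x_1\ge 4$ or $x_2\ge 4$, together with the remaining two coordinates being $\ge 1$). If instead $x_3=0$, then $x$ has two nonzero coordinates, so protection requires the four direction vectors $\{(-3,3,3),(-3,3,-3),(3,-3,3),(3,-3,-3)\}$. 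For $a=(-3,3,3)$ I apply \cref{spine protected} directly, and for $a=(3,-3,3)$ I apply the analogue with the first two coordinates swapped. The remaining two directions are obtained from the sign-flipped (in the third coordinate) versions of \cref{spine protected}, whose proofs go through verbatim by invoking \cref{facet protected} at $(x_1,x_2,-1)$ rather than $(x_1,x_2,1)$.

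There is no real obstacle here beyond bookkeeping: all the substantive geometric work has been carried out in the preceding lemmas, and the only care required is to match each required direction of protection with the correctly-oriented analogue of the lemma that supplies it. The one subtle point is to make sure, in each case, that the protecting site $y$ supplied by the invoked lemma indeed satisfies the definition of $a$-protection for the claimed (weaker) direction vector $a$, which follows because each box $(0,a_1]\times(0,a_2]\times(0,a_3]$ used in the definition is monotone in each entry of $a$.
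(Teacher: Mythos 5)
Your proposal is correct and follows essentially the same route as the paper's own proof: a case analysis on whether zero, one ($\in\{1,2,3\}$), or one ($=0$) coordinate of $x$ has absolute value less than $4$, with each required direction of $a$-protection supplied by the appropriate permuted/sign-flipped version of \cref{facet protected}, \cref{near spine protected}, or \cref{spine protected}. The one extra bit of explicitness you add — observing that $(2,2,-3)$-protection implies $(3,3,-3)$-protection by monotonicity of the defining intervals in $a$ — is a correct and worthwhile remark that the paper leaves implicit.
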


\begin{proof}
If $x = (x_1, x_2, x_3)$ is such that $\min(x_1, x_2, x_3)\ge 4$, then \cref{facet protected} implies $x$ is $(3,3,-3)$-protected by some $y\in S$. By permuting coordinates, $x$ is also $(3,-3,3)$-protected and $(-3,3,3)$-protected by sites in $S$. If $x_1\ge 4$ and $x_2 \ge 4$ and $1 \le x_3 \le 3$, then \cref{facet protected} implies $x$ is $(3,-3,3)$-protected and $(-3,3,3)$-protected by sites in $S$, and~\cref{near spine protected} implies $x$ is $(3,3,-3)$-protected by some site $y$ in the spine of $S$. If $x_1\ge 4$ and $x_2 \ge 4$ and $x_3=0$, then \cref{spine protected} implies that $x$ is $(-3,3,3)$-protected and $(3,-3,3)$-protected by sites in $S$. By symmetry under flipping signs, we also have that $x$ is $(-3,3,-3)$-protected and $(3,-3,-3)$-protected by sites in $S$. Finally, by symmetry under permuting coordinates, if $x$ is in the non-negative octant and has at most one coordinate with absolute value smaller than $4$, then $x$ is protected by $S$, and by symmetry under flipping signs, this holds for all $x\in S$ with at most one coordinate smaller than $4$ in absolute value.
\end{proof}

\subsection{Proof of \cref{shell}}
By \cref{shell_is_bounded,shell_has_caps} and the union bound, if $b_1 = b_3 \vee b_4$, then $b>b_1$ implies that with probability at least $3/4$, the set $A$ given by \eqref{eq:Adef} is bounded and the set $S$ given by \eqref{eq:Sdef} satisfies \ref{S1} and \ref{S2}. On the event that $A$ is bounded, \cref{all protected} implies that $S$ satisfies \ref{S3}, and \cref{S4 happens} implies that $S$ satisfies \ref{S4}. Therefore $S$ is a shell of radius $n$, and $\P(E_n) \ge 3/4$. \qed


\section{Good boxes}\label{sec:good boxes}
Recall the integer $m\ge 1$, which was introduced in \cref{super-sneaky}. Its value will be chosen in the proof of \cref{Z exists} in Section~\ref{all together}; until then, its value is fixed but arbitrary. Let $L = \lfloor \delta/(m^2p) \rfloor$,
where $\delta>0$ is a small constant to be fixed later (in \cref{success prob}).
Also let $M = 36L$. Define the set
\begin{equation}\label{J-definition}
J = (\{0\}^2 \times [-M, M])\ \cup\ (\{0\}\times [-M, M] \times \{0\})\ \cup\ ([-M, M]\times \{0\}^2).
\end{equation}
\begin{figure}
\begin{center}
\raisebox{-0.5\height}{\includegraphics[width=.2\textwidth]{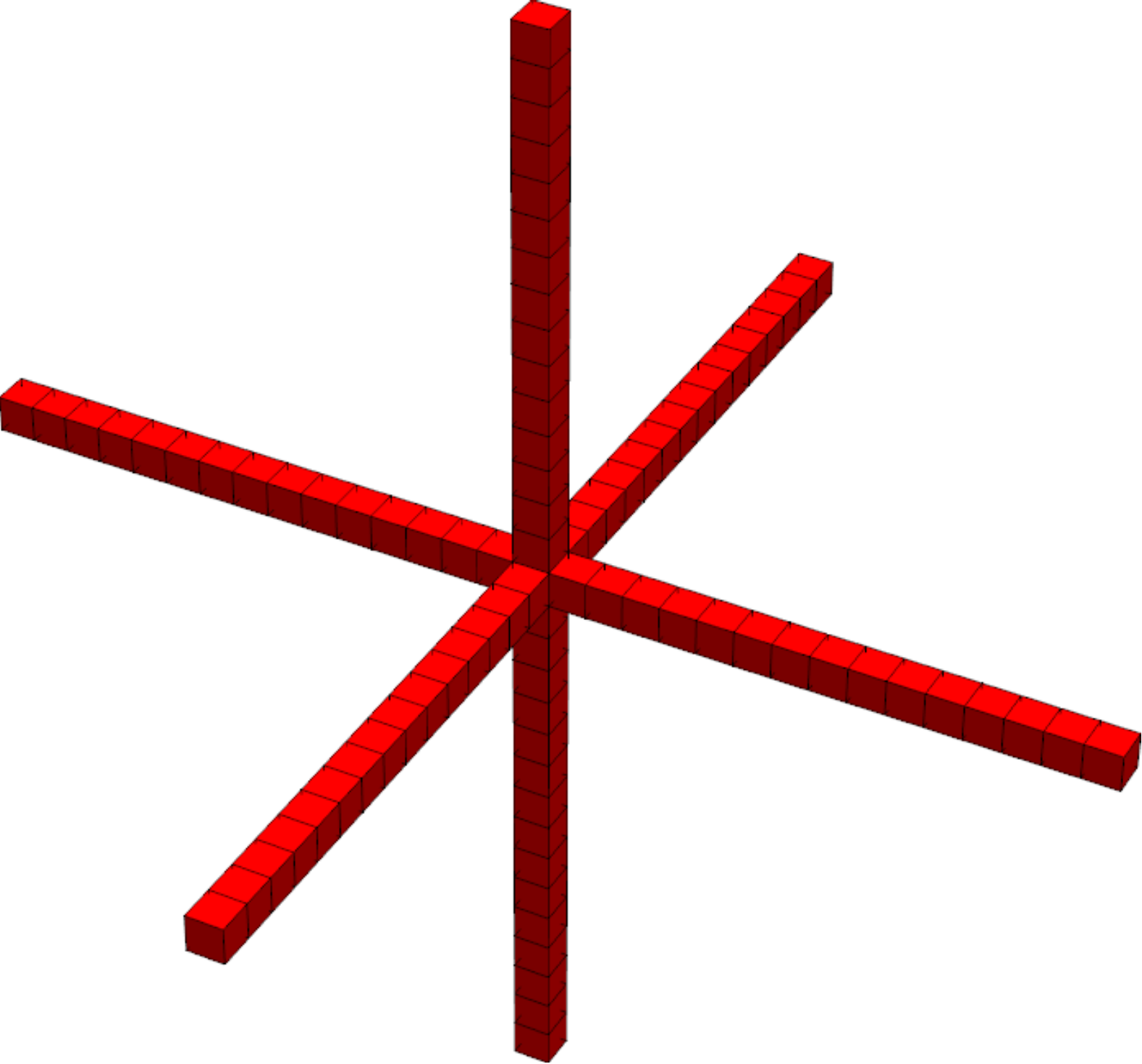}} \hspace{.02\textwidth}
\raisebox{-0.5\height}{\includegraphics[width=.3\textwidth]{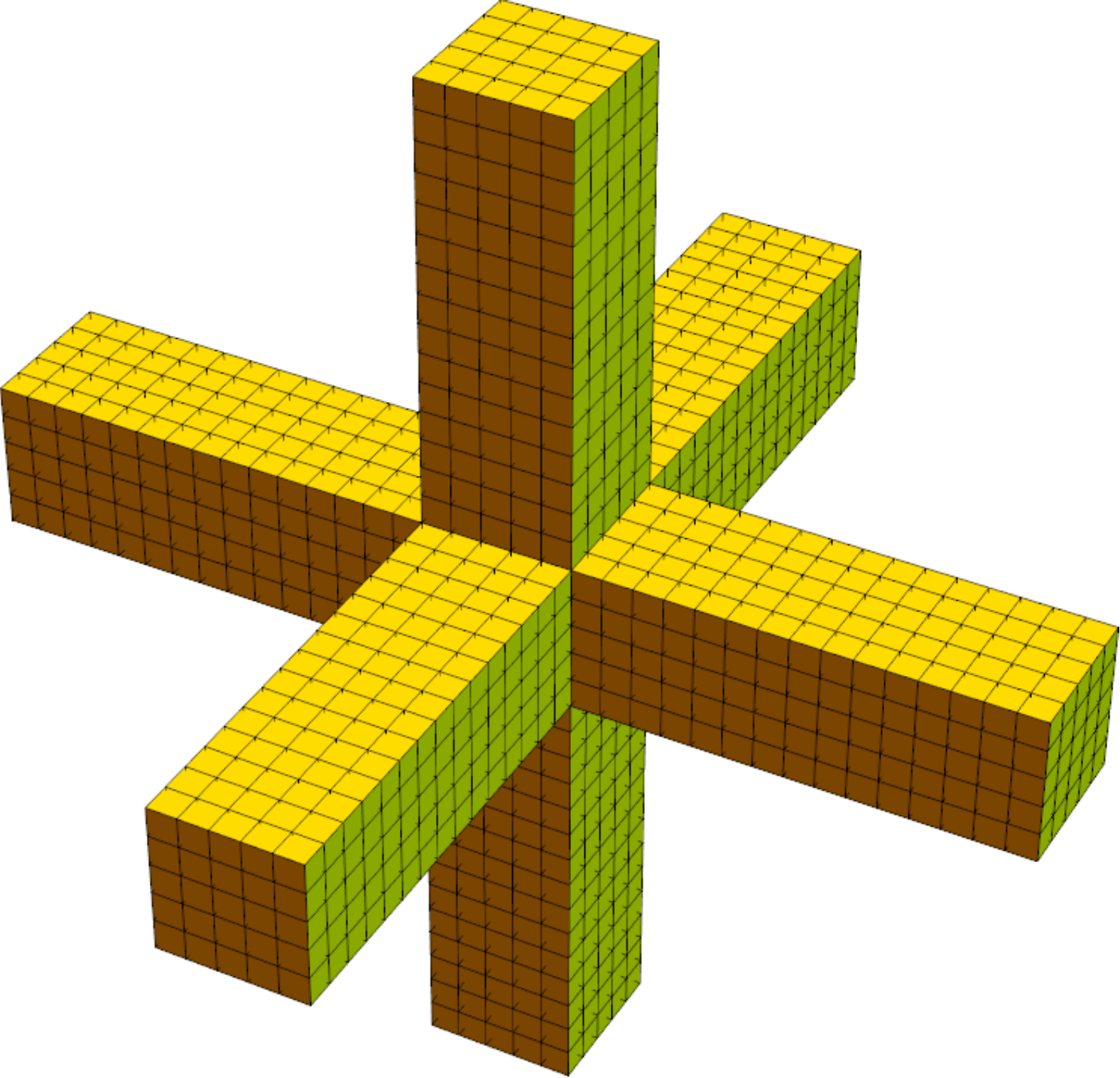}} \hspace{.02\textwidth}
\raisebox{-0.5\height}{\includegraphics[width=.4\textwidth]{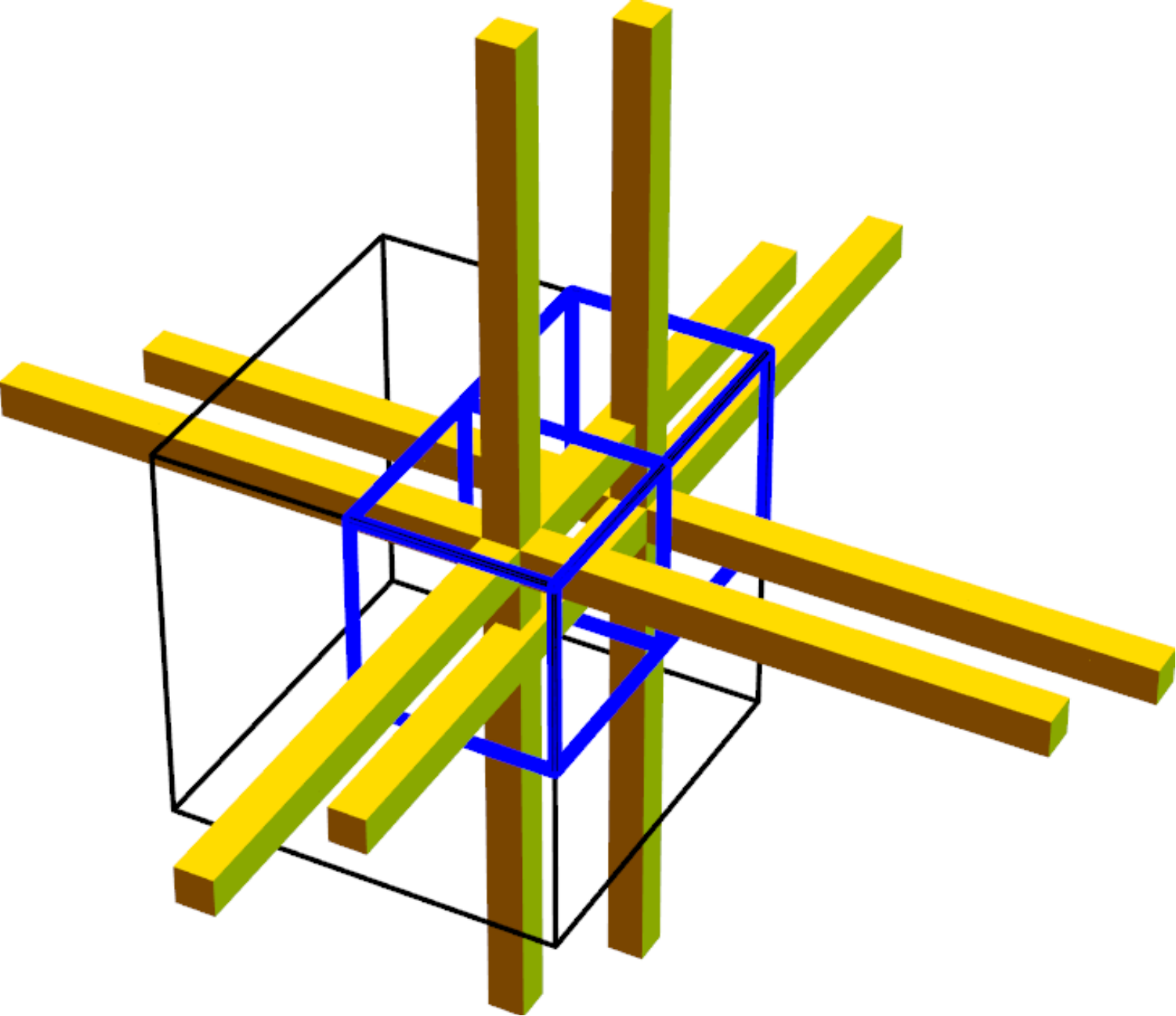}}
\end{center}
\caption{\emph{Left:} the set $J$.  \emph{Middle:} the unoccupied set centered at a nice vertex.  \emph{Right:} a good box: each of the eight subcubes contains a nice vertex (two are shown).}
\label{fig-good}
\end{figure}
Call a vertex $u\in \Z^3$ \df{nice} if $u$ is closed and every vertex within $\ell^\infty$ distance $m$ of the set $u + J$ is initially unoccupied. For each $x\in \Z^3$, define the rescaled box at $x$ to be
\[
Q_x := (2L+1) x + [-L,L]^3.
\]
For $(\sigma_1, \sigma_2, \sigma_3)\in\{+, -\}^3$, define the \df{$(\sigma_1, \sigma_2, \sigma_3)$-subcube} of $Q_x$ to be the set $(2L+1)x + (0,\sigma_1 L]\times(0,\sigma_2 L]\times(0,\sigma_3 L]$. We call a box $Q_x$ \df{good} if each of its eight subcubes contains a nice vertex. See Figure~\ref{fig-good}
for an illustration.

Our goal in this section is to
obtain a lower bound on the probability that a box
is good. We start with a few preliminaries.
We call a vertex $u\in \Z^3$
\df{viable} if
every vertex within distance $m$ of
the set $u + J$ is initially unoccupied.
Note that a viable closed vertex is nice.

We use a simple two-stage procedure to
realize the initial configuration. First, choose closed vertices in $\Z^3$ according to a product measure with density $q$. Second, independently choose \df{active} vertices in $\Z^3$ according to a product measure with density $p/(1-q)$. The set of initially occupied vertices are those that are active but not closed.


\begin{lemma}\label{viable-likely} Assume $q\le 1/2$.
Fix a vertex $u\in \Z^3$ and an $\epsilon>0$. Assume
$\delta\le \epsilon/10^5$. Then the probability that
there is no active vertex within $\ell^\infty$ distance $m$ of $u+J$
is at least $1-\epsilon$. Consequently,
\begin{equation}\label{viable-lb}
\P(u\ \text{\rm is viable})\ge 1-\epsilon.
\end{equation}
\end{lemma}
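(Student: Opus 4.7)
The strategy will be a straightforward volume-versus-density estimate, combined with the observation that every initially occupied vertex is active (so absence of active vertices around $u+J$ immediately yields viability).

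I would begin by computing an upper bound on the size of the $\ell^\infty$-$m$-neighborhood of $u+J$. Since $u+J$ consists of three axis-parallel segments of $2M+1$ vertices meeting at $u$, this neighborhood is contained in the union of three rectangular boxes of dimensions $(2M+2m+1)\times(2m+1)\times(2m+1)$, and therefore contains at most
\[
N := 3(2M+2m+1)(2m+1)^2
\]
vertices (overcounting the overlap near $u$ is harmless). I would then substitute $M=36L$ with $L=\floor{\delta/(m^2 p)}$ and observe that for $p$ small enough --- which the paper authorizes at the end of \cref{outline} --- one has $L \ge m$, so that $2M+2m+1 \le 4M = 144L$ and $(2m+1)^2 \le 9 m^2$. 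Multiplying through gives
\[
N \le 3 \cdot 144 \cdot 9 \cdot L m^2 = 3888\, L m^2 \le 3888\, \delta/p.
\]

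Next, I would invoke the two-stage construction that precedes the lemma: active vertices form an independent product measure with density $p' = p/(1-q) \le 2p$, since $q\le 1/2$. Bernoulli's inequality then yields
\[
\P\bigl(\text{no active vertex lies within } \ell^\infty\text{-distance } m \text{ of } u+J\bigr) \ge (1-p')^N \ge 1 - Np' \ge 1 - 7776\,\delta,
\]
which is at least $1-\epsilon$ under the hypothesis $\delta \le \epsilon/10^5$. This gives the first conclusion of the lemma. For \eqref{viable-lb}, I would simply note that an initially occupied vertex is, by definition, active, so the absence of active vertices in the $m$-neighborhood of $u+J$ implies that $u$ is viable; the same lower bound therefore applies to $\P(u\text{ is viable})$.

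There is no real obstacle here: the argument is a routine first-moment bound, and the only mild point of care is verifying that the constant $10^5$ is adequate, which comes down to absorbing the additive $2m+1$ terms into $4M$ --- valid once $p$ is small enough that $L$ dominates $m$. The real work of the section begins with \cref{good-likely}, which must promote this single-vertex estimate to a lower bound on the probability that an entire rescaled subcube contains a nice vertex.
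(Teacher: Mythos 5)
Your argument is correct and takes essentially the same approach as the paper: a first-moment bound of the form $(1-p/(1-q))^{|W|}$ with $W$ the $\ell^\infty$-$m$-neighborhood of $u+J$, using $L\le \delta/(m^2p)$ and $q\le 1/2$ to absorb the constants, followed by the observation that occupied vertices are necessarily active. Your count $3(2M+2m+1)(2m+1)^2$ is in fact slightly sharper than the paper's stated exponent, but the difference is immaterial.
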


\begin{proof}
The argument is a simple estimate,
\begin{equation}\label{viable1}
\begin{aligned}
&\P\bigl(\text{there is no active
vertex within $\ell^\infty$ distance $m$ of $u+J$}\bigr)\\
&\ge \bigl[1-p/(1-q)\bigr]^{3(4M+2m+1)(2m+1)^2}\\
&\ge \exp{\bigl[-150\, m^2Mp/(1-q)\bigr]}\\&\ge\exp{(-10800\,\delta)} ,
\end{aligned}
\end{equation}
provided $p$ is small enough. Thus we can choose
any $\delta<\epsilon/(10800)$ to get the probability in
(\ref{viable1}) at least $1-\epsilon$.
\end{proof}

\begin{lemma}\label{good-likely} Fix any $\epsilon>0$,
and assume $\delta=\epsilon/(16\cdot 10^5)$. Then
there exists a constant $C=C(m,\epsilon)$, such that
$q\ge Cp^3$ implies that
the probability that the box $Q_0$ is good is at least
$1-\epsilon$.
\end{lemma}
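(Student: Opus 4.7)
The plan is to show that each of the eight subcubes of $Q_0$ contains a nice vertex with probability at least $1-\epsilon/8$, after which a union bound yields the result. Fix one subcube $Q$, which has $L^3$ vertices. Using the two-stage procedure, the closed vertices are independent of the active vertices; I would expose the actives first and denote by $V$ the resulting random set of viable vertices in $\Z^3$. Since the nice vertices in $Q$ are exactly the closed vertices of $V\cap Q$, and closedness is independent of the actives,
\[
\P(\text{no nice vertex in } Q) \;=\; \E\!\left[(1-q)^{|V\cap Q|}\right] \;=\; (1-q)^{L^3}\,\E\!\left[(1-q)^{-K}\right],
\]
where $K := L^3 - |V\cap Q|$ counts non-viable vertices in $Q$.

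For the first factor, the scaling $L^3 q \ge \log(16/\epsilon)$ (which, as sketched in the ``choice of key constants'' subsection, is guaranteed for small enough $p$ once $C$ is a large enough multiple of $m^6\log(1/\epsilon)/\delta^3$) gives $(1-q)^{L^3}\le e^{-qL^3}\le \epsilon/16$. For the second factor, I would use $(1-q)^{-1}\le e^{2q}$ for $q\le 1/2$, so that $(1-q)^{-K}\le e^{2qK}$. A pointwise union bound on the definition of viability gives
\[
K \;\le\; \sum_{u\in Q}\sum_{v\in u+J_m} \ind\{v \text{ active}\} \;\le\; |J_m|\cdot X,
\]
where $J_m := \{w\in\Z^3 : \|w-v\|_\infty\le m \text{ for some }v\in J\}$ and $X$ is the number of active vertices in the Minkowski sum $Q+J_m$. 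Since $X$ has a Binomial distribution, its moment generating function yields
\[
\E\!\left[(1-q)^{-K}\right] \;\le\; \E[e^{2q|J_m|X}] \;\le\; \exp\!\Bigl(\tfrac{p}{1-q}\,|Q+J_m|\,(e^{2q|J_m|}-1)\Bigr).
\]
Using $|J_m|=O(Lm^2)$, $|Q+J_m|=O(L^3)$ (since $M=36L$), and the identity $L p m^2 = O(\delta)$, the exponent is $O(L^3\cdot p\cdot q\, |J_m|) = O(L^4 p m^2 q) = O(\delta L^3 q) = O(\delta\log(1/\epsilon))$, which is $o(1)$ for the chosen $\delta=\epsilon/(16\cdot 10^5)$. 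Hence $\E[(1-q)^{-K}]\le 2$, and combining the two factors gives $\P(\text{no nice vertex in }Q)\le \epsilon/8$.

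The principal obstacle is that the viability events at different vertices are positively correlated through their shared dependence on actives in overlapping $J_m$-neighborhoods, which precludes a naive argument in which one searches for a nice vertex via independent trials. The moment-generating-function approach sidesteps this by treating the random number of non-viable vertices as a small multiplicative correction to the main factor $(1-q)^{L^3}$, exploiting that $\delta$ can be taken small enough to dominate the constants arising from the extended region $Q+J_m$, even while $L^3 q$ is merely a logarithm of $1/\epsilon$. Pinning down the numerical constants so that the stated $C=(16\cdot 10^5 m^2/\epsilon)^3\log(16/\epsilon)$ suffices is then a bookkeeping exercise.
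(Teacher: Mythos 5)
Your overall strategy (reduce to a single subcube, use the closed/active two-stage decomposition, and control the number of vertices spoiled by nearby active vertices) is sound, but the final estimates contain a genuine gap: they implicitly treat $q$ as if it were of order $p^3$, while the hypothesis is only the one-sided bound $q\ge Cp^3$, and the lemma is invoked later (in \cref{success prob}) for the whole range $q\in[Cp^3,1/2]$. Concretely: (a) bounding the exponent by $O(L^3\,p\,q\,|J_m|)$ uses $e^{2q|J_m|}-1=O(q|J_m|)$, which requires $q|J_m|=O(1)$; since $|J_m|\asymp Lm^2\asymp \delta/p$, this fails as soon as $q$ is much larger than $p$, and then the moment generating function bound blows up. (b) The last step, $O(\delta L^3 q)=O(\delta\log(1/\epsilon))$, uses $qL^3=O(\log(16/\epsilon))$, but the hypothesis only gives the reverse inequality; already for $q=p^2$ one has $qL^3\to\infty$ and your bound on $\E\bigl[(1-q)^{-K}\bigr]$ is useless, even though the main factor is then tiny. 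Both defects are repairable: either first reduce to $q=Cp^3$ exactly via a monotone coupling (raising $q$ at fixed $p$ converts empty vertices to closed ones, which can only create nice vertices, so the goodness probability is nondecreasing in $q$), or compare the correction to $qL^3$ itself --- when $q|J_m|\le 1$ the product is at most $\exp\bigl(-qL^3(1-O(\delta))\bigr)$, which suffices after adjusting $C$ by a constant factor --- and handle larger $q$ separately (e.g.\ by Markov's inequality: $\E K=O(\delta L^3)$, so $K\le L^3/2$ with probability $1-O(\delta)$, and then $(1-q)^{L^3/2}\le\epsilon/16$). As written, the proof does not establish the lemma on its stated range of $q$. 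A smaller point: the identity $\P(\text{no nice vertex in }Q)=\E\bigl[(1-q)^{|V\cap Q|}\bigr]$ is not exact if $V$ is the set of viable vertices, since viability (absence of \emph{occupied} vertices near $u+J$) depends on the closed field as well as on the actives; you should take $V$ to be the set of $u$ with no \emph{active} vertex within $\ell^\infty$ distance $m$ of $u+J$, which is determined by the actives alone and yields the inequality ``$\le$'', which is all you need (your bound on $K$ is already phrased in terms of actives).

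For comparison, the correlation between viability events that you describe as the principal obstacle is sidestepped much more simply in the paper: since $qL^3$ can be made large (this is where $q\ge Cp^3$ enters), the subcube contains at least one closed vertex with probability at least $1-\epsilon/16$; selecting one such vertex $u_c$ measurably with respect to the closed field, the event that no active vertex lies within distance $m$ of $u_c+J$ depends only on the independent active field and has probability at least $1-\epsilon/16$ by \cref{viable-likely}. Only a single viability event is ever examined, so no moment-generating-function or correlation control is needed; your annealed computation, once repaired as above, is a legitimate but more laborious alternative.
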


\begin{proof}
Let $Q'_0$ be the $(+,+,+)$-subcube of $Q_0$. Let $G$ be the event that $Q'_0$ contains a closed vertex.
Provided that $G$ occurs, use any deterministic strategy to
select a closed vertex $u_c\in Q'_0$. By Lemma~\ref{viable-likely},
conditioned on $G$, the probability that there is no
active vertex within distance $m$ of $u_c+J$ is
at least $1-\epsilon/16$, and absence of active vertices implies
absence of occupied vertices. Then
\begin{equation}\label{goodness1}
\begin{aligned}
\P(\text{there is a nice vertex in }Q_0')&\ge
\P(G)\cdot P(u_c\text{ is viable}\mid G) \\
&\ge \left[1-(1-q)^{L^3}\right] \cdot(1-\epsilon/16) \\
&\ge \left[1-\exp(-qL^3)\right]\cdot(1-\epsilon/16)\\
&\ge \left[1-\exp(-(q/p^3)\cdot \delta^3/m^6)\right] \cdot (1-\epsilon/16).
\end{aligned}
\end{equation}

Now choose $C$ large enough
so that the first factor on the last line of
(\ref{goodness1}) exceeds $1-\epsilon/16$. Then (\ref{goodness1})
implies
\begin{equation}\label{goodness2}
\begin{aligned}
\P(\text{there is a nice vertex in }Q_0')\ge 1-\epsilon/8.
\end{aligned}
\end{equation}
Finally, (\ref{goodness2}), symmetry, and
the union bound finish the proof.
\end{proof}


\section{Construction of the stegosaurus}\label{defn of Z}

In this section we construct a set $Z\subset \Z^3$, called the \df{stegosaurus}, which is our candidate for the set satisfying the assumptions of \cref{super-sneaky}. In the next section we will show that this set does, indeed, satisfy these assumptions.

Suppose that there exists a shell $S$ of radius $n$
so that $Q_x$ is good for every $x\in S$. We first fix a set $U\subset \Z^3$, consisting of zero, one or
two nice vertices in each cube $Q_x$ for $x\in S$.
For each $x \in [1,\infty)^3 \cap S$ such that at
least two coordinates of $x$ are at least 4, choose a
nice vertex in the $(-,-,-)$-subcube of $Q_x$.
For each $x \in \{0\} \times [4,\infty)^2 \cap S$
choose a nice vertex in the $(+, +, +)$-subcube of $Q_x$
and another nice vertex in the $(-, +,+)$-subcube of $Q_x$.
We choose nice vertices analogously in the other octants
and coordinate planes. If at least two coordinates of
$x\in S$ are less than 4, we do not choose any nice vertices
from $Q_x$. Let $U$ be the set of all
chosen nice vertices.

A \df{keystone} is a cube of side length
$20L+1$, all eight of whose corners are nice. We now suppose that
there is a keystone centered at each of the six vertices $(\pm n(2L+1),0,0), (0,\pm n(2L+1), 0), (0,0,\pm n(2L+1))$.
Let $K$ be the set of all corner vertices of all keystones ($48$ in all).

We now proceed to define $Z$.
For $u,v \in \Z^3$, let
$B[u,v] = [u_1,v_1]\times [u_2,v_2]\times [u_3, v_3]$
denote a box in $\Z^3$. We will define $Z$ to be the union of certain
boxes, one for each vertex in $U\cup K$.
We define a box $B(u)$ for every $u\in U\cup K$ as follows.
\begin{enumerate}[label=(Z\arabic*)]
\item Suppose $u\in U$ is such that $u\in Q_x $ for a  non-spine
site $x$. Recall that $x$ is protected by $S$, so by
definition it is $a$-protected by some $y_a\in S$, for
each $a$ in
\eqref{eq:protected1}. Such $y_a$ may not be unique;
choose one $y_a$ for each $a$.
Consider two cases.
\begin{itemize}
\item If no  $y_a$ is in the spine, then  let $B(u)=B[0,u]$. (Yellow cuboids in \cref{Zfig}.)
\item If exactly one $y_a$ is in the spine, say, without loss
of generality, with $a=(-3,3,3)$, then let
$v\in U$ be the nice vertex in the $(-,+,+)$-subcube of $Q_{y_a}$.
Take the box $B(u)=B[(v_1,0,0), u]$. (Red cuboids in \cref{Zfig}.)
\end{itemize}
\item For every $u\in U$ such that $u\in Q_x$
where $x$ is on the spine and, say, has first coordinate $0$,
let $B(u)=B[(u_1,0,0),u]$.
Define $B(u)$ similarly if either of the other coordinates of $x$ is $0$. (Green plates in \cref{Zfig}.)
\item For every $u\in K$ (the corner of a keystone),
let $B(u)=B[0,u]$. (Magenta cuboids in \cref{Zfig}.)
\end{enumerate}
Now define the stegosaurus as
\begin{equation}\label{eqn:Zdef}
Z=\bigcup_{u\in U\cup K} B(u).
\end{equation}

\section{Structural properties of the stegosaurus} \label{Z is super-sneaky}

In this section, we will verify that the stegosaurus, $Z$, defined in~\eqref{eqn:Zdef} satisfies the first two assumptions of \cref{super-sneaky}. To do so, we will check that it satisfies the sufficient condition given in \cref{sufficient for sneaky} below.

Throughout this section, in Lemmas \ref{Z is boxes}--\ref{Z cap}, we assume that a shell of good boxes, $S$, of radius $n$ exists, as well as keystones centered at each of the six vertices $(\pm n(2L+1),0,0), (0,\pm n(2L+1), 0), (0,0,\pm n(2L+1))$, and that the sets $U$ and $K$ are given as in the previous section.

Recall that $\nonbrs(u)$ denotes the number of coordinates in which $u$ has a neighbor outside of $Z$. For a set of vertices $B\subset \Z^3$ (which will typically be a box or union of boxes), we say $v$ is a \df{corner}  of $B$ if $v\in B$ has neighbors outside $B$ in all three coordinates, and $v$ is on an \df{edge} of $B$ if $v\in B$ has neighbors outside $B$ in exactly two coordinates. We start with a simple observation that follows from the construction of $Z$.

\begin{lemma}\label{Z is boxes} Suppose $Z$ is defined as in~\eqref{eqn:Zdef}.
\begin{enumerate}[label= \textup{(\roman*)}]
\item If $w\in Z$ has $\nonbrs(w) = 3$, then $w$ is a corner of $B(u)$ for some $u\in U\cup K$.
\item If $w\in Z$ has $\nonbrs(w)=2$, then $w$ is either a corner or on an edge of $B(u)$ for some $u\in U\cup K$.
\end{enumerate}
\end{lemma}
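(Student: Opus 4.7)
The plan is to exploit the fact that every $B(u)$ is an axis-aligned box, so the geometric constraint ``$w$ has a neighbor outside $Z$ in coordinate $i$'' forces $w$ to sit at an extreme face of every box containing it in that coordinate.

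First, since $Z = \bigcup_{u \in U \cup K} B(u)$ and $w \in Z$, there is at least one $u \in U \cup K$ with $w \in B(u)$. Fix any such $B(u) = B[a,b]$. The main elementary observation is: if $\sigma \in \{+1,-1\}$ and $i \in \{1,2,3\}$ are such that $w + \sigma e_i \notin Z$, then in particular $w + \sigma e_i \notin B(u)$. Since $w \in B(u) = B[a,b]$, this forces $w_i = b_i$ when $\sigma = +1$ and $w_i = a_i$ when $\sigma = -1$; that is, $w$ is extremal in $B(u)$ in coordinate $i$ on the $\sigma$ side.

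For part (i), assume $\nonbrs(w) = 3$. Then for each $i \in \{1,2,3\}$ there is a sign $\sigma_i$ with $w + \sigma_i e_i \notin Z$. Applying the observation above to each of the three coordinates, $w$ is extremal in $B(u)$ in all three coordinates, which is precisely the definition of $w$ being a corner of $B(u)$. For part (ii), assume $\nonbrs(w) = 2$, and say coordinates $i, j$ are the ones where $w$ has a neighbor outside $Z$. The same observation shows $w$ is extremal in $B(u)$ in coordinates $i$ and $j$. In the remaining coordinate $k$, either $w$ is also extremal in $B(u)$, in which case $w$ is a corner of $B(u)$, or it is not, in which case $w$ has both of its $\pm e_k$-neighbors inside $B(u)$ and so $w$ has neighbors outside $B(u)$ in exactly the coordinates $i$ and $j$; this is exactly the definition of $w$ lying on an edge of $B(u)$.

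The argument is essentially mechanical and there is no real obstacle; the only point to be careful about is that the conclusion is about a \emph{single} box $B(u)$ for which $w$ plays the right role, and the proof actually yields the stronger statement that \emph{every} $B(u)$ containing $w$ has $w$ as a corner (in case (i)) or as a corner or edge vertex (in case (ii)). No use is made of the precise definitions (Z1)--(Z3) of the boxes $B(u)$ beyond the fact that each is an axis-aligned box contained in $Z$.
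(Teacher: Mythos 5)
Your proof is correct and takes essentially the same approach as the paper's (very terse) argument: since each $B(u)\subseteq Z$ is an axis-aligned box, any neighbor of $w$ lying outside $Z$ in a given coordinate also lies outside $B(u)$, forcing $w$ to be extremal in that coordinate of every box containing it, which is exactly the corner/edge dichotomy. The paper phrases this as an immediate contradiction; you spell out the same observation, including the (true) stronger fact that every $B(u)$ containing $w$ already serves as the required box.
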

\begin{proof}
In the case $\nonbrs(w)=3$, if $w\in B(u)$ is not a corner of $B(u)$, then this gives a contradiction. In the case $\nonbrs(w)=2$, if $w\in B(u)$ is not a corner and not on an edge of $B(u)$, then this gives a contradiction.
\end{proof}

The next lemma states that vertices near the coordinate axes are not corner or edge vertices of $Z$. This is because these vertices are protected by the keystones, in the sense that vertices of $Z$ near the axes lie within the union of cuboids $B(u)$ for $u\in K$. Note that the vertices in the statement of \cref{Z protects axes} exclude those that are corners or edges of the set  $\cup_{u\in K} B(u)$.

\begin{lemma}\label{Z protects axes} Suppose $Z$ is defined as in~\eqref{eqn:Zdef}.
\begin{enumerate}[label= \textup{(\roman*)}]
\item If $w\in Z$ has two coordinates that are \emph{strictly} smaller than $10L$ in absolute value, then $\nonbrs(w)\le 1$.
\item If $w\in Z$ has one coordinate equal to $10L$ in absolute value and one coordinate \emph{strictly} smaller than $10L$ in absolute value and $\|w\|_\infty < (2L+1)n+10L$, then $\nonbrs(w)\le 1$.
\end{enumerate}
\end{lemma}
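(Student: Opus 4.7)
The plan is to exploit the explicit geometry of the ``keystone fingers,'' $\bigcup_{u \in K} B(u)$, which form a plus-sign-shaped subregion of $Z$ along the three coordinate axes. First I would describe this union explicitly. The eight corners of the keystone centered at $(n(2L+1),0,0)$ are the vertices $\bigl(n(2L+1)+\sigma_1\cdot 10L,\,\sigma_2\cdot 10L,\,\sigma_3\cdot 10L\bigr)$ with $\sigma_i\in\{-,+\}$, and by rule (Z3) each such corner $u$ contributes $B(u)=B[0,u]$ to $Z$. Taking the union over all eight sign choices and combining with the contribution of the keystone at $(-n(2L+1),0,0)$ gives the $x$-axis finger
$$F_1 := [-n(2L+1)-10L,\,n(2L+1)+10L]\times[-10L,10L]\times[-10L,10L]\subseteq Z,$$
and symmetrically two further fingers $F_2, F_3$ along the $y$- and $z$-axes.

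Next I would record a uniform bound $\|w\|_\infty\le n(2L+1)+10L$ for every $w\in Z$. For $w\in B(u)$ with $u\in K$ this is built into the keystone definition. For $u\in U$, the vertex $u$ lies in some $Q_x$ with $x\in S$, so (S2) gives $\|u\|_\infty\le(2L+1)n+L$; inspecting the cases (Z1)--(Z2) I would verify that $B(u)$ is contained in $B[0,u]$ (or in $B[(v_1,0,0),u]$ in the red case, with $v$ another nice vertex in some $Q_{y_a}$ satisfying the same bound), so the estimate passes to every vertex of $B(u)$.

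Granting these two facts, both parts are direct case-checks. For (i), by coordinate symmetry I would assume $|w_2|,|w_3|<10L$. Each of the four neighbors $w\pm e_2,\,w\pm e_3$ has its second and third coordinates at most $10L$ in absolute value, while its first coordinate still satisfies the uniform bound, so all four lie in $F_1\subseteq Z$; only $w\pm e_1$ can possibly escape $Z$, and at most one of them does, so $\nonbrs(w)\le 1$. For (ii), say $w_2=10L$ and $|w_3|<10L$. The neighbors $w\pm e_3$ and $w-e_2$ still satisfy the constraints for $F_1$ (using $|w_2-1|=10L-1$), and the added hypothesis $\|w\|_\infty<(2L+1)n+10L$ yields $|w_1\pm 1|\le n(2L+1)+10L$, placing $w\pm e_1$ in $F_1$ as well; only $w+e_2$, with $|w_2+1|=10L+1$, can leave $Z$, so again $\nonbrs(w)\le 1$.

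The main thing to watch will be the uniform bound step: I need to check that the various $B(u)$ boxes defined in (Z1) and (Z2), including the red case where one endpoint is a second nice vertex $v$ in a different $Q_{y_a}$, all remain within $\|\cdot\|_\infty\le n(2L+1)+10L$. Once this is in place, the remainder is elementary bookkeeping against the finger regions, and the extra hypothesis $\|w\|_\infty<(2L+1)n+10L$ in (ii) is precisely what is needed to keep the two $\pm e_1$-neighbors inside $F_1$ when $w$ is no longer doubly-constrained near the axis.
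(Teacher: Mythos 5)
Your proposal is correct and follows essentially the same route as the paper: identify the union of keystone cuboids $\bigcup_{u\in K}B(u)$ as the three axis-parallel ``finger'' regions, use property (S2) of the shell to get the uniform bound $\|v\|_\infty\le (2L+1)n+10L$ for all $v\in Z$, and then check that all neighbors of $w$ except possibly those in one coordinate direction lie inside a finger. The paper's proof is just a terser version of the same case analysis.
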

\begin{proof}
Observe that $\cup_{u\in K} B(u)\subset Z$ contains all vertices $v\in \Z^3$ such that $\|v\|_\infty \le (2L+1)n+10L$ and at least two coordinates of $v$ are smaller than or equal to $10L$ in absolute value. Property \ref{S2} of the shell $S$ implies that $\|v\|_\infty \le (2L+1)n + 10L$ for all $v\in Z$, so if $w\in Z$ has at least two coordinates smaller than or equal to $10L-1$ in absolute value, then $\nonbrs(w)\le 1$ (with equality if and only if $\|w\|_\infty = (2L+1)n + 10L$). This proves part (i). For part (ii), observe that if $w$ satisfies the three conditions given, then $w$ has at most one neighbor outside of $\cup_{u\in K} B(u)$, so $\nonbrs(w)\le 1$.

\end{proof}

The next lemma gives sufficient conditions for $Z$ to satisfy the first two assumptions in~\cref{super-sneaky}, and by~\cref{Z is boxes}, it suffices to verify these conditions for each box comprising $Z$. Essentially, it says that for each $u\in U\cup K$, we need to show that all edges and corners of $B(u)$ that are not sufficiently close to $u$ are hidden within other cuboids or plates.

\begin{lemma}\label{sufficient for sneaky}
Let $Z$ be defined as in~\eqref{eqn:Zdef}. Suppose that for each $u\in U\cup K$, if $w\in B(u)$ is a corner or on an edge of $B(u)$, then either
\begin{enumerate}[label= \textup{(\roman*)}]
\item \label{ss1} $w$ has at least two coordinates strictly smaller than $10L$ in absolute value, or\\
$w$ has one coordinate equal to $10L$ in absolute value and one coordinate strictly smaller than $10L$ in absolute value and $\|w\|_\infty<(2L+1)n+10L$, or
\item \label{ss2} $w=u$, or
\item \label{ss3} $u\in U$ and $w$ is on an edge (not a corner) of $B(u)$ and $w\in u+J$, or
\item \label{ss4} $u\in K$ and $w$ is either a corner or on an edge of $B(u)$ and $w\in u+J$, or
\item \label{ss5} there exists $v \in U$ such that $w\in B(v)$ and $w$ is not a corner and not on an edge of $B(v)$.
\end{enumerate}
Then $Z$ satisfies the first two assumptions of \cref{super-sneaky}.
\end{lemma}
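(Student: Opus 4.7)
My plan is a direct case analysis on which of the five alternatives in the hypothesis applies. Fix $w \in Z$ with $\nonbrs(w) \ge 2$; the goal is to verify both super-sneaky assumptions for this $w$, noting that assumption~(i) of \cref{super-sneaky} is only required when $\nonbrs(w) = 3$. By \cref{Z is boxes}, $w$ is a corner or on an edge of $B(u)$ for some $u \in U \cup K$, so by the hypothesis one of (i)--(v) holds for the pair $(u, w)$.

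Alternatives \ref{ss1} and \ref{ss5} are incompatible with $\nonbrs(w) \ge 2$ and so cannot occur. Indeed, \ref{ss1} yields $\nonbrs(w) \le 1$ immediately from \cref{Z protects axes}; in \ref{ss5}, $w$ lies in $B(v) \subset Z$ as neither a corner nor an edge of $B(v)$, so at most one of its six neighbors sits outside $B(v)$, again forcing $\nonbrs(w) \le 1$.

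Alternative \ref{ss2} is the easiest substantive case: $w = u$, and by the construction of $U$ (for $u \in U$) together with the definition of a keystone (for $u \in K$), $u$ is nice. Niceness gives both that $u$ is closed, verifying super-sneaky~(i), and that every vertex within $\ell^\infty$ distance $m$ of $u + J \ni u$ is initially unoccupied, verifying~(ii). Alternative \ref{ss3} has $u \in U$ nice, $w$ on an edge but not a corner of $B(u)$, and $w \in u + J$; since $w$ is not a corner of $B(u)$ we have $\nonbrs(w) \le \nonbrs_{B(u)}(w) = 2$, making super-sneaky~(i) vacuous, and~(ii) follows from niceness of $u$ combined with $w \in u + J$.

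The main obstacle is alternative \ref{ss4}, where $u \in K$ and $w$ is a corner or edge of $B(u)$ with $w \in u + J$. Super-sneaky~(ii) is again immediate from niceness of $u$ (as a keystone corner) and $w \in u + J$, and if $w$ is only on an edge of $B(u)$ we conclude as in case~\ref{ss3}. In the remaining corner subcase, either $w = u$ (which reduces to case~\ref{ss2}), or $w \ne u$, and I will argue $\nonbrs(w) \le 2$ so that super-sneaky~(i) is once more vacuous. The key geometric point is that $w \in u + J$ with $w$ a corner of $B(u) = B[0, u]$ and $w \ne u$ forces $w$ and $u$ to agree in two coordinates, with the remaining coordinate $j$ satisfying $w_j = 0$ and $|u_j| \le M$. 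The sibling keystone corner $u' := u - 2 u_j e_j$ also lies in $K$, and a direct check shows that $B(u') = B[0, u']$ covers the outward neighbor of $w$ in the $-\mathrm{sign}(u_j) e_j$ direction, leaving at most two of $w$'s three outward neighbors outside $Z$. This gives $\nonbrs(w) \le 2$, completing the case analysis and the proof.
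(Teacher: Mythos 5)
Your proposal is correct and follows essentially the same route as the paper: rule out alternatives \ref{ss1} and \ref{ss5} for any $w$ with $\nonbrs(w)\ge 2$ via \cref{Z protects axes} and the no-corner/no-edge observation, obtain assumption (ii) of \cref{super-sneaky} from niceness of $u$ together with $w\in u+J$, and reduce assumption (i) to showing that $\nonbrs(w)=3$ forces $w$ to be a nice (hence closed) vertex. The only deviation is in the keystone case, where you show $\nonbrs(w)\le 2$ for any corner $w\ne u$ of $B(u)$ lying in $u+J$ by invoking the sibling corner $u-2u_je_j\in K$, whereas the paper observes directly that any point of $\bigcup_{u\in K}B(u)$ with $\nonbrs=3$ must be one of the $24$ outermost keystone corners; these are equivalent renderings of the same geometric fact, and your verification of the sibling-box coverage is sound.
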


\begin{proof}
Suppose $w\in Z$ has $\nonbrs(w)\ge 2$. By \cref{Z is boxes}, $w$ is a corner or on an edge of $B(u)$ for some $u\in U\cup K$, and by \cref{Z protects axes}, $w$ does not satisfy \ref{ss1}. If $w$ satisfies condition \ref{ss5} for some $v\in U$, then $w$ can only have neighbors that are outside of $B(v)\subset Z$ in at most one coordinate, so $\nonbrs(w)\le 1$; this is a contradiction. Therefore, by \ref{ss2}, \ref{ss3} and \ref{ss4}, we have $w\in u+J$. Since $u$ is a nice vertex, there are no initially occupied vertices within distance $m$ of $u+J\ni w$. This verifies that $Z$ satisfies the second assumption in \cref{super-sneaky}. If, in addition, $\nonbrs(w)=3$, then \cref{Z is boxes} implies $w$ is a corner of $B(u)$ for some $u\in U\cup K$.  If $u\in U$, then \ref{ss2} and \ref{ss3} imply that $w=u$. In this case, $w\in U$ is a nice vertex, so $w$ is closed. Suppose now that $u\in K$. In this case, we claim that if $\nonbrs(w)=3$ and $w\in B(u)$, then $w=u$. Indeed, if $w\in \cup_{u\in K} B(u)$ has $\nonbrs(w)=3$, then $w\in K$ (in fact, $w$ must be one of the $24$ outermost corners of the keystones, so $w$ is a permutation of one of the points $(\pm((2L+1)n+10L),\pm10L,\pm10L)$). Furthermore, if $u$ and $u'$ are distinct points in $K$ with $\nonbrs(u)=\nonbrs(u')=3$, then $B(u)\cap B(u')$ is disjoint from $K$. Therefore, if $w\in B(u)$ and $\nonbrs(w)=3$, then $w=u$. This verifies that $Z$ satisfies the first assumption in~\cref{super-sneaky}.
\end{proof}


In the next series of lemmas, we verify the conditions in \cref{sufficient for sneaky} hold for the corner and edge vertices of every box used to construct $Z$. There are four cases for the location of $u\in U\cup K$, and by symmetry of the construction of $S$ and $Z$, we will assume for all statements that $u$ is in the first octant or near one of the positive coordinate planes or axes. The analogous statements (for other octants) hold by permuting coordinates and flipping signs.

We start with cuboids of vertices far from the coordinate axes; this case corresponds to the yellow cuboids in \cref{Zfig}.

\begin{lemma}\label{Z bulk}
Suppose $u\in U$ is such that $u\in Q_x$ with $\min(x_1, x_2, x_3)\ge 4$. Then $u$ satisfies the condition of \cref{sufficient for sneaky}.
\end{lemma}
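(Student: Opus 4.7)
The plan is to analyze the structure of $B(u)$ using the $a$-protection of $x$ by $S$, and then enumerate the corners and edges of $B(u)$, verifying that each satisfies one of the conditions of \cref{sufficient for sneaky}.

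First I would observe that under the assumption $\min(x_1, x_2, x_3) \ge 4$, no protector of $x$ can lie on the spine for any $a \in \{(-3,3,3), (3,-3,3), (3,3,-3)\}$. Inspection of the $a$-protection definition shows that each spine variant (in which one coordinate of $y$ equals zero) forces the corresponding coordinate of $x$ to belong to $[1,3]$, contradicting $\min(x_i) \ge 4$. Hence by construction (Z1) we are in the yellow case and $B(u) = B[0,u]$.

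Next, for each such $a$, let $y^a \in S$ be the corresponding protector (guaranteed by \ref{S3}) and $v^a \in U$ the nice vertex selected in the $(-,-,-)$-subcube of $Q_{y^a}$, so that $B(v^a) = B[0, v^a]$. Combining the $a$-protection definition with the subcube placement of $v^a$ gives explicit inequalities between the coordinates of $u$ and $v^a$. For $a = (3,3,-3)$, for example, $y^a_i - x_i \in [1, 3]$ for $i = 1, 2$ and $y^a_3 - x_3 \in [-3, -1]$, and combined with $v^a_i \in (2L+1)y^a_i + [-L, 0)$ this yields $v^a_1 > u_1$, $v^a_2 > u_2$, and $(2L+1)(x_3 - 3) - L \le v^a_3 \le (2L+1)(x_3-1) - 1$, so in particular $v^a_3 > 0$. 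Analogous inequalities hold for the other two choices of $a$.

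Armed with these cuboids, I would treat the eight corners and twelve edges of $B(u)$ in turn. The origin, the three axis-corners such as $(u_1,0,0)$, and the three axis-edges each have at least two zero coordinates and satisfy condition (i) of \cref{sufficient for sneaky}. The corner $u$ is handled by condition (ii). For each of the three \emph{plane-corners} with one zero coordinate, and for the six non-axis \emph{plane-edges} lying on a coordinate plane, I would choose the appropriate $v^a$; its box $B(v^a)$ contains the point in its interior in the two non-zero directions, while the zero coordinate places the point on the face of $B(v^a)$, so the point is neither a corner nor an edge of $B(v^a)$, and condition (v) applies.

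The most delicate case is that of the three \emph{corner-edges} containing $u$, such as $\{u_1\} \times \{u_2\} \times [0, u_3]$. For $w = (u_1, u_2, t)$ with $t \in [\max(1, u_3 - M), u_3 - 1]$, we have $w - u \in \{0\}^2 \times [-M, -1] \subset J$ and $w$ lies on an edge of $B(u)$, so condition (iii) applies using the niceness of $u$. For the remaining range $t \in [1, u_3 - M - 1]$ (nonempty only when $u_3 \ge M + 2$), I would use $v^{(3,3,-3)}$: the bounds above yield
\[
v^{(3,3,-3)}_3 - (u_3 - M - 1) \ge (2L+1)(x_3-3) - L - \bigl[(2L+1)x_3 - 36L - 2\bigr] = 29L - 1 > 0,
\]
so $(u_1, u_2, t) \in B(v^{(3,3,-3)})$ with all six coordinate neighbors also in $B(v^{(3,3,-3)})$, verifying condition (v). The main obstacle is this stitching of niceness coverage (condition (iii), valid on the axial segment of length $M$ around $u$) with cuboid coverage (condition (v) via $v^{(3,3,-3)}$), which forces the choice $M \gg L$ and is precisely the reason for fixing $M = 36L$ earlier.
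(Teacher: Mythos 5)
Your proposal is correct and follows essentially the same strategy as the paper: establish that all three protectors $y_a$ are non-spine (so $B(u)=B[0,u]$), then partition the corners and edges of $B(u)$ into axial pieces (condition (i)), $u$ itself (condition (ii)), the near-$u$ portion of each corner-edge lying in $u+J$ (condition (iii)), and the remainder covered by $B[0,v^a]\subseteq B(v^a)$ (condition (v)). Your split of each corner-edge at distance $M$ from $u$, and the verification that $v^{(3,3,-3)}_3-(u_3-M-1)\ge 29L-1>0$, is exactly the stitching the paper performs (it bounds $0<u_i-v^a_i<4(2L+1)<M$); your treatment of the spine variants of $a$-protection to rule out spine protectors is also the same observation the paper uses implicitly.
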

\begin{proof}
Since $x$ is far from the spine, it is $a$-protected by a non-spine site $y_a\in S$ for each $a$ in~\eqref{eq:protected1}, so $B(u) = B[0,u]$. We will show that all corner and edge vertices of $B(u)$ satisfy some condition of \cref{sufficient for sneaky}.

First observe that each point in the intervals
\begin{equation} \label{eq:bulk1}
\begin{aligned}
&[(u_1-M)\vee 1,u_1] \times\{u_2\}\times\{u_3\},\\
&\{u_1\} \times[(u_2-M)\vee 1,u_2]\times\{u_3\},\\
\text{and} \quad &\{u_1\}\times \{u_2\}\times [(u_3-M)\vee 1,u_3]
\end{aligned}
\end{equation}
satisfy either condition \ref{ss2} or \ref{ss3} of \cref{sufficient for sneaky}, depending on whether the vertex is a corner or on an edge of $B(u)$ (note that $u$ is the only corner among these vertices).
Taking $a=(-3,3,3)$, we have that there is a $v\in U$ such that $v\in Q_{y_a}$, which implies $v_2>u_2$ and $v_3>u_3$ and $0<u_1-v_1<  4(2L+1) < M$. Since $B[0,v]\subseteq B(v)$, this implies that the intervals 
\begin{equation} \label{eq:bulk2}
\begin{aligned}
&[0,(u_1-M)\vee 1) \times \{u_2\}\times\{u_3\},\\
&\{0\} \times [1,u_2] \times \{u_3\},\\
\text{and} \quad&\{0\} \times \{u_2\} \times [1,u_3]
\end{aligned}
\end{equation}
are contained in $B(v)$, and do not contain any  corner or edge vertices of $B(v)$.  By taking $a$ to be $(3,-3,3)$ or $(3,3,-3)$, similar arguments imply that the intervals
\begin{equation} \label{eq:bulk3}
\begin{aligned}
& \{u_1\} \times [0,(u_2-M)\vee 1) \times\{u_3\}, &  & \{u_1\}  \times\{u_2\}\times [0,(u_3-M)\vee 1),\\
&[1,u_1]  \times \{0\} \times \{u_3\}, & & [1,u_1] \times \{u_2\} \times \{0\}, \\
&\{u_1\} \times \{0\} \times [1,u_3], & \text{and} \qquad& \{u_1\}\times [1,u_2]\times \{0\}
\end{aligned}
\end{equation}
are each contained in $B(v)$ for some $v\in U$ (depending on $a$), and do not contain any  corner or edge vertices of $B(v)$. Therefore, the vertices in~\eqref{eq:bulk2} and~\eqref{eq:bulk3} satisfy condition \ref{ss5} of \cref{sufficient for sneaky}.
Finally, the intervals
\begin{equation} \label{eq:bulk4}
\begin{aligned}
& [0,u_1]\times\{0\}\times\{0\},\\
&\{0\}\times [0,u_2]\times\{0\},\\
\text{and} \quad&\{0\}\times\{0\}\times [0,u_3]
\end{aligned}
\end{equation}
satisfy condition \ref{ss1} of \cref{sufficient for sneaky}. The vertices in~\eqref{eq:bulk1}, \eqref{eq:bulk2}, \eqref{eq:bulk3} and \eqref{eq:bulk4} comprise all of the corner and edge vertices of $B(u)$, so this completes the proof of the lemma.
\end{proof}

The next case is for vertices near enough to the coordinate planes to have their cuboids protected by plates; this case corresponds to the red cuboids in \cref{Zfig}.

\begin{lemma}\label{Z near spine}
Suppose $u\in U$ is such that $u\in Q_x$ and $x_1\in [1,3]$ and $\min(x_2, x_3)\ge 4$. Then $u$ satisfies the condition of \cref{sufficient for sneaky}.
\end{lemma}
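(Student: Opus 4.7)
The plan is to mimic the enumeration strategy of \cref{Z bulk}, checking each corner and edge of $B(u)$ against conditions \ref{ss1}--\ref{ss5}. Two features distinguish the ``near spine'' setting. First, since $x_1\in[1,3]$, there is the a priori bound $u_1\le(2L+1)\cdot 3-1\le 6L+2<10L$, so every vertex of $B(u)$ has first coordinate strictly below $10L$ in absolute value. Second, the protector $y_a$ for $a=(-3,3,3)$ may be forced to lie on the spine, in which case the second bullet of (Z1) gives $B(u)=[v_1,u_1]\times[0,u_2]\times[0,u_3]$ extending past the coordinate plane to $v_1\in(-L,0]$, with the companion plate $B(v)=\{v_1\}\times[0,v_2]\times[0,v_3]\subset Z$ available as a protector; if instead the chosen $y_a$ is non-spine, then $B(u)=B[0,u]$ and the argument simplifies to a restriction of the one below.

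First I would dispose of the easy cases by condition \ref{ss1}. Every corner of $B(u)$ other than $(v_1,u_2,u_3)$ and $u$ itself has two coordinates either equal to $0$ or of absolute value at most $\max(L,u_1)<10L$; every edge of $B(u)$ whose second or third coordinate is $0$ has the same property throughout its length. The corner $u$ is handled by \ref{ss2}, and the long edge $[v_1,u_1]\times\{u_2\}\times\{u_3\}$ lies entirely inside $u+J$ because $u_1-v_1\le 7L+3<M$, so its non-corner vertices satisfy \ref{ss3}.

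The interesting step is to handle the corner $(v_1,u_2,u_3)$ together with the four edges that have nontrivial second and third coordinates, namely $\{v_1\}\times[0,u_2]\times\{u_3\}$, $\{v_1\}\times\{u_2\}\times[0,u_3]$, $\{u_1\}\times[0,u_2]\times\{u_3\}$, and $\{u_1\}\times\{u_2\}\times[0,u_3]$. For the two spine-side edges and the corner $(v_1,u_2,u_3)$ I would use the plate $B(v)$: from the definition of $a$-protection and the $(-,+,+)$-subcube placement of $v$, one deduces $v_2>u_2$ and $v_3>u_3$, so each such vertex is strictly interior to $B(v)$ in both of its nontrivial coordinates and satisfies \ref{ss5}. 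For the two $u_1$-side edges I would invoke the non-spine protectors $v',v''\in U$ supplied by the directions $a=(3,-3,3)$ and $a=(3,3,-3)$: the cuboids $B(v')=B[0,v']$ and $B(v'')=B[0,v'']$ cover each such edge from the coordinate-plane side and satisfy \ref{ss5} there, while the remaining portion near $u$ lies in $u+J$ and satisfies \ref{ss3}.

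The main technical obstacle I anticipate is showing that the two protections for the $u_1$-side edges meet, i.e., that $v'_2\ge u_2-M$ (and symmetrically for the other edge). Starting from $y_{(3,-3,3)}-x\in(0,3]\times[-3,0)\times(0,3]$ one gets $v'_2\ge(2L+1)(x_2-3)-L$, while $u_2\le(2L+1)x_2-1$, so the gap $v'_2-(u_2-M)$ is at least $29L-2>0$ thanks to $M=36L$. All remaining verifications reduce to straightforward coordinate inequalities in the spirit of \cref{Z bulk}, so I expect the write-up to closely parallel that proof, with the one genuinely new ingredient being the use of the spine-plate $B(v)$ to protect the three spine-side features of $B(u)$.
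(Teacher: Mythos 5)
Your proof is correct, and it takes a mildly but genuinely different route from the paper's. In the paper, the four edges $[1,u_1]\times\{0\}\times\{u_3\}$, $[1,u_1]\times\{u_2\}\times\{0\}$, $\{u_1\}\times\{0\}\times[1,u_3]$, and $\{u_1\}\times[1,u_2]\times\{0\}$ are handled via condition~\ref{ss5} using the non-spine protectors $B(v_a)$, exactly mirroring the enumeration in \cref{Z bulk}; and it splits the axis-edges at $0$ so that only the piece near the spine is assigned to condition~\ref{ss1}. You instead exploit the hypothesis $x_1\in[1,3]$, which forces every first coordinate of $B(u)$ to lie in $[v_1,u_1]\subset(-L,6L+2]\subset(-10L,10L)$, so that \emph{every} corner and edge of $B(u)$ with a zero second or third coordinate is already dispatched by condition~\ref{ss1} with no protector needed. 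The only truly non-trivial checks in your version are the spine-side plate $B(v)$ (handled the same way the paper does, via $v_2>u_2$ and $v_3>u_3$), and the two $u_1$-side edges via $B(v')$ and $B(v'')$ together with $u+J$, where your arithmetic $v'_2-(u_2-M)\ge 29L-2>0$ confirms the overlap. The net effect is a shorter case analysis at the cost of being less parallel to \cref{Z bulk}; both are complete. Two cosmetic slips: $v_1\in[-L,0)$, not $(-L,0]$, and $u_1-v_1\le 7L+2$ rather than $7L+3$, but neither affects any inequality you use. You are correct that the sub-case where all three $y_a$ are non-spine (so $B(u)=B[0,u]$) reduces to the \cref{Z bulk} argument; note that this sub-case is not quite a ``restriction of the one below,'' since the corner $(0,u_2,u_3)$ then needs the non-spine protector $v_{(-3,3,3)}$ rather than condition~\ref{ss1}, but the paper too simply defers this sub-case back to \cref{Z bulk}.
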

\begin{proof}
If $x$ is $a$-protected by a non-spine site $y_a\in S$ for each $a$ in~\eqref{eq:protected1}, then $B(u) = B[0,u]$, and the proof that $u$ satisfies the condition of \cref{sufficient for sneaky} is identical to the proof of \cref{Z bulk}. Otherwise, $x$ is $(-3,3,3)$-protected by the spine site $y = (0, y_2, y_3)\in S$ (recall that in this case we allow $y_2=x_2$ or $y_3=x_3$), and $x$ is $a$-protected by the (non-spine) sites $y_a\in S$ for $a= (3,-3,3)$ and $a=(3,3,-3)$. Since $x$ is a non-spine site in the positive octant, we have that $u$ is in the $(-,-,-)$-subcube of $Q_x$. Since $y \in \{0\} \times [4,\infty)^2 \cap S$, there exists $v\in U$ such that $v$ is in the $(-,+,+)$-subcube of $Q_y$, and we have $B(u) = B[(v_1, 0,0),u]$ with $-L\le v_1<0$.  We will show that all corner and edge vertices of $B(u)$ satisfy some condition of \cref{sufficient for sneaky}.

First, since $0<u_1 - v_1 < 4(2L+1) < M$, the vertices in the intervals
\begin{equation} \label{eq:nearspine1}
\begin{aligned}
&[v_1+1,u_1] \times\{u_2\}\times\{u_3\},\\
&\{u_1\} \times[(u_2-M)\vee 1,u_2]\times\{u_3\},\\
\text{and} \quad&\{u_1\}\times \{u_2\}\times [(u_3-M)\vee 1,u_3]
\end{aligned}
\end{equation}
each satisfy condition \ref{ss2} or \ref{ss3} of \cref{sufficient for sneaky}. Now observe that since $v\in Q_y$ and $y$ is on the spine, we have $B(v) = B[(v_1, 0, 0), v]$.  Furthermore, since $v$ is in the $(-,+,+)$-subcube of $Q_y$ and $u$ is in the $(-,-,-)$-subcube of $Q_x$, we have $v_2> u_2$ and $v_3>u_3$, so the intervals
\begin{equation} \label{eq:nearspine2}
\begin{aligned}
&\{v_1\} \times [0,u_2] \times\{u_3\},\\
\text{and} \quad&\{v_1\} \times\{u_2\} \times [0,u_3]
\end{aligned}
\end{equation}
are contained in $B(v)$, and do not contain any corner or edge vertices of $B(v)$. Therefore, these vertices satisfy condition \ref{ss5} of \cref{sufficient for sneaky}.

 Next, let $a=(3,-3,3)$, and recall $y_a$ is not on the spine. Therefore, taking $v_a\in Q_{y_a} \cap U$, we have $B[0,v_a]\subseteq B(v_a)$, and $v_{a1}>u_1$ and $v_{a3}>u_3$ and $0< u_2 - v_{a2} < 4(2L+1)<M$. These inequalities (and the analogous argument for $a=(3,3,-3)$) imply that the intervals
\begin{equation} \label{eq:nearspine3}
\begin{aligned}
& \{u_1\} \times [0,(u_2-M)\vee 1) \times\{u_3\}, &  & \{u_1\}  \times\{u_2\}\times [0,(u_3-M)\vee 1),\\
&[1,u_1]  \times \{0\} \times \{u_3\}, & & [1,u_1] \times \{u_2\} \times \{0\}, \\
&\{u_1\} \times \{0\} \times [1,u_3], & \text{and} \qquad& \{u_1\}\times [1,u_2]\times \{0\}
\end{aligned}
\end{equation}
are each contained in $B(v_a)$ (for the respective value of $a$), and do not contain any  corner or edge vertices of $B(v_a)$. Therefore, the vertices in \eqref{eq:nearspine3} satisfy condition \ref{ss5} of \cref{sufficient for sneaky}.

Finally, the intervals
\begin{equation} \label{eq:nearspine4}
\begin{aligned}
& [v_1,u_1]\times\{0\}\times\{0\}, & & \\
&\{v_1\}\times [0,u_2]\times\{0\}, & & [v_1, 0] \times \{u_2\} \times \{0\}, \\
&\{v_1\}\times\{0\}\times [0,u_3], &\quad  \text{and} \qquad &  [v_1, 0] \times \{0\} \times \{u_3\}
\end{aligned}
\end{equation}
consist entirely of vertices having at least two coordinates smaller than or equal to $L$ in absolute value, and therefore satisfy condition \ref{ss1} of \cref{sufficient for sneaky}. Note that the two intervals on the right in~\eqref{eq:nearspine4} extend below the $yz$-coordinate plane; this compensates for the fact that the non-spine cuboids providing protection to $B(u)$ may not extend (far enough) below the $yz$-coordinate plane (their contribution is in the middle row of \eqref{eq:nearspine3}). The vertices in \eqref{eq:nearspine1}, \eqref{eq:nearspine2}, \eqref{eq:nearspine3} and \eqref{eq:nearspine4} comprise all of the corner and edge vertices of $B(u)$, so this completes the proof.
\end{proof}

The next lemma addresses plates, depicted in green in \cref{Zfig}.

\begin{lemma}\label{Z spine}
Suppose $u\in U$ is such that $u$ is in the $(+,+,+)$-subcube of $Q_x$ with $x = (0,x_2, x_3)$ and $\min(x_2, x_3)\ge 4$.  Then $u$ satisfies the condition of \cref{sufficient for sneaky}.
\end{lemma}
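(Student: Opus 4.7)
The plan is to verify the condition of \cref{sufficient for sneaky} at each corner and each edge vertex of the plate $B(u) = \{u_1\} \times [0, u_2] \times [0, u_3]$, using the shell's protection property for $x$ together with the $U$-construction of nice vertices. Since $u$ lies in the $(+,+,+)$-subcube of $Q_x$, we have $u_1 \in (0, L]$, $u_2 \in ((2L+1)x_2, (2L+1)x_2 + L]$, and $u_3 \in ((2L+1)x_3, (2L+1)x_3 + L]$; in particular $u_1 \le L < 10L$.

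First I would dispose of the easy cases. The corner $u$ itself satisfies \ref{ss2}. The other three corners $(u_1,0,0)$, $(u_1,u_2,0)$, $(u_1,0,u_3)$ each have two coordinates with absolute value strictly below $10L$ (namely $u_1$ together with a zero coordinate), so they satisfy \ref{ss1}. Every vertex on the edges $\{u_1\}\times\{0\}\times[0,u_3]$ and $\{u_1\}\times[0,u_2]\times\{0\}$ likewise has $u_1$ and a zero as two small coordinates, so \ref{ss1} applies there as well.

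The nontrivial work lies in the remaining two edges; by the symmetry between the second and third coordinates it suffices to handle the far-$z$ edge $\{u_1\}\times[1,u_2-1]\times\{u_3\}$. I split on the size of $u_2$. When $u_2 \le M$, each edge vertex satisfies $|y - u_2| \le u_2 \le M$ and thus lies on the $y$-axis-parallel segment of $u+J$ through $u$, giving \ref{ss3}. When $u_2 > M = 36L$, the relation $u_2 \le (2L+1)x_2 + L$ forces $x_2 \ge 12$ (for any $L \ge 1$). Applying property \ref{S3} to $x$ in direction $a = (3,-3,3)$, and noting that bullets (2)--(4) of the $a$-protected definition are all excluded by $x_1 = 0$ and $x_2, x_3 \ge 4$, I obtain a site $y_a \in S$ with $y_a \in [1,3]\times[x_2-3,x_2-1]\times[x_3+1,x_3+3]$. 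The lower bound $x_2 \ge 12$ gives $y_{a2} \ge 9 \ge 4$, and $y_{a3} \ge 5 \ge 4$, so $y_a$ has at least two coordinates at least $4$, and the $U$-construction selects a nice vertex $v_a$ in the $(-,-,-)$-subcube of $Q_{y_a}$. Calculations parallel to those in \cref{Z near spine} yield $v_{a1} > u_1 > 0$, $v_{a3} > u_3$, and $2L+2 \le u_2 - v_{a2} \le 8L+2 \le M$. Since $B[0,v_a] \subseteq B(v_a)$, the vertices $(u_1, y, u_3)$ with $y \in [1, v_{a2} - 1]$ sit strictly inside $B(v_a)$ in all three coordinates and so satisfy \ref{ss5}; the vertices with $y \in [v_{a2}, u_2 - 1]$ lie within distance $u_2 - v_{a2} \le M$ of $u$ along the $y$-line through $u$, so they lie in $u + J$ and satisfy \ref{ss3}.

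The main obstacle, and the reason the argument is more delicate than \cref{Z bulk} or \cref{Z near spine}, is that the plate is only two-dimensional: every vertex automatically has $\nonbrs \ge 1$, so the full two-dimensional boundary must be accounted for, and the shell protection never furnishes a single cuboid simultaneously reaching a far corner of the plate as an interior point in all three coordinate directions. The bookkeeping around the threshold $u_2 \approx M$ (and symmetrically $u_3 \approx M$) is the step I expect to be most delicate, since it couples the scales $L$, $M = 36L$, and the maximum offset $8L+2$ from shell protection in a way that must be tight enough to leave no uncovered sub-interval on either far edge.
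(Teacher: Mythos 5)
Your proof is correct and follows essentially the same route as the paper's: cover the near-axis edges and corners via \ref{ss1}, the tip $u$ via \ref{ss2}, and handle the two far edges by combining \ref{ss3} (within $\ell^\infty$ distance $M$ of $u$ along the axis) with \ref{ss5} (protection via the non-spine cuboid $B(v_a)$ obtained from the $(3,-3,3)$ and $(3,3,-3)$ protecting sites). Your explicit case split on $u_2\le M$ versus $u_2>M$ is a mild bookkeeping improvement -- it sidesteps the fact that $Q_{y_a}\cap U$ can be empty when $x_2$ is small, a case the paper glosses over since then the relevant interval $[0,(u_2-M)\vee 1)$ is anyway vacuous; also your upper bound $u_2-v_{a2}\le 8L+2$ should read $8L+3$, which is immaterial.
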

\begin{proof}
Since $x$ is on the spine, we have that $B(u) = B((u_1, 0, 0), u)$, and that $x$ is $a$-protected by $y_a\in S$ for $a=(3,-3,3)$ and $a=(3,3,-3)$. (We note that $x$ is also $(-3,-3,3)$-protected and $(-3,3,-3)$-protected by sites in $S$, and these sites are needed to protect the $(-,+,+)$-subcube of $Q_x$, but are not needed here. Also, note that in this case $B(u)$ is a rectangle, not a cuboid, so only has four edges and corners.) First, observe that each vertex in the intervals
\begin{equation} \label{eq:spine1}
\begin{aligned}
&\{u_1\} \times[(u_2-M)\vee 1,u_2]\times\{u_3\},\\
\text{and} \quad&\{u_1\}\times \{u_2\}\times [(u_3-M)\vee 1,u_3]
\end{aligned}
\end{equation}
satisfy either conditions \ref{ss2} or \ref{ss3} of \cref{sufficient for sneaky}, depending on whether it is a corner or on an edge of $B(u)$.
Now, let $a = (3,-3,3)$, and observe that $y_a$ is not on the spine. This follows from the definition of $a$-protected, and the fact that $(y_{a})_2\ge x_2-3>0$ and $(y_{a})_3\ge x_3>0$, so $(y_{a})_1>x_1=0$. Therefore, taking $v_a\in Q_{y_a} \cap U$, we have $B[0,v_a]\subseteq B(v_a)$, and $(v_{a})_1>u_1$ and $(v_{a})_3>u_3$ and $0< u_2 - (v_{a})_2 < 4(2L+1)<M$. These inequalities (and the analogous argument for $a=(3,3,-3)$) imply that the intervals
\begin{equation} \label{eq:spine2}
\begin{aligned}
& \{u_1\} \times [0,(u_2-M)\vee 1) \times\{u_3\}, &  & \{u_1\}  \times\{u_2\}\times [0,(u_3-M)\vee 1),\\
&\{u_1\} \times \{0\} \times [1,u_3], & \text{and} \qquad& \{u_1\}\times [1,u_2]\times \{0\}
\end{aligned}
\end{equation}
are contained in $B(v_a)$ (for the respective value of $a$), and do not intersect any corner or edge vertices of $B(v_a)$. Therefore, these vertices satisfy condition \ref{ss5} of \cref{sufficient for sneaky}. Finally, the vertices in the intervals
\begin{equation} \label{eq:spine3}
\begin{aligned}
&\{u_1\}\times [0,u_2]\times\{0\},\\
\text{and} \quad&\{u_1\}\times\{0\}\times [0,u_3]
\end{aligned}
\end{equation}
have at least two coordinates with absolute values smaller than or equal to $L$, and therefore satisfy condition \ref{ss1} of \cref{sufficient for sneaky}. The vertices in \eqref{eq:spine1}, \eqref{eq:spine2} and \eqref{eq:spine3} comprise all of the corner and edge vertices of $B(u)$, so this completes the proof.
\end{proof}

Finally, we show that the keystones (magenta in \cref{Zfig}) are protected by the rest of $Z$.

\begin{lemma}\label{Z cap}
Suppose $u\in K$. If $p$ is small enough (depending on $\delta$ and $m$) such that $L>12$, then $u$ satisfies the conditions of \cref{sufficient for sneaky}.
\end{lemma}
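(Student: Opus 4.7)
My plan is to verify the condition of \cref{sufficient for sneaky} by a direct case analysis on the eight corners and twelve axis-parallel edges of $B(u) = B[0, u]$. By the symmetry conventions I may restrict to the case where $u$ is a corner of the keystone centered at $(n(2L+1), 0, 0)$ with $u_2 = u_3 = 10L$, so that $u_1 \in \{(2L+1)n - 10L, (2L+1)n + 10L\}$ and $B(u) = [0, u_1] \times [0, 10L] \times [0, 10L]$ (reflecting if necessary to ensure $u_1 > 0$).

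Most corners and edges will be dispatched immediately. A vertex of $B(u)$ with two zero coordinates satisfies the first alternative of \ref{ss1}; a vertex where one coordinate equals $10L$, another is strictly smaller, and $\|w\|_\infty < (2L+1)n + 10L$ is handled by the second alternative. The three axis-parallel edges of $B(u)$ meeting $u$ will be covered (at least near $u$) by \ref{ss4}: the $y$- and $z$-directed edges have length $10L < M = 36L$ and so lie entirely in $u + J$, and the portion of the $x$-directed edge within distance $M$ of $u$ also lies in $u + J$. The corner $w = u$ itself satisfies \ref{ss2}.

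The hard part will be the remainder of the top-top $x$-edge, namely $w = (t, 10L, 10L)$ with $10L < t \le u_1 - M - 1$, which is nonempty only when $u$ is sufficiently far from the origin. For such $w$, no coordinate is strictly smaller than $10L$, so \ref{ss1} fails, and $|t - u_1| > M$, so \ref{ss4} fails. I will invoke \ref{ss5} with a single protector nice vertex for the entire problematic range: property \ref{S1} puts the site $x^\star = (n-12, 6, 6)$ in $S$, and in the regime where the problematic interval is nonempty one has $n - 12 \ge 6$, so $x^\star$ is a non-spine site with all coordinates at least $4$ and none of its three protecting sites on the spine. Rule \textup{(Z1)} then gives $B(v^\star) = B[0, v^\star]$ for the nice vertex $v^\star \in U$ in the $(-,-,-)$-subcube of $Q_{x^\star}$, with $v^\star_2, v^\star_3 \ge 11L + 6 > 10L$ and $v^\star_1 \ge (2L+1)n - 25L - 12$. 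Comparing to the worst case $t \le (2L+1)n - 26L - 1$ at an outer keystone corner, the excess $v^\star_1 - t \ge L - 11$ is strictly positive under the hypothesis $L > 12$; hence $w$ lies strictly in the interior of $B(v^\star)$, verifying \ref{ss5}. When the problematic interval is empty, the borderline vertex $(10L, 10L, 10L)$ is close enough to $u$ along the $x$-axis that $w - u \in J$, so \ref{ss4} suffices; symmetry extends the argument to all $48$ elements of $K$.
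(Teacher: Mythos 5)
Your proposal follows essentially the same route as the paper: reduce by symmetry to a keystone corner with $u_2=u_3=10L$, dispatch most corners and edges via \ref{ss1} and \ref{ss4}, and cover the long $x$-directed edge via \ref{ss5} using the site $(n-12,6,6)\in S$ guaranteed by property~\ref{S1} and the resulting nice vertex $v^\star$ in its $(-,-,-)$-subcube with $v^\star_1\ge(2L+1)n-25L-12$ and $v^\star_2,v^\star_3\ge 11L+6$. The arithmetic giving $v^\star_1-t\ge L-11$ for the outer corner and the use of the hypothesis $L>12$ matches the paper. (The paper streamlines the inner-corner case by noting $B(u')\subset B(u)$ for the inner corner $u'$, which is a small efficiency gain over checking both $u_1=(2L+1)n\pm10L$.)

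One small slip: you declare the problematic range to be $10L<t\le u_1-M-1$ and relegate $w=(10L,10L,10L)$ to a scenario ``when the problematic interval is empty,'' invoking \ref{ss4}. But in the actual parameter regime ($n\ge n_0\approx p^{-292}$, so $u_1\gg 46L$) that interval is never empty and $|10L-u_1|>M$, so \ref{ss4} does not apply to $(10L,10L,10L)$. Meanwhile $(10L,10L,10L)$ still has no coordinate strictly less than $10L$, so your \ref{ss1} step also fails for it. The fix is trivial and does not change anything: your range for \ref{ss5} should read $10L\le t\le u_1-M-1$, and the identical argument with $v^\star$ covers $t=10L$ (since $10L+1\le 11L+6\le v^\star_2,v^\star_3$ and $10L+1\le v^\star_1$). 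The paper avoids this hiccup by including $t=10L$ in display \eqref{eq:keystone2}.
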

\begin{proof}
By symmetry, we may assume $u = ((2L+1)n+10L, 10L, 10L)$. Observe that the point $u' = ((2L+1)n-10L, 10L, 10L)\in K$ has $B(u')\subset B(u)$, so it suffices to consider the outer corner, $u$, of the keystone. Here (in the first line below) is where we use the full radius of the set $u+J$, which is $M = 36L$. Observe that the vertices in the intervals
\begin{equation}\label{eq:keystone1}
\begin{aligned}
&[(2L+1)n - 26L, (2L+1)n+10L] \times \{10L\} \times \{10L\},\\
&\{(2L+1)n+10L\} \times [0,10L] \times \{10L\}, \\
\text{and} \quad&\{(2L+1)n+10L\} \times \{10L\} \times [0,10L]
\end{aligned}
\end{equation}
all satisfy condition \ref{ss4} of \cref{sufficient for sneaky}. Now, by property \ref{S1} of the shell $S$, the site $y = (n-12,6,6)$ is in $S$, so there exists $v\in Q_y \cap U$. Since $y$ is far from the spine (all coordinates are at least $4$), we have $B(v)=B[0,v]$. Furthermore, for $p$ small enough such that $L>12$,
\[
v_1\ge (2L+1)(n-12)-L = (2L+1)n - (25L+12) \ge (2L+1)n - 26L,
\]
and $v_2 \ge 6(2L+1)-L \ge 11L$ and $v_3 \ge 11L$. Therefore, all of the vertices in the intervals
\begin{equation}\label{eq:keystone2}
\begin{aligned}
&[0,(2L+1)n - 26L] \times \{10L\} \times \{10L\},\\
& \{0\} \times [1,10L] \times \{10L\},\\
\text{and} \quad&\{0\}  \times \{10L\} \times [1,10L]
\end{aligned}
\end{equation}
are contained in $B(v)$ and do not intersect any corner or edge vertices of $B(v)$. Therefore, the vertices in~\eqref{eq:keystone2} satisfy condition \ref{ss5} of \cref{sufficient for sneaky}. Finally, all of the vertices in the intervals
\begin{equation}\label{eq:keystone3}
\begin{aligned}
&[0, (2L+1)n+10L) \times \{0\} \times \{10L\}, &\qquad & \{(2L+1)n+10L\} \times [0,10L) \times \{0\}, \\
&[0, (2L+1)n+10L) \times \{10L\} \times \{0\}, && \{(2L+1)n+10L\} \times \{0\}\times [0,10L) ,\\
&[0, (2L+1)n+10L] \times \{0\} \times \{0\}, &&\{0\} \times [0,10L] \times \{0\}, \\
\text{and} \quad& \{0\} \times \{0\}\times [0,10L] &&
\end{aligned}
\end{equation}
satisfy condition \ref{ss1} of \cref{sufficient for sneaky}.  The vertices in~\eqref{eq:keystone1}, \eqref{eq:keystone2} and~\eqref{eq:keystone3} comprise all of the corner and edge vertices in $B(u)$, so this completes the proof.
\end{proof}

\section{Putting it all together}\label{all together}
In this section, we put together the pieces from previous sections to prove the existence of a set $Z$ satisfying \cref{super-sneaky} with probability tending to $1$ as $p\to 0$. This entails identifying a shell of radius $n$ for some $n$ and adding keystones. However, since all $6$ keystones appear with probability about $q^{48}$, we will need to construct polynomially many (in $1/p$) shells before finding one that can successfully be adorned with keystones to complete the construction of $Z$. It then remains to check that this $Z$ satisfies the third condition of \cref{super-sneaky}, which states that threshold $\thr=2$ modified bootstrap percolation, restricted to $Z$ and without closed vertices, does very little. We start with this verification, then move on to actually identifying $Z$.

\subsection{Threshold 2 bootstrap percolation}
\label{threshold-2-bp}
In the next lemma, we assume that $q=0$, so the initial state has no closed vertices. We also suppose that the dynamics follow the threshold $r=2$ modified bootstrap rule~\eqref{modified} \df{internal} to the box $[-N,N]^3$, meaning we set all vertices outside of $[-N,N]^3$ to be initially (and forever) empty. By monotonicity, any vertex left unoccupied in the final configuration by these dynamics will also be left unoccupied by the threshold $r\ge 2$ modified bootstrap dynamics internal to $Z\subset [-N,N]^3$ with $q\ge 0$. In what follows, we say a set $R\subset \Z^3$ is \df{internally spanned} if the set $R$ is fully occupied in the final configuration by the threshold $r=2$ modified bootstrap dynamics internal to $R$.

\begin{lemma}\label{threshold 2}
Set $q=0$. Fix an integer $s>0$, and let $N=\lfloor p^{-s}\rfloor$.
Suppose the dynamics follow the modified threshold $r=2$ bootstrap rule~\eqref{modified} {internal} to $[-N,N]^3$.
Then all connected clusters (maximal connected sets) of occupied vertices in the final configuration
are cuboids.
Furthermore, with probability converging to $1$
as $p\to 0$, the final configuration has the following
two properties: all fully occupied
cuboids have side lengths at most $6s$, and the origin is not occupied.
\end{lemma}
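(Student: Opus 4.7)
The lemma has two parts: a deterministic claim about cluster shape and a probabilistic claim about cluster size and the status of the origin.

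For the deterministic cuboid structure, my plan is to show that every configuration $F$ fixed under the modified threshold-$2$ rule in $\Z^3$ has each connected component equal to an axis-aligned cuboid.  The crux is a ``rectangle completion'' observation: if $x, y \in F$ lie in the same connected component with $y - x = \epsilon_i e_i + \epsilon_j e_j$ for distinct $i, j \in \{1,2,3\}$ and $\epsilon_i, \epsilon_j \in \{\pm 1\}$, then each of $x + \epsilon_i e_i$ and $x + \epsilon_j e_j$ has occupied neighbors in two distinct coordinate directions (namely $i$ and $j$) and so lies in $F$ by fixedness.  Iterating this completion, together with a short analysis showing that pairs of vertices at axis-parallel distance $2$ have their midpoint forced into $F$ indirectly through any path in the component, proves that any connected component equals its bounding cuboid.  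Equivalently: if a component $C$ has some vertex $z \in R \setminus C$ in its bounding cuboid $R$, one locates a ``concave'' vertex in $R \setminus C$ with occupied neighbors in at least two coordinate directions, contradicting fixedness.

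The probabilistic argument rests on the slab estimate: for a cuboid $R$ of dimensions $a \times b \times c$ with $c = \max(a,b,c)$,
\[
\P(R \text{ is internally spanned}) \le (abp)^c.
\]
This holds because every one of the $c$ direction-$3$ slabs of $R$ must contain at least one initial seed; otherwise no vertex of that seed-free slab can ever be filled in the restricted dynamics, since an in-slab vertex has all direction-$1$ and direction-$2$ neighbors in the same slab (empty by induction on time) and so has occupied neighbors only possibly in direction $3$, a single coordinate direction.  The slab-seed events are independent across the $c$ disjoint slabs, each of probability at most $abp$.  I then invoke an Aizenman--Lebowitz-type argument: if any cluster in the final configuration has maximum side $\ge L$, then for every $\ell \in [1, L]$ there is an internally spanned cuboid of maximum side in $[\ell, K\ell]$ for an absolute constant $K$.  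The proof runs the sub-dynamics inside the large cluster's bounding cuboid and identifies the first moment $T_\ell$ at which a sub-cluster attains maximum side $\ge \ell$; since a single newly-filled vertex can merge at most a bounded number of prior sub-clusters (all of max side $< \ell$), the new sub-cluster has max side at most $K\ell$, and its bounding cuboid under the restricted dynamics settles into a cuboid component containing it.  Setting $\ell = L_0 := 6s + 1$ and union-bounding the slab estimate over the $O(N^3 L_0^3)$ candidate cuboids in $[-N, N]^3$ yields
\[
\P\bigl(\exists\, \text{cluster of max side} \ge L_0\bigr) \;\le\; O\bigl(N^3 L_0^3 \cdot (L_0^2 p)^{L_0}\bigr) \;=\; O(p^{3s+1}) \;\longrightarrow\; 0.
\]

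For the origin conclusion, on the (high-probability) event that every cluster has maximum side at most $L_0$, the origin being eventually occupied forces some initial seed to lie within $\ell^\infty$-distance $L_0$ of the origin; this event has probability at most $(2L_0+1)^3\, p = O(p) \to 0$.  The main obstacle is the Aizenman--Lebowitz reduction to intermediate scales: the naive slab bound $(abp)^c$ is vacuous once $c \gg p^{-1/2}$, so one cannot directly union-bound over all large cuboids, and the AL step is needed to extract a witness cuboid of moderate size from the cluster's merging history.  Making this rigorous requires a bound on the number of prior sub-clusters that a single step of modified threshold~$2$ can merge (at most six, since the newly-filled vertex has at most two occupied neighbors per direction and at most three active directions), which is where the detailed structure of the rule enters.
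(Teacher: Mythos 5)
Your proposal is correct and follows essentially the same route as the paper: the cuboid structure of final clusters from the modified rule, an Aizenman--Lebowitz reduction to an internally spanned cuboid of intermediate size (which the paper simply cites to \cite{AL}), the cross-section/slab seed estimate $(abp)^c$, a union bound over the polynomially many candidate cuboids in $[-N,N]^3$, and a nearby-seed bound for the origin. The only difference is that you sketch proofs of the deterministic cuboid claim and of the AL-type lemma, which the paper leaves to the rule and to the reference, respectively.
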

\begin{proof}
The first claim follows from the bootstrap rule.
To demonstrate the second claim, fix an integer
$k>0$. Let $E_k$ be the
event that the
final configuration contains an occupied cuboid whose longest
side has length at least $k$. If $E_k$ occurs, $[-N,N]^3$
contains an
internally spanned cuboid $R$ whose longest side length
is in the interval $[k/2,k]$ \cite{AL}. Then, any plane
perpendicular to the longest
side of $R$ that intersects $R$ must contain an
occupied vertex within $R$. There are at most $(2N+1)^3 k^3$ possible
selections of the cuboid $R$. Therefore,
\begin{equation}\label{threshold2-1}
\P(E_k)\le 10\,N^3 k^3 (k^2p)^{k/2}= 10\,k^{k+3}p^{(k-6s)/2}.
\end{equation}
Furthermore,
\begin{equation}\label{threshold2-2}
\begin{aligned}
&\P(\text{the origin is occupied in the final configuration})\\
&\le \P\bigl([-k,k]^3\text{ contains an initially occupied
vertex}\bigr)+\P(E_k)\\
&\le (2k+1)^3p+\P(E_k).
\end{aligned}
\end{equation}
If $k>6s$, then the probabilities in (\ref{threshold2-1})
and (\ref{threshold2-2}) both go to $0$ as $p\to 0$.
\end{proof}

\subsection{Existence of $Z$} \label{existence-Z}
Let $T = \floor{p^{-146}/10}$, and define a sequence of integers $(n_0, \ldots, n_T)$ by $n_0 = \floor{p^{-292}}$ and $n_T = 2n_0$ and for $k=1, \ldots, T-1$,
\begin{equation}\label{eq:akdef}
n_k = n_0 + k \floor{5\sqrt{n_T}}.
\end{equation}

For each $k$, we will attempt to find a shell $S$ of radius $n_k$ within the annulus
\[
\mathcal{A}_k = \left\{x\in\Z^3 : n_k \le |x| \le n_k + 3\sqrt{n_k}\right\}
\]
such that $Q_x$ is a good box for every $x\in S$; let $\mathtt{Shell}(k)$ denote the event that there is such a shell. Simultaneously, for each $k$, we also attempt to find keystones (boxes with side lengths $20L+1$, all of whose corners are nice) centered at the six vertices $(\pm n_k(2L+1),0,0), (0,\pm n_k(2L+1), 0), (0,0,\pm n_k(2L+1))$. Let $K_k$ be the set of corner vertices of these $6$ (potential) keystones, so $|K_k| = 48$, and let $\mathtt{Keystones}(k)$ be the event that all of the vertices in $K_k$ are nice. We declare the $k^\text{th}$ attempt to be successful if we find both the shell of radius $n_k$ and the keystones at all six locations, and we let $F_k = \mathtt{Shell}(k) \cap \mathtt{Keystones}(k)$ denote the event that the $k^\text{th}$ attempt is successful. First, we show that the annuli $(\mathcal{A}_k)_{k\ge 0}$ are spaced sufficiently far apart.

\begin{lemma}\label{ak bound}
For the sequence $(n_0, \ldots, n_T)$ defined in~\eqref{eq:akdef} and $p<1/2$, we have
\[
n_0 \le n_{k+1} \le n_T=2n_0  \qquad \text{and} \qquad n_{k+1} - (n_k + 3\sqrt{n_k}) \ge 10^3
\]
for all $0\le k\le T-1$.
\end{lemma}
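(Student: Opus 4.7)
The lemma is a direct calculation from the definitions, so my plan is to organize it around a few numerical estimates and then verify the two inequalities. I would begin by recording the basic size bounds that drive everything, which follow immediately from $n_0=\lfloor p^{-292}\rfloor$, $n_T=2n_0$, and $T=\lfloor p^{-146}/10\rfloor$. For $p<1/2$, so that $p^{-292}$ is astronomical, these give
\[
p^{-292}-1\le n_0\le p^{-292},\qquad p^{-292}\le n_T\le 2p^{-292},
\]
hence $p^{-146}\le \sqrt{n_T}\le \sqrt{2}\,p^{-146}$, and therefore
\[
5p^{-146}-1\le \lfloor 5\sqrt{n_T}\rfloor\le 5\sqrt{2}\,p^{-146},\qquad T\le p^{-146}/10.
\]

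For the first claim $n_0\le n_{k+1}\le n_T$, the lower bound is immediate because $(n_k)$ is non-decreasing by construction. For the upper bound, the case $k+1=T$ is a tautology; for $k+1\le T-1$ I would estimate
\[
n_{k+1}-n_0=(k+1)\lfloor 5\sqrt{n_T}\rfloor \le T\cdot 5\sqrt{2}\,p^{-146}\le \tfrac{\sqrt{2}}{2}\,p^{-292},
\]
which is strictly less than $n_0$ because $\tfrac{\sqrt{2}}{2}<1$ and $n_0\ge p^{-292}-1$, so $n_{k+1}<2n_0=n_T$.

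For the gap estimate I would split on whether $k\le T-2$ or $k=T-1$. In the generic case $k\le T-2$ the increment is $\lfloor 5\sqrt{n_T}\rfloor$, and since $n_k\le n_T$ by what was just shown,
\[
n_{k+1}-n_k-3\sqrt{n_k}\ge \lfloor 5\sqrt{n_T}\rfloor-3\sqrt{n_T}\ge 5\sqrt{n_T}-1-3\sqrt{n_T}=2\sqrt{n_T}-1\ge 2p^{-146}-1,
\]
which for $p<1/2$ is at least $2\cdot 2^{146}-1$, vastly exceeding $10^3$. In the edge case $k=T-1$ the increment is not $\lfloor 5\sqrt{n_T}\rfloor$ but the leftover
\[
n_T-n_{T-1}=n_0-(T-1)\lfloor 5\sqrt{n_T}\rfloor\ge n_0-\tfrac{\sqrt{2}}{2}\,p^{-292}\ge \bigl(1-\tfrac{\sqrt{2}}{2}\bigr)p^{-292}-1,
\]
which dominates $3\sqrt{n_{T-1}}\le 3\sqrt{2}\,p^{-146}$ by a factor of $p^{-146}$ and easily beats $10^3$.

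The only mild subtlety, and thus the only thing I would call an obstacle, is remembering to handle $k=T-1$ separately, since the last increment is the leftover quantity rather than $\lfloor 5\sqrt{n_T}\rfloor$. Fortunately that leftover is of order $p^{-292}$, so the estimate is even more comfortable than in the generic case.
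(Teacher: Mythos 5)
Your proof is correct and follows essentially the same calculation as the paper's. The paper's version is more terse: it bounds $n_k \le n_0 + T\cdot 5\sqrt{2n_0}$ for the upper bound and simply asserts $\lfloor 5\sqrt{n_T}\rfloor - 3\sqrt{n_k} \ge 2\sqrt{n_T}-1 \ge 10^3$ for the gap bound, leaving implicit the fact that the final increment $n_T - n_{T-1}$ is at least $\lfloor 5\sqrt{n_T}\rfloor$ (which holds because $n_0 \ge T\lfloor 5\sqrt{n_T}\rfloor$, as your upper-bound computation shows). You surface this edge case explicitly and verify it, which is a reasonable thing to do and matches the spirit of the paper's argument.
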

\begin{proof}
The first lower bound $n_k\ge n_0$ is obvious. For the upper bound,
\[
n_k \le n_0 + T\cdot 5\sqrt{2n_0} \le n_0 + (p^{-146}/10)\cdot (5\sqrt{2} p^{-146}),
\]
which is smaller than $2n_0$ for $p<1/2$. The second bound follows from $\floor{5\sqrt{n_T}} - 3\sqrt{n_k} \ge 2\sqrt{n_T}-1 \ge 10^3$ for $p<1/2$.
\end{proof}

Now we are ready to prove the following lemma, which is the key to finding the set~$Z$.

\begin{lemma}\label{success prob}
There exist $\delta>0$ and $C>0$ such that if $q\in [Cp^3,1/2]$, then for all sufficiently small $p$ the events $\{F_k\}_{k\ge 0}$ are independent and $\P(F_k) \geq p^{145}$ for every $k\ge 0$.
\end{lemma}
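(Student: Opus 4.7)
The plan is to establish independence of the family $\{F_k\}$ via spatial separation of their supports, and to bound each $\P(F_k)$ from below by combining the shell probability (of constant order) with the keystone probability (of order $p^{144}$) using the FKG inequality.

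I will first handle independence. The event $F_k$ is measurable with respect to the initial configuration inside a thickened annulus around $\ell^1$-radius $n_k(2L+1)$ in $\Z^3$, of radial width $3\sqrt{n_k}(2L+1) + O(L)$; the $O(L)$ term absorbs both the range $M+m$ of niceness queries and the $20L$-diameter of the six keystones at the poles. By \cref{ak bound}, consecutive renormalized annuli are separated by at least $10^3$, i.e.\ by $10^3(2L+1)$ in the original lattice, which dwarfs $O(L)$ for small $p$. Hence distinct $F_k$ are determined by disjoint sets of vertices and are mutually independent.

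For the quantitative bound, I will use the two-stage realization of the initial configuration: first pick the closed field with density $q$, then independently pick the active field with density $p/(1-q)$. With respect to this product structure, both $\mathtt{Shell}(k)$ and $\mathtt{Keystones}(k)$ are monotone --- increasing in closedness (adding closed vertices creates more candidates for niceness, and the keystones require 48 specific closings) and decreasing in activity (removing active vertices makes viability easier, and the keystones require no active vertices in $N_m(K_k+J)$). The shell event is an existential claim over subsets of good boxes, so it inherits monotonicity from individual goodness. Applying FKG separately to each of the two independent fields gives $\P(F_k) \ge \P(\mathtt{Shell}(k))\cdot\P(\mathtt{Keystones}(k))$.

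Each factor is then bounded directly. For the shell, \cref{good-likely} supplies $\P(Q_x\text{ is good}) \ge 1-\epsilon$ once $\delta$ and $C$ are chosen appropriately. Goodness of $Q_x$ depends only on the configuration within $\ell^\infty$-distance $M+m = O(L)$ of $Q_x$, so the goodness field has bounded range of dependence in renormalized coordinates. The Liggett--Schonmann--Stacey theorem \cite{LSS} then shows that, for $\epsilon$ small enough, the goodness field stochastically dominates an i.i.d.\ Bernoulli field of density $b_1$ (from \cref{shell}), yielding $\P(\mathtt{Shell}(k)) \ge 3/4$. For the keystones, the 48 independent closedness events contribute at least $q^{48} \ge C^{48} p^{144}$, while the probability that $N_m(K_k+J)$ --- a region of volume $O(m^2 M) = O(\delta/p)$ --- contains no active vertex is bounded below by $\exp(-O(\delta))$ by the estimate in \cref{viable-likely}. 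Independence of the two fields gives $\P(\mathtt{Keystones}(k)) \ge c\,p^{144}$ for some constant $c>0$, and hence $\P(F_k) \ge \tfrac34 c\,p^{144} \ge p^{145}$ for all sufficiently small $p$. The main obstacle I anticipate is coordinating the constants cleanly: $\epsilon$ must first be chosen small enough for LSS-domination above the \cref{shell} threshold $b_1$; this fixes $\delta$ and $C$ via \cref{good-likely}; and the final product $\tfrac34 c\,p^{144}$ must still dominate $p^{145}$. A secondary subtlety is the FKG invocation itself, which must simultaneously accommodate the two different monotonicities (in closedness and in non-activity) and the existential form of $\mathtt{Shell}(k)$.
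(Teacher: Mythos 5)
Your proposal is correct and follows essentially the same route as the paper: spatial separation of the annuli for independence, LSS-domination to get $\P(\mathtt{Shell}(k))\ge 3/4$, a direct $q^{48}$-plus-no-active-vertices estimate for $\P(\mathtt{Keystones}(k))\ge c\,p^{144}$, and FKG to combine. The one cosmetic difference is the FKG framing — the paper uses a single three-valued ordering $(\text{occupied},\text{empty},\text{closed})=(-1,0,+1)$ on states rather than your two separate monotonicities on the closed and active fields — but these are equivalent and the argument is sound.
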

\begin{proof}
For each $k\ge 0$, let
\[
\mathcal{B}_k = \Bigl\{u\in \Z^3 : (n_k - 100)(2L+1) \le |u| \le (n_k + 3\sqrt{n_k} + 100)(2L+1)
\Bigr\},
\]
and note that the sets $\mathcal{B}_k$ for $k\ge 0$ are disjoint (for small enough $p$) by the second inequality in \cref{ak bound}. By the definition of a good box, for each $x\in \Z^3$, the event that $Q_x$ is a good box depends only on the states (occupied, closed or empty) of vertices in the set
\begin{equation}\label{eq:dependence}
Q_x + [-M-m,M+m]^3 \subset (2L+1)x + [-40L,40L]^3,
\end{equation}
where the containment holds for small enough $p$, recalling $M=36L$. Therefore, since $(2L+1)\mathcal{A}_k + [-40L,40L]^3 \subset \mathcal{B}_k$, the event $\mathtt{Shell}(k)$ depends only on the states of the vertices in $\mathcal{B}_k$. Also, for each $k$, the event $\mathtt{Keystones}(k)$ depends only on the states of vertices in
\begin{align*}
&K_k + [-M-m,M+m]^3 \subset\\
&\Bigl\{u\in \Z^3 : n_k(2L+1) - 3(10L+M+m) \le |u| \le n_k(2L+1) + 3(10L+M+m)\Bigr\},
\end{align*}
which is a subset of $\mathcal{B}_k$ for all small enough $p$. Therefore, for each $k$, the event $F_k$ depends only on the states of vertices in $\mathcal{B}_k$, so the events $\{F_k\}_{k\ge 0}$ are independent.

If we paint each site $x\in \Z^3$ black if $Q_x$ is a good box, and white otherwise, then the argument surrounding~\eqref{eq:dependence} shows that this coloring forms a $120$-dependent random field. Let $b_1$ be the constant from \cref{shell}. By~\cite{LSS}, if $\P(Q_x \text{ is good})\ge 1-\epsilon$ for all $x\in \Z^3$ and $\epsilon>0$ is sufficiently small, then there exists $b>b_1$ such that this random coloring stochastically dominates a product measure with density $b$ of black sites. Therefore, choosing such an $\epsilon\in (0,1/50)$ and letting $\delta = \epsilon/(16\cdot 10^5)$, \cref{good-likely} implies the existence of $C>0$ such that $\P(Q_x \text{ is good})\ge 1-\epsilon$ whenever $q\ge Cp^3$, and \cref{shell} implies that $\P(\mathtt{Shell}(k))\ge 3/4$ for every $k\ge 0$.

To produce the keystones in the $k^{\text{th}}$ annulus, as in Section~\ref{sec:good boxes}, we use a two-stage procedure to
realize the initial state: we choose closed vertices
through a product measure with density $q$ and
independently choose active
vertices through a product measure with density $p/(1-q)$,
then declare a vertex initially occupied if it is active but not closed.  By \cref{viable-likely}, for each $u\in K_k$, the probability that there are no active vertices in $u+J + [-m,m]^3$ is at least $1-\epsilon>49/50$. Therefore, the probability that there are no active vertices in $K_k+J+ [-m,m]^3$ is at least $2/50$. Now, independently, the vertices in $K_k$ are all closed with probability $q^{48} \ge C^{48} p^{144}$, so
\[
\P(\mathtt{Keystones}(k)) \ge (C^{48}/25) p^{144}
\]
for every $k\ge 0$.

Finally, if we identify the initial states of the vertices as $(\text{occupied}, \text{empty}, \text{closed}) = (-1,0,+1)$, then both $\mathtt{Shell}(k)$ and $\mathtt{Keystones}(k)$ are increasing events. Indeed, if $\mathtt{Shell}(k)$ occurs, then there is a shell $S$ of radius $n_k$ contained in $\mathcal{A}_k$ such that $x\in S$ implies $Q_x$ is good. By flipping the states of some vertices from occupied to empty or closed, or from empty to closed, we cannot change a good box to a bad box, so we cannot destroy the shell $S$. Likewise, if $\mathtt{Keystones}(k)$ occurs, we cannot alter its occurrence by flipping vertices from occupied to empty or closed, or from empty to closed.  Therefore, by the FKG inequality we have
\[
\P(F_k) \ge \P(\mathtt{Shell}(k)) \cdot \P(\mathtt{Keystones}(k)) \ge  (3C^{48}/100) p^{144},
\]
which is at least $p^{145}$ for sufficiently small $p$.
\end{proof}

We can now prove existence of the set $Z$ in the desired region.

\begin{lemma}\label{Z exists}
Let $\delta>0$ and $C>0$ be chosen as in \cref{success prob}, and let $N_0 = \floor{L/3}\floor{p^{-292}}$  If $q\in [Cp^3,1/2]$, then with probability converging to $1$ as $p\to 0$ there exists a set $Z$ such that the initial configuration on $Z$ satisfies the three assumptions of \cref{super-sneaky}, and such that $[-N_0,N_0]^3 \subset Z \subset [-22N_0,22N_0]^3$.
\end{lemma}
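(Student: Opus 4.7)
My plan is to assemble the ingredients built up in earlier sections: \cref{success prob} to find one successful attempt among many, the construction of Section~\ref{defn of Z} to define $Z$, the lemmas of Section~\ref{Z is super-sneaky} to verify assumptions (i) and (ii) of \cref{super-sneaky}, and \cref{threshold 2} for assumption (iii).

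First I would handle the probability that at least one attempt succeeds. By \cref{success prob}, the events $\{F_k\}_{k=0}^{T-1}$ are independent and each has probability at least $p^{145}$, so the probability that none succeeds is at most $(1-p^{145})^{T} \le \exp(-T p^{145}) \le \exp(-p^{-1}/10)$, which tends to $0$ as $p\to 0$. On the event that at least one succeeds, let $k^*$ be the smallest $k$ with $F_{k^*}$; this gives a shell $S$ of radius $n_{k^*} \in [n_0, n_T]$ with all $Q_x$ good for $x \in S$, together with keystones at the six required vertices. Then build $Z$ via the construction in Section~\ref{defn of Z} and the set $U\cup K$ it prescribes, yielding $Z$ via~\eqref{eqn:Zdef}.

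Next I would verify the three conditions of \cref{super-sneaky} on this event. Conditions (i) and (ii) are essentially deterministic consequences of the construction: every $u\in U\cup K$ falls under one of the four cases addressed by Lemmas~\ref{Z bulk}, \ref{Z near spine}, \ref{Z spine}, and \ref{Z cap}, each of which shows that the corner and edge vertices of $B(u)$ satisfy one of the sufficient clauses of \cref{sufficient for sneaky}; that proposition then converts this into conditions (i) and (ii) of \cref{super-sneaky}. For (iii), the plan is to pick constants $s=300$ and $m = 12s = 3600$. Then for small $p$ the outer size bound (proved next) gives $Z\subset [-p^{-s},p^{-s}]^3$, so by monotonicity the threshold $r=2$ modified dynamics on $Z$ with $Z^c$ closed are dominated by the threshold $r=2$ modified dynamics internal to $[-p^{-s},p^{-s}]^3$ with no closed vertices; \cref{threshold 2} then ensures, with probability tending to $1$, that every final cluster is a cuboid of side lengths at most $6s=m/2$, hence of $\ell^\infty$-diameter at most $m/2$.

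Finally I would check the two size containments. For the outer bound, each $u\in U$ lies in some $Q_x$ with $x\in S$, so $\|u\|_\infty \le (2L+1)\|x\|_\infty + L \le (2L+1)n_T + L$, and each $u\in K$ has $\|u\|_\infty\le (2L+1)n_T + 10L$; since each $B(u)$ is contained in the cube of side $2\|u\|_\infty$ around the origin, $Z\subset [-((2L+1)(2n_0)+10L),\,(2L+1)(2n_0)+10L]^3$, which is inside $[-22N_0,22N_0]^3$ once $p$ is small (using $N_0\ge (L/3-1)n_0$). For the inner bound, by property~\ref{S4} there exists $k_+\ge n_{k^*}/3\ge n_0/3$ with $k_+(1,1,1)\in S$; since $k_+\ge 4$ for small $p$, every protector of this site is non-spine, so the nice vertex $u\in U$ in the $(-,-,-)$-subcube of $Q_{k_+(1,1,1)}$ is of the type handled by \cref{Z bulk}, giving $B(u)=B[0,u]$ with each $u_i\ge (2L+1)k_+ - L \ge 2Ln_0/3$; this exceeds $N_0\le (L/3)n_0$, so $[0,N_0]^3\subset B(u)\subset Z$, and symmetry over octants yields $[-N_0,N_0]^3\subset Z$.

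The main obstacle is less any single computation than the interlocking choice of constants: $s$ must be large enough that the finite set $Z$ fits in $[-p^{-s}, p^{-s}]^3$, $m$ must be at least $12s$ so that \cref{threshold 2} delivers clusters of diameter at most $m/2$, and the annulus parameters $n_0$ and $T$ must be such that the per-attempt success $\ge p^{145}$ from \cref{success prob} combined with $T\gtrsim p^{-146}$ trials comfortably overwhelms the $q^{48}\sim p^{144}$ cost of the keystones, all while the final size of $Z$ remains polynomial in $p^{-1}$ with the right constant $22$. Once these are set (e.g.\ $s=300$, $m=3600$), all pieces fit together and nothing more than the monotonicity and symmetry arguments above is needed.
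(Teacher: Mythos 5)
Your proposal is correct and follows essentially the same route as the paper: bound $\P(\cap_k F_k^c)$ using the independence and lower bound from \cref{success prob}, build $Z$ from the successful shell and keystones, invoke \cref{sufficient for sneaky,Z bulk,Z near spine,Z spine,Z cap} for the first two assumptions of \cref{super-sneaky}, apply \cref{threshold 2} with $s=300$, $m=12s$ (via monotonicity) for the third, and get the outer bound from \ref{S2} plus \cref{ak bound} and the inner bound from \ref{S4} and the diagonal nice vertices. The only differences are cosmetic (slightly different arithmetic in the containment estimates and a more explicit justification that the diagonal sites' protectors are non-spine), so nothing further is needed.
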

\begin{proof}
By \cref{success prob}, recalling $T = \floor{p^{-146}/10}$,
\[
\P\Bigl(\cap_{k=0}^{T-1} F_k^c\Bigr) \leq (1-p^{145})^{T-1} \le
\exp\bigl(-p^{-1}/10 + 2\bigr) \to 0 \quad \text{as $p\to 0$}.
\]
Therefore, with high probability the events $\mathtt{Shell}(k)$ and $\mathtt{Keystones}(k)$ occur for some $k\le T-1$. Given a shell, $S$, of radius $n_k$ comprised of good boxes and keystones centered at the six appropriate vertices, we can define $Z$ as in~\eqref{eqn:Zdef}. By \cref{sufficient for sneaky,Z bulk,Z near spine,Z spine,Z cap} and symmetry, this set $Z$ satisfies the first two assumptions of \cref{super-sneaky}. By \cref{ak bound}, we have $n_k + 3\sqrt{n_k} \le n_T = 2n_0 = 2\floor{p^{-292}}$. Therefore, for all small enough $p$, if $u\in Z$, then
\[
\|u\|_\infty \le (2L+1) n_T + L \le 7L n_0 \le 22 N_0
\]
This shows $Z\subset [-22N_0,22N_0]^3$. Note that $22N_0 \le  p^{-300}$ for small enough $p$. Therefore, applying \cref{threshold 2} with $s=300$ implies that $Z$ satisfies the third assumption of \cref{super-sneaky}, and that we may take $m = 12s = 3600$.

Property \ref{S4} of the shell $S$ of radius $n_k$ implies that there exists $\ell \ge n_k/3$ such that $(\ell,\ell,\ell)\in S$. Since this site is far from the spine of $S$, there exists a nice vertex $u\in Q_{(\ell,\ell,\ell)}$ so that
\[
\bigl[0,(2L+1)(n_k-1)/3\bigr]^3 \subset B[0,u]\subset Z.
\]
Considering the other seven diagonal directions, symmetry and \cref{ak bound} imply that $[-N_0,N_0]^3\subset Z$, which finishes the proof.
\end{proof}


\subsection{Lack of percolation}\label{lack-of-percolation}

We are now ready to conclude the proof of
Theorem~\ref{three-modified}.

\begin{proof}[Proof of Theorem~\ref{three-modified}]
Part (i) was proved in Section~\ref{low-q},
so we proceed to prove (ii).
To prove that, provided $q>Cp^3$, the
final density goes to $0$,
we apply Lemma~\ref{Z exists}, Proposition~\ref{super-sneaky},
and Lemma~\ref{threshold 2}. The last step is to show that,
almost surely,
$\perc$  does not happen.

Let $N_0$ be as in Lemma~\ref{Z exists},
and $N=\lfloor N_0/2\rfloor$.
We say that $x\in \Z^3$ is \df{$N$-closed}, if
$Z$ satisfies the three assumptions of
Proposition~\ref{super-sneaky} and $Nx+[-N_0,N_0]^3
\subseteq Z\subseteq  Nx+[-22N_0, 22N_0]^3$.
A site is \df{$N$-open} if it is not $N$-closed.
Observe that, for
an $N$-closed site $x$, there is no nearest neighbor path
between  $Nx+[-N,N]^3$ and $(Nx+[-22N_0, 22N_0]^3)^c$,
on eventually occupied vertices (provided $p$ is small enough).

By Lemma~\ref{Z exists} and translation invariance, there are constants $\delta>0$ and $C>0$ so that
for any $\alpha>0$, there exists $p^*>0$ such that
the probability that a fixed
site is $N$-closed is at least $1-\alpha$ for all $p<p^*$.
Observe that $x,y\in \Z^3$
at $\ell^\infty$ distance at least $100$ are $N$-closed
independently. Using \cite{LSS}, we therefore may choose
$p^*$ small enough to guarantee
the event that there is no infinite connected set of
$N$-open sites has probability $1$. This event is a subset of
$\perc^c$, as we will now argue.

Assume $\pi$ is an infinite self-avoiding
nearest-neighbor path of vertices starting at the origin.
If two neighboring
vertices $z_1,z_2\in \Z^3$ satisfy $z_i\in Nx_i+[-N,N]^3$
for $i=1,2$ and $x_1\ne x_2$, then $x_1$ and $x_2$
are also neighbors
in $\Z^3$.
It follows that $\pi$
must enter $Nx+[-N,N]^3$,
for some $N$-closed site $x$. But then $\pi$ must also
exit $Nx+[-22N_0, 22N_0]^3$, and therefore it
cannot consist solely
of eventually occupied vertices. Therefore $\perc$ does
not happen.
\end{proof}

 \section{The standard model and big obstacles}\label{standard model}

In this section, we sketch the proofs of Theorems \ref{three-standard} and \ref{larger obstacles}.
We do not give full details, as the arguments are very similar to
those for Theorem~\ref{three-modified}, and in the case of \cref{three-standard}
we do not obtain a precise value of the critical exponent.

We start with the standard bootstrap percolation with density $q$ of closed vertices and density $p$ of initially occupied vertices, \cref{three-standard}\ref{three-standard-ii}, and the proof for the standard bootstrap percolation with big obstacles, \cref{larger obstacles}\ref{large obstacles-ii}, will be similar.  As already mentioned,
the key difference from the modified bootstrap percolation is that for the standard model the construction of the set $Z$ from
a shell consisting of
nice boxes does not suffice: while such $Z$
still enjoys the same protection
in the bulk, it is vulnerable to the invasion of occupied
vertices near the spine. That is,
a vertex in a plate near a coordinate plane can be penetrated by two occupied
neighbors outside of the plate (and one additional occupied vertex in the plate) so
Lemma~\ref{Z spine} no longer holds.
The plate therefore
needs to be replaced by a cuboid. To protect
the exposed corners of this cuboid, we need two
closed vertices on the same axis-parallel line. This problem, which we are
unable to overcome, necessitates $q$ to be on the order of $p^2$
for the critical probability for existence of a successful stegosaurus.

To make the first new ingredient precise, we say that a box
$Q_x$ is \df{swell} if for each of its eight subcubes, and
for each of the three coordinate directions, there is a
line in that direction that contains two nice vertices in
that subcube. Lemma~\ref{good-likely} is then replaced by
the following.

\begin{lemma}\label{swell-likely} Fix any $\epsilon>0$,
and assume $\delta=\epsilon/10^7$. Then
there exists a large enough constant $C=C(m,\epsilon)$, such that
$q\ge  Cp^2$ implies that
the probability that the box $Q_0$ is swell is at least
$1-\epsilon$.
\end{lemma}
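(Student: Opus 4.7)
My plan is to adapt the proof of Lemma~\ref{good-likely}, replacing single closed vertices in each subcube with \emph{pairs} of closed vertices on a common axis-parallel line. I continue to use the two-stage sampling procedure from Section~\ref{sec:good boxes}: first pick closed vertices with density $q$, then (independently) active vertices with density $p/(1-q)$, declaring a vertex initially occupied iff it is active but not closed. By symmetry across subcubes and directions together with the union bound, it suffices to show that the $(+,+,+)$-subcube $Q_0'$ of $Q_0$ admits, in each of the three coordinate directions, an axis-parallel line carrying two nice vertices, except on an event of probability at most $\epsilon/8$.

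First I analyze the closed-vertex stage. Let $G$ be the event that for each coordinate direction there is some axis-parallel line meeting $Q_0'$ in at least two closed vertices. Fix a direction; the $L^2$ axis-parallel sub-lines of length $L$ in $Q_0'$ parallel to that direction are disjoint, and since $qL=O(p)\to 0$, an elementary expansion of $1-(1-q)^L-Lq(1-q)^{L-1}$ shows that each of them contains at least two closed vertices with probability at least $c L^2 q^2$ for some absolute $c>0$ and all sufficiently small $p$. Hence the probability that $G$ fails for this direction is at most
\[
(1 - c L^2 q^2)^{L^2} \le \exp(-c L^4 q^2).
\]
Substituting $L \ge \delta/(2m^2 p)$ and $q \ge C p^2$ gives $L^4 q^2 \ge C^2 \delta^4/(16 m^8)$, so choosing $C$ large enough (depending on $m$ and $\epsilon$) makes this at most $\epsilon/48$; summing over the three directions yields $\P(G^c) \le \epsilon/16$.

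Next, conditional on $G$, I select a deterministic pair of closed vertices $u^{(i)}_1, u^{(i)}_2$ on a chosen line for each direction $i=1,2,3$, for a total of (at most) six closed vertices, and ask that all six be viable. Since the active-vertex field is independent of the closed-vertex configuration, Lemma~\ref{viable-likely} applies conditionally to each selected vertex; with $\delta = \epsilon/10^7$ it gives viability with conditional probability at least $1 - \epsilon/96$ per vertex (the denominator $10^7$ being chosen precisely so that Lemma~\ref{viable-likely} delivers failure at most $\epsilon/96$). A union bound over the six vertices yields conditional failure probability at most $\epsilon/16$. Combining with $\P(G^c) \le \epsilon/16$ produces the desired bound $1 - \epsilon/8$ for $Q_0'$, and summing over the eight subcubes finishes the proof.

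The principal (indeed only substantive) change from Lemma~\ref{good-likely} lies in the first stage: the scaling balance is now $L^4 q^2 = \Theta(1)$ (two closed vertices on one of $L^2$ disjoint lines) rather than $L^3 q = \Theta(1)$ (one closed vertex in a volume-$L^3$ subcube), which is exactly what forces the threshold to move from $q \asymp p^3$ up to $q \asymp p^2$. Everything else, including the viability bookkeeping and the slightly smaller value of $\delta$, follows mechanically from Lemma~\ref{viable-likely} adjusted for the fact that six vertices, rather than one, must be viable simultaneously.
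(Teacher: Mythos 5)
Your proposal follows essentially the same route as the paper: the two-stage closed/active sampling, conditioning on the existence of a collinear pair of closed vertices in a subcube, then invoking Lemma~\ref{viable-likely} for viability and summing with the union bound, with the $L^4q^2$ scaling appearing exactly where the paper produces $[L^2q/4]^2$. The only genuine substantive difference is how you lower-bound the probability that a fixed line of length $L$ carries two closed vertices --- you expand the binomial directly, whereas the paper splits the subcube into two halves of height $\lfloor L/2\rfloor$ and asks for one closed vertex in each half; the two are interchangeable.

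There is, however, one real gap: you justify the estimate $\P(\text{line has} \ge 2 \text{ closed}) \ge cL^2q^2$ by asserting ``$qL = O(p) \to 0$.'' That is false under the stated hypothesis. The lemma only assumes $q \ge Cp^2$; nothing bounds $q$ above, and with $L \asymp 1/p$ a value such as $q = 1/4$ gives $qL \to \infty$, in which case the expansion you invoke does not deliver $cL^2q^2$ as a lower bound (indeed that quantity would exceed $1$). The paper handles this by opening with ``by monotonicity, we may assume $q \le p^{3/2}$,'' which is what makes $qL \to 0$ legitimate. Monotonicity holds because, in the two-stage coupling, adding closed vertices does not change the active field, so viability is unaffected and the swell event is monotone increasing in $q$. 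You should state this reduction explicitly before using it; with it in hand, $qL \le \delta p^{1/2}/m^2 \to 0$ and the rest of your argument (the constants $\epsilon/48$, $\epsilon/96$, the choice $\delta = \epsilon/10^7$, and the final union bound over subcubes) is correct.
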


\begin{proof}
By monotonicity, we may assume that $q\le p^{3/2}$.
Let $Q_0'$ be the $(+,+,+)$-subcube of $Q_0$, and
let $G_c$ (resp.~$G_n$) be the event that $Q_0'$
contains two closed (resp.~nice) vertices
on the same vertical (i.e., $z$-axis-parallel) line.
Provided that $G_c$ occurs, select such
a pair $(u_c,u_c')$ of closed
vertices by any deterministic strategy. By Lemma~\ref{viable-likely},
conditioned on $G_c$, the probability that there is no
active vertex within $\ell^\infty$ distance $m$ of $(u_c+J)\cup (u_c'+J)$ is
at least $1-\epsilon/48$. Thus,
\begin{equation}\label{swellness1}
\begin{aligned}
\P(G_n)
&\ge
\P(G_c)\cdot \P(u_c\text{ and }u_c'\text{ are both viable}\mid G_c) \\
&\ge \P(G_c)\cdot(1-\epsilon/48).
\end{aligned}
\end{equation}
To get a lower bound on the probability of $G_c$,
divide $Q_0'$ into two halves of height $\lfloor L/2\rfloor$
by a horizontal cut
and then observe $G_c$ will occur if a vertical line has a
closed vertex in each half. Therefore,
\begin{equation}\label{swellness2}
\begin{aligned}
\P(G_c)&\ge 1-\left[1-(1-(1-q)^{\lfloor L/2\rfloor})^2\right]^{L^2}\\
&\ge 1-\exp\left(-[L(1-e^{-qL/3})]^2\right)\\
&\ge 1-\exp\left(-[L^2q/4]^2\right),
\end{aligned}
\end{equation}
for $p$ small enough. Therefore, we can choose
$C$ large enough to make $\P(G_c)\ge 1-\epsilon/48$
and therefore, by (\ref{swellness1}),
$\P(G_n)\ge 1-\epsilon/24$. The union bound
then ends the proof.
\end{proof}

\begin{proof}[Sketch of the proof of Theorem~\ref{three-standard}\ref{three-standard-ii}]
Provided $q>Cp^2$, for a large enough $C$,
Lemma~\ref{swell-likely} now ensures the existence of a
shell with the same properties as in the modified case.

The second difference is the definition of $Z$. Now we add to $U$
additional nice vertices in the boxes corresponding to
the spine of the shell.
Namely, for each $x \in \{0\} \times [4,\infty)^2 \cap S$, we now
choose {\it a pair\/} of nice vertices,
that lie on a line in the $x$-direction,
in the $(+, +, +)$-subcube of $Q_x$
and another such pair in the $(-, +,+)$-subcube of $Q_x$.
(Again, we make analogous choices in the other octants
and coordinate planes.) Selection of other nice vertices is identical.
Every $x$ in the spine now contributes two boxes that are
defined by the collinear pairs and are no longer plates of width $1$.
That is, the second part (Z2) of the construction
is replaced by the following.
\begin{itemize}
\item[(Z2')] For every $u,u'\in U$ such that $u, u'$ are in the same
subcube of $Q_x$
where $x$ is on the spine and, say, has first coordinate $0$,
let $B(u,u')=B[(u_1,0,0),u']$.
Define $B(u,u')$ similarly if either
of the other coordinates of $x$ is $0$.
\end{itemize}

Then the verification of protection properties
in Section~\ref{Z is super-sneaky} is the same, except for the outward
edges of $B(u,u')$, which are of length at most $L$ and
connect two closed vertices.

The final ingredient that is slightly different is
Lemma~\ref{threshold 2}
on the threshold $r=2$ rule, where the bound on the side
length of final cuboids is $12s$ instead of $6s$,
again using the standard argument from \cite{AL}.

With these adjustments, the proof proceeds along the same
lines, and the constants can be chosen in the same order.
\end{proof}

The proof of \cref{larger obstacles} follows nearly the same argument, and we now sketch the proof for this case.
\begin{proof}[Sketch of the proof of \cref{larger obstacles}]
The proof for \cref{larger obstacles}\ref{large obstacles-i} follows the same argument given in \cref{low-q} for \cref{three-modified}\ref{three-modified-i}. It suffices to consider the modified bootstrap percolation dynamics, but now for a site $x\in\Z^3$ to be $N$-open, there must not be any obstacle centers within $\ell^1$ distance $1$ of $Nx+[0,N)^3$. This makes the collection of $N$-open sites a $1$-dependent random field. \cref{theta3-Lopen} still holds in this case, and an application of \cite{LSS} handles the dependence. The rest of the proof is nearly identical to the proof of \cref{three-modified}\ref{three-modified-i}.

For \cref{larger obstacles}\ref{large obstacles-ii} it suffices to consider the standard bootstrap dynamics, and the modifications required in this case are similar to those for \cref{three-standard}\ref{three-standard-ii} given above. Again, plates are no longer impervious to invasion by occupied vertices, and we need two nice vertices on the same axis-parallel line within each box for the construction of $Z$, so we require a shell of swell sites to exist. However, now that obstacle centers appear with density $q$ and make closed each of their six neighboring vertices, the probability that the box $Q_x$ is swell is almost the same as the probability that the box $Q_x$ is good. Therefore, \cref{swell-likely} holds for $q>Cp^3$, and its proof is almost identical to the proof of \cref{good-likely}. The rest of the proof is the same as for \cref{three-standard}\ref{three-standard-ii}.
\end{proof}

\pagebreak
\section{Open Problems} \label{open problems}

We conclude by adding a few open problems to the collection
in \cite{GH}.

\begin{enumerate}
\item Consider
the standard model on $\Z^3$ with
threshold $\thr=3$, and suppose $p,q\to 0$
in such a way that $\log q/\log p\to \alpha<3$. Does
the final density then go to $0$?
\item Consider the modified model on $\Z^d$ with
threshold $\thr=d$,
and $q=p^d(\log p^{-1})^{-\gamma}$. For which
$\gamma>0$ does the final density approach $0$ (resp.~1),
as $p\to 0$? The answer is already unknown in $d=2$ and
$d=3$, although \cite{GM} and Theorem~\ref{three-modified}
provide some information. For $d\ge 4$, not even the
power scaling $p^d$ is established. (See the open
problem (ii) in Section 6 of \cite{GH}.)
\item Consider the modified model on $\Z^3$ with
threshold $\thr=3$.
Let $T$ be the first time the origin is occupied and let
$\lambda=\pi^2/6$. When $q=0$, \cite{Hol2} proved that
\begin{equation*}\label{T-scale}
\P\Bigl[\exp\exp\left((\lambda-\epsilon)p^{-1}\right)
\le T \le \exp\exp\left((\lambda+\epsilon)p^{-1}\right)\Bigr]\to 1,
\end{equation*}
for any $\epsilon>0$.  Does this still hold if
we assume instead that $p,q\to 0$ in such a way that
$q<p^\gamma$ for some $\gamma>3$? (The lower bound on $T$
is immediate.)

\end{enumerate}

\section*{Acknowledgements}

JG was partially supported by the NSF grant DMS--1513340, Simons Foundation Award \#281309, and the Republic of Slovenia's Ministry of Science program P1--285. DS was partially supported by NSF grant DMS--1418265. JG and DS also gratefully acknowledge the hospitality of the Theory Group at Microsoft Research, where some of this work was completed.

%

\end{document}